\documentclass[12pt,letterpaper,reqno]{amsart}
\usepackage{amsfonts}
\usepackage{amsmath}
\usepackage{amsthm, amscd}
\usepackage{mathtools}
\usepackage{amssymb}
\usepackage{mathrsfs}
\usepackage{graphicx}
\usepackage{caption}
\usepackage{subcaption}
\usepackage{float}
\usepackage{xypic}
\usepackage[hidelinks]{hyperref}
\usepackage[abs]{overpic}
\usepackage[alphabetic]{amsrefs}
\usepackage{etex}
\usepackage{tikz-cd}

\theoremstyle{plain}\newtheorem{Theorem}{Theorem}[section]
\theoremstyle{plain}\newtheorem{Corollary}[Theorem]{Corollary}
\theoremstyle{plain}\newtheorem{Lemma}[Theorem]{Lemma}
\theoremstyle{plain}\newtheorem{Definition}[Theorem]{Definition}
\theoremstyle{plain}\newtheorem{Proposition}[Theorem]{Proposition}
\theoremstyle{plain}\newtheorem{Question}[Theorem]{Question}
\theoremstyle{plain}
\theoremstyle{plain}
\theoremstyle{plain}\newtheorem*{Claim*}{Claim}
\theoremstyle{plain}\newtheorem*{Theorem*}{Theorem}

\makeatletter
\newtheorem*{rep@theorem}{\rep@title}
\newcommand{\newreptheorem}[2]{%
\newenvironment{rep#1}[1]{%
 \def\rep@title{#2 \ref{##1}}%
 \begin{rep@theorem}}%
 {\end{rep@theorem}}}
\makeatother
\newreptheorem{Theorem}{Theorem}
\newreptheorem{Proposition}{Proposition}

\theoremstyle{remark}\newtheorem{remark}[Theorem]{Remark}
\theoremstyle{remark}
\theoremstyle{remark}\newtheorem*{Notation*}{Notation}

\DeclareMathOperator{\cl}{cl}
\DeclareMathOperator{\Emb}{Emb}
\DeclareMathOperator{\id}{id}
\DeclareMathOperator{\supp}{supp}
\DeclareMathOperator{\dist}{dist}
\DeclareMathOperator{\pt}{pt}
\DeclareMathOperator{\Ima}{Im}

\author{Dan Cristofaro-Gardiner}
\address{Department of Mathematics, The University of Maryland at College Park, Maryland 20742, USA}
\email{dcristof@umd.edu}
\author{Boyu Zhang}
\address{Department of Mathematics, The University of Maryland at College Park, Maryland 20742, USA}
\email{bzh@umd.edu}

\title[Topological symplectic and bi-Lipschitz structures]{Topological symplectic manifolds and bi-Lipschitz structures}

\begin{document}
\date{}
\maketitle
\begin{abstract}
We show that a topological symplectic manifold has a canonically associated bi-Lipschitz structure.  As a corollary, we obtain the first examples of non-existence and non-uniqueness for topological symplectic structures.  Our arguments hold for any topological manifold admitting an atlas with transition maps that are $C^0$--limits of bi-Lipschitz homeomorphisms.
\end{abstract}

\section{Introduction}

Eliashberg--Gromov have shown that symplectic maps possess the following remarkable rigidity property: a $C^0$--limit of symplectic diffeomorphisms which is a diffeomorphism is automatically symplectic.  Building on this, one can define a {\em symplectic homeomorphism} to be a homeomorphism which is locally the $C^0$--limit of symplectic diffeomorphisms, and
a {\em topological symplectic manifold} to be a topological manifold with an atlas whose charts have transition maps that are symplectic homeomorphisms; see e.g. the exposition in \cite[Ch.2]{vincent}.  

Though the notion of a topological symplectic manifold is a fundamental one that has attracted interest for decades\footnote{See for example the discussion in \cite[Sec. 2.7]{vincent}.}, not much is known about it. 
In fact, even the topological consequences of the definition have not been much studied.  This is the goal of the present work.  We define a {\em topological bi-Lipschitz manifold} to be a manifold with an atlas whose charts have transition functions that are locally $C^0$--limits of bi-Lipschitz homeomorphisms.  Then, if a manifold is topologically symplectic, it must be topologically bi-Lipschitz. Recall that a {\em bi-Lipschitz} structure on a topological manifold is an atlas whose transition maps are bi-Lipschitz homeomorphisms.
The main theorem of our paper is that every topological bi-Lipschitz $4$--manifold admits a canonical bi-Lipschitz structure, which implies a rather remarkable amount of geometric structures.

\begin{Theorem}
\label{thm_main_bilip}
Every topological bi-Lipschitz four-manifold has a canonically associated bi-Lipschitz structure. 
\end{Theorem}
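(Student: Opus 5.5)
The plan is to build the bi-Lipschitz structure chart by chart, using Sullivan's theory of Lipschitz structures, and to use the topological bi-Lipschitz atlas to make the choices canonical. The key input is Sullivan's theorem: in dimensions $\neq 4$ every topological manifold has a Lipschitz structure, unique up to isotopy near the identity. Dimension four is exactly where Sullivan's existence statement is unavailable. So the work is to show that the extra data — transition maps that are $C^0$--limits of bi-Lipschitz homeomorphisms — substitutes for the missing existence result in dimension four.

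\emph{Step 1 (local model).} Let $h\colon U\to V$ be a transition map between open subsets of $\mathbb{R}^4$, written locally as $h=\lim h_n$ in $C^0$ with each $h_n$ bi-Lipschitz. I will use the product trick: $h\times \id_{\mathbb{R}}\colon U\times\mathbb{R}\to V\times\mathbb{R}$ is a homeomorphism between open subsets of $\mathbb{R}^5$. By Sullivan's uniqueness theorem in dimension $5$ (more precisely its controlled, relative version), a homeomorphism $C^0$--close to a bi-Lipschitz one can be isotoped, through small isotopies, to a bi-Lipschitz homeomorphism. Taking $n$ large, $h\times\id$ is therefore isotopic, by an arbitrarily small isotopy, to a bi-Lipschitz map.

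\emph{Step 2 (descent to dimension 4).} Next I need a four-dimensional statement. The claim to prove is: a homeomorphism that is a $C^0$--limit of bi-Lipschitz homeomorphisms is itself \emph{locally} approximable by bi-Lipschitz maps agreeing with it on prescribed closed sets. Then, for each cocycle condition, choose approximations $h_n$ so close that Sullivan-type uniqueness (the Lipschitz version of the Kirby--Siebenmann handle straightening, which does hold in dimension $4$ for \emph{uniqueness} relative to a given Lipschitz structure on a collar) lets us modify charts inductively. Concretely, I would run the Kirby--Siebenmann chart-by-chart argument: order a countable locally finite atlas $\{\phi_i\}$, and at stage $k$ replace $\phi_k$ by $g_k\circ\phi_k$ with $g_k$ a homeomorphism $C^0$--close to the identity, so that the new transitions with $\phi_1,\dots,\phi_{k-1}$ are bi-Lipschitz on slightly shrunk domains. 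Producing $g_k$ requires straightening the map $\phi_j\circ\phi_k^{-1}$ — which is Lipschitz near a compact set already handled and a limit of bi-Lipschitz maps elsewhere — relative to that compact set. The Lipschitz handle-straightening (torus trick) works here precisely because we already have a bi-Lipschitz approximant to start from, so the usual $4$-dimensional obstruction never appears.

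\emph{Step 3 (canonicity).} The resulting structure depends on choices. To show it is canonical, I will apply the same relative argument to $M\times[0,1]$ with the two structures at the ends. Since $M\times[0,1]$ is $5$-dimensional, Sullivan's uniqueness applies and yields a Lipschitz isotopy (indeed a bi-Lipschitz homeomorphism $C^0$--close to the identity) between any two such structures. The same argument shows the construction is natural under topological bi-Lipschitz homeomorphisms.

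I expect the main obstacle to be Step 2: the relative straightening of a $C^0$--limit of bi-Lipschitz maps in dimension four. It has to be done without Sullivan's existence theorem, which uses surgery and fails there. My first attempt will be the torus trick, lifting to a finite cover of $T^4$ minus a point where the bi-Lipschitz approximant gives a Lipschitz structure, and then using Alexander-trick-type coning, which is available in the Lipschitz category.
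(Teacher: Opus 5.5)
\textbf{Gap 1: the induction in Step 2 does not close.} The core move in your Step 2 is right, and it is the paper's. You approximate the transition $f$ on a set $A$ by a $\mathcal{C}$ embedding $g$, so that $g^{-1}\circ f$ is $C^0$--close to the identity and is $\mathcal{C}$ wherever $f$ already is. You then straighten it relative to that region with Sullivan's local contractibility (Theorem \ref{thm_contract_C_embedding}), which holds in dimension $4$. That theorem rests on replacing $T^n$ by closed almost-parallelizable hyperbolic manifolds, whose universal cover is conformally a ball. The $T^4$ version you plan to try first does not preserve Lipschitz control. Still, this input is off the shelf and is not the obstacle.

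The gap is the induction you wrap around the move. At stage $k$ you need a $C^0$--approximation by $\mathcal{C}$ embeddings of the \emph{current} transition on the whole set being straightened. That transition has already been altered: it is $g_j\circ\phi_j\circ\phi_k^{-1}$, with $g_j$ produced by earlier straightenings. You assert it is ``a limit of bi-Lipschitz maps elsewhere'', but this is exactly what is not known, for two reasons.
\begin{enumerate}
\item $\overline{\mathcal{C}}$ is a local condition, and local approximants on the sets of a cover do not glue to an approximant on their union.
\item The straightening homeomorphisms come from Sullivan's extension-after-restriction, which is non-local: a canonical Schoenflies extension over a ball, a lift to the universal cover of $Q^m$, and a conjugation onto a ball. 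One can only certify its output as $\overline{\mathcal{C}}$ when the input is a $C^0$--limit of $\mathcal{C}$ embeddings on the entire handle.
\end{enumerate}
So after even one straightening you do not know that $g_j$, or the new transitions with untreated charts, are $\overline{\mathcal{C}}$, and the induction stalls.

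This is where the paper's real work lies. It checks that Sullivan's map $\psi$ is $C^0$--continuous and $\mathcal{C}$--preserving, so it sends globally $\overline{\mathcal{C}}$ inputs to globally $\overline{\mathcal{C}}$ outputs, with Lemma \ref{lem_C_approx_id} keeping the approximants fixed near the boundary. It introduces universal $\overline{\mathcal{C}}$ isotopies, which survive concatenation, extension by the identity, and globally $\overline{\mathcal{C}}$ coordinate changes. It then uses these to prove Theorem \ref{thm_local_Cbar_to_global_Cbar}: locally $\overline{\mathcal{C}}$ implies globally $\overline{\mathcal{C}}$ on relatively compact subsets. This is what yields item (3) of Proposition \ref{prop_prop_C_smoothing_Cbar_embeddings}, on which the chart-by-chart argument depends.

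\textbf{Gap 2: Step 3 fails as written, and Step 1 is a dead end.} Sullivan's theory on the $5$--manifold $M\times[0,1]$ gives at best a Lipschitz concordance between the two structures, or a Lipschitz homeomorphism of $M\times(0,1)$ that does not preserve levels. Neither restricts to a bi-Lipschitz homeomorphism of $M$, and there is no concordance-implies-isotopy principle for $4$--manifolds. The five-dimensional theorem never sees the $\overline{\mathcal{C}}$ hypothesis. So if this argument worked, any two Lipschitz structures on a topological $4$--manifold would be equivalent. That contradicts the Donaldson--Sullivan example quoted in the introduction, $\mathbb{C}P^2\#8\overline{\mathbb{C}P^2}$ versus the Barlow surface.

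Uniqueness has to be proved in dimension $4$. The identity of $M$ is a $\overline{\mathcal{C}}$ homeomorphism between the two $\mathcal{C}$ structures, since both are compatible with the given atlas. The same relative smoothing, applied chart by chart, isotopes it through $\overline{\mathcal{C}}$ homeomorphisms to a $\mathcal{C}$ homeomorphism (Theorem \ref{thm_uniqueness_of_C_structure}); this is also the precise meaning of ``canonical''. Step 1 fails for the same reason: it holds for every homeomorphism, uses no hypothesis, and nothing descends from $h\times\id_{\mathbb{R}}$ back to $h$.
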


We defer the precise statement of what is meant by ``canonical" in this context to Theorem \ref{thm_uniqueness_of_C_structure}.

Bi-Lipschitz manifolds have a number of desirable properties.  For example, they have a notion of differential forms, and a de Rham complex computing the cohomology of the manifold.  Our main interest here, though, is in the implications for the existence and uniqueness of topological symplectic structures.  We start with the following, which is the first known obstruction to the existence of topological symplectic structures on even-dimensional manifolds:

\begin{Corollary}
There exist closed topological $4$--manifolds that do not admit any topological symplectic structure. 
\end{Corollary}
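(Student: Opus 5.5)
The plan is to combine Theorem \ref{thm_main_bilip} with the classical theory of Lipschitz structures on topological manifolds. The chain of implications runs as follows. A topological symplectic structure is in particular a topological bi-Lipschitz structure, as observed in the introduction. By Theorem \ref{thm_main_bilip}, such a structure yields a genuine bi-Lipschitz (Lipschitz) structure on the underlying $4$--manifold. It therefore suffices to exhibit a closed topological $4$--manifold that admits no Lipschitz structure at all.

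The first step is to make precise the claim that symplectic homeomorphisms are locally $C^0$--limits of bi-Lipschitz homeomorphisms. Symplectic diffeomorphisms are smooth, hence locally bi-Lipschitz on compact subsets. So a local $C^0$--limit of symplectic diffeomorphisms is a local $C^0$--limit of bi-Lipschitz homeomorphisms, and the symplectic atlas is therefore a topological bi-Lipschitz atlas.

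The second step is to find a closed topological $4$--manifold with no Lipschitz structure. I would use the result of Donaldson--Sullivan, \emph{Quasiconformal 4-manifolds}, which extends Donaldson's diagonalizability theorem to quasiconformal, and in particular Lipschitz, $4$--manifolds. Their theorem states that a closed, simply connected Lipschitz $4$--manifold with definite intersection form has a diagonalizable form. Freedman's classification provides a closed simply connected topological $4$--manifold with intersection form $E_8\oplus E_8$, which is definite and not diagonalizable. This manifold therefore carries no Lipschitz structure, and hence no topological symplectic structure.

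One could alternatively cite the $E_8$ manifold. It is non-smoothable, but that fact alone is not enough. One has to check that the Donaldson--Sullivan obstruction, which is gauge theory on Lipschitz/quasiconformal manifolds, covers it. It does: $E_8$ is definite, even and nonzero, so it is non-diagonalizable.

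The main obstacle is entirely in Theorem \ref{thm_main_bilip}, which I am assuming. Given that theorem, the only point needing care is that Donaldson--Sullivan applies to the bi-Lipschitz structure the theorem produces. That structure is a Lipschitz atlas in the usual sense, which is exactly the hypothesis of their work, so the argument should close. A compatible orientation is not even required, since definiteness of the form does not depend on the sign convention.
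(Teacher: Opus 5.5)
Your argument is the paper's: a topological symplectic atlas is a topological bi-Lipschitz atlas, Theorem~\ref{thm_main_bilip} upgrades it to a bi-Lipschitz structure, and Donaldson--Sullivan's extension of gauge theory to Lipschitz $4$--manifolds then gives the obstruction. One correction: the paper uses Donaldson--Sullivan only in the form \cite[Section 7(ii)]{sullivan1989quasiconformal}, namely that a simply connected topological $4$--manifold with even, negative definite intersection form representing $-2$ admits no bi-Lipschitz structure; your paraphrase as a full diagonalizability theorem overstates what is cited there, but your examples $E_8$ and $E_8\oplus E_8$ (with orientation reversed to make the form negative definite, which does not affect bi-Lipschitz structures) satisfy exactly these hypotheses, so the proof goes through.
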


The corollary follows, because it was shown in \cite{sullivan1989quasiconformal} that Donaldson theory extends to bi-Lipschitz and quasi-conformal $4$--manifolds, and so can be used to obstruct the existence of such structures on topological $4$-manifolds.
More specifically, by \cite[Section 7(ii)]{sullivan1989quasiconformal}, every simply connected topological $4$--manifold with an even, negative definite, intersection form that represents $(-2)$ cannot admit bi-Lipschitz or quasi-conformal structures. Hence by Theorem \ref{thm_main_bilip}, such manifolds cannot admit topological symplectic structures. 

We have a similar non-uniqueness statement.

\begin{Corollary}
There exist two different topological symplectic manifolds that are homeomorphic but not symplectically homeomorphic.
\end{Corollary}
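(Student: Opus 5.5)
We need to produce two closed topological $4$--manifolds that are homeomorphic, each carrying a topological symplectic structure, such that no homeomorphism between them is symplectic. The plan is to take two smooth symplectic $4$--manifolds $(X_1,\omega_1)$ and $(X_2,\omega_2)$ that are homeomorphic but not diffeomorphic, and to distinguish them with a smooth invariant that extends to the bi-Lipschitz (or quasi-conformal) category. A smooth symplectic manifold is in particular a topological symplectic manifold: its Darboux charts have symplectic transition maps, and these are trivially $C^0$--limits of themselves. Suppose there were a symplectic homeomorphism $\phi\colon X_1\to X_2$. In the Darboux charts it is locally a $C^0$--limit of symplectic diffeomorphisms. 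Hence the pulled-back atlas on $X_1$, consisting of the compositions of Darboux charts of $X_2$ with $\phi$, is compatible with the original topological symplectic atlas of $X_1$. The canonicity statement behind Theorem \ref{thm_main_bilip}, namely Theorem \ref{thm_uniqueness_of_C_structure}, should then say that $\phi$ carries the canonical bi-Lipschitz structure of $X_1$ to that of $X_2$, up to the equivalence in that theorem (bi-Lipschitz isotopy, or at least bi-Lipschitz homeomorphism).

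The next step is to identify the canonical bi-Lipschitz structure when the manifold is smooth. I expect the canonical structure of Theorem \ref{thm_main_bilip} to restrict to the usual Lipschitz structure induced by the smooth atlas when the topological symplectic atlas comes from a smooth one. The reason is that smooth charts already form a bi-Lipschitz atlas compatible with the given atlas, and canonicity should force agreement. With this identification, $\phi$ would be a bi-Lipschitz homeomorphism between the Lipschitz structures underlying the smooth structures of $X_1$ and $X_2$.

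It then remains to pick $X_1,X_2$ that are smoothly symplectic, homeomorphic, and distinguished by Donaldson invariants. By Sullivan and Donaldson--Sullivan \cite{sullivan1989quasiconformal}, Donaldson invariants are defined for quasi-conformal (hence Lipschitz) $4$--manifolds and are invariant under bi-Lipschitz homeomorphisms. A standard choice is the following:
\begin{itemize}
\item $X_1$ is a simply connected minimal elliptic surface such as the Dolgachev surface, or another simply connected Kähler surface of general type;
\item $X_2$ is the rational surface $\mathbb{CP}^2\#9\overline{\mathbb{CP}}^2$, or the corresponding blowup with the same intersection form.
\end{itemize}
Both are Kähler, hence symplectic, and Freedman's theorem makes them homeomorphic. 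Donaldson's original computation distinguishes them by showing that the relevant polynomial invariant vanishes for the rational surface (which has positive scalar curvature metrics and splits off $\overline{\mathbb{CP}}^2$ appropriately) but not for the Dolgachev surface. One could instead use $K3$ versus a homotopy $K3$, or a fake $\mathbb{CP}^2\#2\overline{\mathbb{CP}}^2$ with $b^+=1$ chamber subtleties, but a $b^+>1$ example such as an elliptic surface $E(n)_{p,q}$ versus $E(n)$ avoids chamber issues. So I would take $X_1=E(2)$ and $X_2$ a logarithmic transform $E(2)_{2,3}$, which is Kähler with different Donaldson invariants. Sullivan's invariance theorem then gives a contradiction, so no symplectic homeomorphism exists.

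The main obstacle is the second step: showing that the canonical bi-Lipschitz structure of a smooth symplectic manifold agrees with its smooth Lipschitz structure, and that a symplectic homeomorphism is bi-Lipschitz with respect to these structures up to the equivalence allowed by Theorem \ref{thm_uniqueness_of_C_structure}. If that theorem only gives uniqueness up to isotopy, or up to a homeomorphism isotopic to the identity, this is enough, since Donaldson invariants are invariant under any bi-Lipschitz homeomorphism. I would first try to read this off directly from the uniqueness statement, checking that a smooth atlas lies in the admissible class.
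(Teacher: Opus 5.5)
Your architecture matches the paper's. You regard smooth symplectic manifolds as topological symplectic ones, use Theorem \ref{thm_uniqueness_of_C_structure} to turn a symplectic homeomorphism into a bi-Lipschitz one, and let Donaldson--Sullivan supply the obstruction.

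The step you flag as the main obstacle is in fact immediate and needs no comparison of ``canonical'' structures. Equip $X_1,X_2$ with the bi-Lipschitz atlases given by their Darboux charts. In these charts a symplectic homeomorphism $\phi$ is locally a $C^0$--limit of symplectic diffeomorphisms, which are smooth and hence locally bi-Lipschitz. So $\phi$ is a $\overline{\mathcal{C}}$ homeomorphism between $\mathcal{C}$ manifolds. Theorem \ref{thm_uniqueness_of_C_structure} then isotopes it to a locally bi-Lipschitz homeomorphism, which is bi-Lipschitz by compactness.

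The genuine gap is the final step. You rely on the principle that Donaldson's polynomial invariants are defined for quasi-conformal or Lipschitz $4$--manifolds and are invariant under bi-Lipschitz homeomorphisms. That is not what \cite{sullivan1989quasiconformal} gives you. Donaldson--Sullivan develop the moduli space theory in the quasi-conformal category and draw specific consequences from it. The homeomorphic pair they actually show to be not quasi-conformally (hence not bi-Lipschitz) equivalent, in Section 7(iii), is $\mathbb{C}P^2\#8\overline{\mathbb{C}P^2}$ versus the Barlow surface. For that pair, to my recollection, the distinguishing invariant comes from zero-dimensional moduli spaces, so no $\mu$--classes enter.

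Your pairs need more than that. Donaldson's argument for the Dolgachev surface evaluates a $\mu$--class on a two-dimensional compactified moduli space, with $b^+=1$ chamber analysis. The $b^+>1$ polynomial invariants likewise need the $\mu$--map. Their invariance in the Lipschitz category is not available from the cited work, so ``Sullivan's invariance theorem then gives a contradiction'' is unsupported as written.

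Separately, your preferred pair is not even homeomorphic. $E(n)_{p,q}$ is spin only when $n$ is even and $p,q$ are both odd. With $p=2$, $E(2)_{2,3}$ has odd intersection form $3\langle 1\rangle\oplus 19\langle -1\rangle$, while $E(2)=K3$ is spin.

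The repair is to take $X_1=\mathbb{C}P^2\#8\overline{\mathbb{C}P^2}$ and $X_2$ the Barlow surface. Both are simply connected K\"ahler (hence symplectic) surfaces with intersection form $\langle 1\rangle\oplus 8\langle -1\rangle$, so they are homeomorphic by Freedman. With this choice your argument becomes exactly the paper's.
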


Here, a homeomorphism between two topological symplectic manifolds is called a \emph{symplectic homeomorphism} if it takes every topological symplectic chart to a topological symplectic chart (see also \cite[Section 2.2]{vincent}). 
The corollary again follows via the work of Donaldson--Sullivan.  By Theorem \ref{thm_main_bilip} (more precisely, Theorem \ref{thm_uniqueness_of_C_structure}), every symplectic homeomorphism induces a bi-Lipschitz equivalence.  Donaldson--Sullivan \cite[Section 7(iii)]{sullivan1989quasiconformal} showed that $\mathbb{C}P^2 \# 8 \overline{\mathbb{C}P^2}$ and the ``Barlow surface" are not bi-Lipschitz equivalent, though they are homeomorphic; both of these support symplectic structures.

We note that our methods also give the following result.  
We say that a manifold is a \emph{topological quasi-conformal manifold} if it admits a chart whose transition functions are $C^0$--limits of quasi-conformal homeomorphisms.

\begin{Theorem}
\label{thm_main_qc}
Every topological quasi-conformal four-manifold admits a canonical quasi-conformal structure. 
\end{Theorem}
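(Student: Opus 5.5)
The plan is to run the same argument that proves Theorem \ref{thm_main_bilip} (in the form of Theorem \ref{thm_uniqueness_of_C_structure}), with the pseudogroup of locally bi-Lipschitz homeomorphisms replaced everywhere by the pseudogroup $\mathcal{QC}$ of locally quasi-conformal homeomorphisms. So the proof should amount to checking that $\mathcal{QC}$ has every property of the bi-Lipschitz pseudogroup that the argument actually uses. I would first isolate that list abstractly, for a pseudogroup $\mathcal{C}$ of homeomorphisms between open subsets of $\mathbb{R}^n$:
\begin{enumerate}
\item $\mathcal{C}$ is closed under composition, inverses, restriction to open subsets and gluing. It contains all smooth diffeomorphisms and is preserved under products with $\id_{\mathbb{R}}$.
\item Sullivan-type existence and uniqueness holds in dimension $\ge 5$. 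Every topological manifold of dimension $\ge 5$ has a $\mathcal{C}$-structure. Any homeomorphism between $\mathcal{C}$-manifolds of dimension $\ge 5$ that is $C^0$-close to a $\mathcal{C}$-map is isotopic, by a small isotopy and relative to closed sets where it is already in $\mathcal{C}$, to a $\mathcal{C}$-map.
\item Local contractibility holds. A homeomorphism $C^0$-close to the identity on a ball is isotopic to the identity by a small isotopy (Černavskiĭ, Edwards--Kirby).
\end{enumerate}

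For $\mathcal{QC}$, property (1) is standard: quasi-conformality is local, and products with $\id_{\mathbb{R}}$ of $K$-quasi-conformal maps are $K'$-quasi-conformal. Property (2) is the Tukia--Väisälä theorem that in dimensions $\neq 4$ every topological manifold has a quasi-conformal (indeed Lipschitz) structure, unique up to small isotopy. Their relative, controlled version is exactly what Sullivan's bi-Lipschitz version supplies in the bi-Lipschitz proof. Property (3) does not involve $\mathcal{C}$ at all.

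Given these, the construction mirrors the bi-Lipschitz case. Take a topological quasi-conformal atlas $\{\varphi_i\}$ on $M^4$, whose transition maps $\tau_{ij}$ are locally $C^0$-limits of quasi-conformal maps $f_{ij}^{(n)}$. Stabilize to the $5$-manifold $M\times\mathbb{R}$. Each $\tau_{ij}\times\id$ is then a $C^0$-limit of the quasi-conformal maps $f_{ij}^{(n)}\times\id$. Write
\[
\tau_{ij}\times \id=\bigl((\tau_{ij}\circ (f_{ij}^{(n)})^{-1})\times\id\bigr)\circ\bigl(f_{ij}^{(n)}\times\id\bigr).
\]
The first factor is $C^0$-close to the identity. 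Using (2) in dimension $5$, I would straighten these maps chart by chart, inductively over a locally finite cover. This produces a quasi-conformal structure on $M\times\mathbb{R}$ compatible, up to small isotopy, with the products of the given charts.

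The remaining step is to descend from $M\times\mathbb{R}$ back to $M$. In the bi-Lipschitz proof this is where the approximation hypothesis is used in place of the failing $4$-dimensional product structure theorem. I expect to reuse that step verbatim. The needed inputs are only that compositions of the approximating maps with small corrections stay in $\mathcal{C}$, and that the isotopies from (2) and (3) can be chosen compatibly with the $\mathbb{R}$-factor, which is essentially a torus-trick argument done in dimension $5$.

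Canonicity should then follow exactly as in Theorem \ref{thm_uniqueness_of_C_structure}. Two choices of approximating sequences, or two atlases related by topological quasi-conformal homeomorphisms, give structures related by a map that is a $C^0$-limit of quasi-conformal maps after stabilization. The uniqueness half of (2) then yields a quasi-conformal equivalence.

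The main obstacle I anticipate is the uniqueness and relative part of (2) for $\mathcal{QC}$. One difficulty is that $C^0$-limits of quasi-conformal maps with unbounded dilatation need not be quasi-conformal, so no compactness argument is available. Everything must instead come from the Tukia--Väisälä straightening, and I must confirm that their theorem comes with the same control (smallness of the isotopy and relativity to closed sets) that Sullivan's theorem gives. If it does not directly, I would deduce it from the Lipschitz version: Lipschitz structures are quasi-conformal, and Tukia--Väisälä show that quasi-conformal structures in dimension $\neq 4$ come from Lipschitz ones.
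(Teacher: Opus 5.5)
There is a genuine gap, and it lies in the step you plan to ``reuse verbatim.'' The paper's proof of Theorem~\ref{thm_main_bilip} never passes to $M\times\mathbb{R}$, so there is no descent step to borrow. Worse, stabilizing throws the hypothesis away instead of using it. In dimension $5$ every open embedding is already $\overline{\mathcal{C}}$ (Remark~\ref{rmk_n=4}), and by Sullivan every $5$--manifold has a Lipschitz, hence quasi-conformal, structure. So everything you do on $M\times\mathbb{R}$ works equally well for the Donaldson--Sullivan $4$--manifolds that admit no quasi-conformal structure at all. That includes making the structure compatible, up to small isotopy, with the product charts. The whole theorem would therefore have to live in the descent from $M\times\mathbb{R}$ to $M$, which you do not supply, and there is no $4$--dimensional product structure theorem to appeal to. If ``compatible with the $\mathbb{R}$--factor'' means isotopies of the form $(\cdot)\times\id_{\mathbb{R}}$, then you are really running a $4$--dimensional argument, and input (2) plays no role. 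The same objection kills your canonicity step: uniqueness in dimension $5$ only gives an equivalence of the stabilized structures. For example, $\mathbb{C}P^2\#8\overline{\mathbb{C}P^2}$ and the Barlow surface become Lipschitz equivalent after crossing with $\mathbb{R}$, although they are not Lipschitz equivalent themselves. Also, your property (1) is false for quasi-conformal maps. For a radial stretch $f(x)=|x|^{a-1}x$ with $a\neq 1$, the map $f\times\id_{\mathbb{R}}$ is not even locally quasi-conformal near $\{0\}\times\mathbb{R}$.

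The paper works in dimension $4$ throughout, and the two ideas you are missing are as follows.
\begin{enumerate}
\item \emph{Local straightening in dimension $4$.} The paper uses your factorization without the $\times\id$: locally write $f=g\circ(g^{-1}\circ f)$ with $g$ a $\mathcal{C}$ approximation of $f$, and straighten the near-identity factor $g^{-1}\circ f$ with Sullivan's local contractibility, Theorem~\ref{thm_contract_C_embedding}. This holds for $n=4$ because the hyperbolic torus trick works in every dimension. What fails in dimension $4$ is only $C^0$--approximability of homeomorphisms by $\mathcal{C}$ ones, and that is exactly what the hypothesis provides. Your input (3) is the wrong tool here. The chart-by-chart induction needs the contracting isotopy to stay $\mathcal{C}$ where earlier transition maps are already $\mathcal{C}$ (item (4) of Theorem~\ref{thm_contract_C_embedding}), and a merely topological contraction does not give this.
\item \emph{Local versus global approximation.} This is the difficulty the paper is mostly about, and you do not address it. After one modification, the new transition maps must still be $C^0$--limits of $\mathcal{C}$ embeddings on whole compact subdomains, and local approximants need not glue. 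The paper resolves this with universal $\overline{\mathcal{C}}$ isotopies (Definition~\ref{def_universal_Cbar_isotopy}), Proposition~\ref{prop_match_global_contractibility}, and Theorem~\ref{thm_local_Cbar_to_global_Cbar}.
\end{enumerate}

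Finally, the obstacle you anticipate, a controlled relative Tukia--V\"ais\"al\"a theorem in dimension $\ge 5$, is not where the difficulty lies. The quasi-conformal case is literally the same proof as the bi-Lipschitz one. The only quasi-conformal-specific inputs are Lemma~\ref{lem_inverse_C}, the canonical Schoenflies theorem (Theorem~\ref{thm_canonical_schoenflies}), and the removability theorem used for Proposition~\ref{prop_coordinate_change_Bk_times_Qm}(3).
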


Our proofs extend Sullivan's arguments in \cite{sullivan1979hyperbolic}. Sullivan generalized Kirby's torus trick to the bi-Lipschitz and quasi-conformal categories by replacing the torus with closed, almost parallelizable hyperbolic manifolds, and proved that every topological manifold of dimension $\neq 4$ admits a bi-Lipschitz structure. The dimension restriction is needed because homeomorphisms of $\mathbb{R}^n$ ($n \neq 4$) can be $C^0$--approximated by PL homeomorphisms, and this property fails in dimension $4$. Nevertheless, if a $4$--manifold admits a chart with transition functions that are $C^0$--limits of bi-Lipschitz or quasi-conformal homeomorphisms, we will show that Sullivan's argument extends to such manifolds.

The main difficulty is that being a $C^0$--limit of bi-Lipschitz (resp. quasi-conformal) homeomorphisms is not clearly a local condition. More precisely, if $f \colon U \to \mathbb{R}^4$ is an open embedding and there is an open cover $\{U_\alpha\}$ of $U$ such that $f$ is a $C^0$--limit of bi-Lipschitz (resp.\ quasi-conformal) embeddings on each $U_\alpha$, it does not follow by definition that $f$ is a $C^0$--limit of bi-Lipschitz (resp.\ quasi-conformal) embeddings of $U$. This creates an essential difficulty because Sullivan's smoothing argument makes use of a sequence of \emph{local} modifications for every open embedding $f:U\to \mathbb{R}^n$, and a straightforward extension of the argument would require that $f$ be a $C^0$--limit of bi-Lipschitz (resp. quasi-conformal) embeddings of $U$ after each step. We resolve this in Section \ref{sec_local_Cbar_to_global_Cbar}, where we prove in Theorem \ref{thm_local_Cbar_to_global_Cbar} that the existence of local approximations implies global approximations on compact subsets. 
This is facilitated by
introducing a class of isotopies, which will be called \emph{universal $\overline{\mathcal{C}}$ isotopies} (Definition \ref{def_universal_Cbar_isotopy}), that preserve  global $\overline{\mathcal{C}}$ properties on every subdomain. We show that the universal $\overline{\mathcal{C}}$ condition can be verified locally and is invariant under coordinate changes, and that the smoothing argument can be executed in a way where every step is given by a universal $\overline{\mathcal{C}}$ isotopy. 

The paper is organized as follows.
Section \ref{sec_preliminary} introduces the basic notation and establishes several elementary lemmas. Section \ref{sec_first_consideration} shows that, by a standard argument, the main results can be deduced from a smoothing theorem for embeddings of open subsets in $\mathbb{R}^4$. Section \ref{sec_local_Cbar_to_global_Cbar} addresses the main technical difficulty: we show that if an open embedding can be locally $C^0$ approximated by bi-Lipschitz (resp. quasi-conformal) embeddings, then it can be $C^0$ approximated by bi-Lipschitz (resp. quasi-conformal) embeddings on every compact subset. We then use this to verify the desired smoothing result, which finishes the proof of the main theorems. In Section \ref{sec_appendix}, we provide the details for the proof of an ``extension after restriction'' result (Theorem \ref{thm_match_handle_in_C}) that was used in Section \ref{sec_local_Cbar_to_global_Cbar}; while the relevant arguments are known, we found it difficult to locate direct references for the specific formulations we need, so we include the proofs for the convenience of the reader and to make the paper more self-contained.

\begin{remark}
Other definitions of symplectic homeomorphism have been considered, and these give alternate definitions of a topological symplectic manifold;  it is not at present clear whether or not these are equivalent.   The most common alternate definition is that one sometimes defines a symplectic homeomorphism to be a (global) $C^0$ limit of symplectic diffeomorphisms; this has the advantage of tending to preserve symplectic capacities.  This leads to an equivalent definition to topological symplectic manifolds, and hence our arguments work just as well with this definition.  There are also {\em capacity preserving homeomorphisms} \cite{vincent}, i.e. homeomorphisms which preserve a normalized symplectic capacity; our arguments do not directly imply that a manifold with transition maps that are capacity preserving homeomorphisms is bi-Lipschitz.  For more about these notations, see e.g. \cite{vincent}.
\end{remark}

\subsection{Future directions}

Recall that a bi-Lipschitz homeomorphism between open subsets of $\mathbb{R}^{2n}$ is called {\em symplectic} if it pulls back the standard symplectic form to itself almost everywhere.  A {\em symplectic bi-Lipschitz} structure is an atlas with transition maps that are symplectic bi-Lipschitz homeomorphisms. It seems crucial to understand the following potential refinement of Theorem~\ref{thm_main_bilip}.

\begin{Question}
\label{que:sympbilip}
    Can every topological symplectic structure be canonically refined to a symplectic bi-Lipschitz one?
\end{Question}

If this question has an affirmative answer, it would imply, for example, via \cite{dusan} that spheres other than $S^2$ never support a topological symplectic structure, resolving a longstanding question of Hofer discussed in e.g. \cite[Ch. 2.7]{vincent}, \cite[Problem 41]{mcduffsalamon}.  However, it is of more general interest.  For example, it was speculated to us by D. Sullivan that a symplectic bi-Lipschitz structure could allow for the theory of pseudoholomorphic curves to extend.  Inspired by this, we ask the following question, which we have purposely left somewhat open-ended:

\begin{Question}
Does the theory of pseudoholomorphic curves extend to topological symplectic manifolds?
\end{Question}

One hopes in other words to find a canonical refinement to a symplectic bi-Lipschitz structure via Question~\ref{que:sympbilip} and then explore Sullivan's suggestion.

\subsection*{Acknowledgments.}
A huge debt is owed to Dennis Sullivan 
for suggesting that results like the ones we establish should hold
and for proposing the general strategy.
We are also indebted to Dennis for sharing inspiring speculations,
and for many helpful conversations and correspondence.  
We thank Vincent Humili\`ere and Seyfaddini Sobhan for their interest in this work and helpful comments.

DCG thanks the NSF for their support under agreements DMS-2227372 and DMS-2238091.
BZ thanks the NSF for their support under agreement DMS-2405271 and the
Simons Foundation for a travel grant.

\section{Preliminaries}
\label{sec_preliminary}
Suppose $n$ is an integer and $n\ge 2$. Later, we will focus on the case that $n=4$.
Let $B^n$ denote the closed unit ball in $\mathbb{R}^n$. For $a>0$, let $B^n(a)$ denote the closed ball in $\mathbb{R}^n$ centered at the origin with radius $a$. If $n=0$, our convention is $B^n(a) = B^n =\{0\}$ for all $a>0$. If $a<b$, let $B^n((a,b))$ denote $\{x\in\mathbb{R}^n\mid a<|x|<b\}$, let $B^n([a,b])$ denote $\{x\in\mathbb{R}^n\mid a\le |x| \le b\}$, and define $B^n((a,b])$, $B^n([a,b))$ analogously. 

If $A\subset \mathbb{R}^n$, let $\cl(A)$ denote the closure of $A$, let $\mathring{A}$ denote the interior of $A$. 
If $A$ is a topological manifold with boundary, we use $\partial A$ to denote its boundary manifold. By definition, $\partial B^0=\emptyset$. 

If $V\subset U$ are open subsets of $\mathbb{R}^n$, we write $V\Subset U$ if the closure of $V$ is a compact subset of $U$.

Let $I=[0,1]$.
For $A\subset \mathbb{R}^n$, the \emph{support} of an embedding $f:A\to \mathbb{R}^n$ is the closure of $\{x\in A\mid f(x) \neq x\}$ in $\mathbb{R}^n$. The \emph{support} of a homotopy $F:A\times I\to \mathbb{R}^n$ is the closure of the set 
\[
\{x\in A\mid \exists\, t_1<t_2 \,\,\textrm{such that}\,\, f(x,t_1) \neq f(x,t_2)\}.
\]
A homotopy $F:A\times I\to \mathbb{R}^n$ is called an \emph{isotopy} if $F(-,t)$ is an embedding of $A$ for each $t\in [0,1]$. 
For $\epsilon>0$, a homotopy (resp. isotopy) $F$ is called an \emph{$\epsilon$--homotopy} (resp. \emph{$\epsilon$--isotopy}), if $\sup_{x,t_1,t_2} |F(x,t_1)-F(x,t_2)| < \epsilon$. If $U\subset \mathbb{R}^n$ is an open subset, we say that an embedding, homotopy, or isotopy is \emph{compactly supported in $U$}, if the support is a compact subset of $U$. 

An open embedding $f:U\to \mathbb{R}$ is called \emph{bi-Lipschitz}, if there exists $K\ge 1$ such that 
\[
\frac{1}{K}|x-y|\le |f(x)-f(y)| \le K|x-y|
\]
for all $x,y\in U$. It is called \emph{quasi-conformal}, if there exists $K\ge 1$ such that 
\[
\limsup_{r\to 0} \frac{\max_{|y-x|=r} |f(x) - f(y)|}{\min_{|y-x|=r} |f(x) - f(y)|}\le K
\]
for all $x\in U$.

An open embedding $f:U\to \mathbb{R}^n$ is called \emph{locally bi-Lipschitz} (resp. \emph{locally quasi-conformal}), if there is an open cover $\{U_\alpha\}$ of $U$ such that $f$ is bi-Lipschitz (resp. quasi-conformal) on each $U_\alpha$. Note that the corresponding constant $K$ may depend on $\alpha$. 

Lemmas \ref{lem_local_C_implies_global_C}, \ref{lem_inverse_C} below are well-known properties of bi-Lipschitz and quasi-conformal embeddings.

\begin{Lemma}
\label{lem_local_C_implies_global_C}
Suppose $V\Subset U$ are open subsets of $\mathbb{R}^n$. 
\begin{enumerate}
    \item If an embedding $f:U\to \mathbb{R}^n$ is locally quasi-conformal, then $f$ is quasi-conformal on $V$.
    \item If an embedding $f:U\to \mathbb{R}^n$ is locally bi-Lipschitz, then $f$ is bi-Lipschitz on $V$.
\end{enumerate}
\end{Lemma}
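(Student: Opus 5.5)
The plan is to prove the two parts separately. Each uses compactness of $\cl(V)$ together with the fact that $f$ is an embedding.

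For (1), the definition of quasi-conformality given above is pointwise: the $\limsup$ condition at each $x$ with a uniform constant $K$. If $f$ is quasi-conformal on $U_\alpha$ with constant $K_\alpha$, then for $x\in U_\alpha$ the $\limsup$ quantity at $x$ only involves spheres of small radius, which lie inside $U_\alpha$. It is therefore bounded by $K_\alpha$ when computed for $f$ on $U$. I would cover the compact set $\cl(V)$ by finitely many $U_{\alpha_1},\dots,U_{\alpha_m}$ and set $K=\max_i K_{\alpha_i}$. Every $x\in V$ lies in some $U_{\alpha_i}$, so the $\limsup$ at $x$ is at most $K$, and $f|_V$ is quasi-conformal with constant $K$.

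For (2), the upper and lower Lipschitz bounds must hold for all pairs $x,y\in V$, including far-apart pairs, so a Lebesgue number argument is needed.
\begin{enumerate}
\item Cover $\cl(V)$ by finitely many $U_{\alpha_i}$ on which $f$ is $K_i$--bi-Lipschitz, and let $\delta>0$ be a Lebesgue number for this cover of the compact metric space $\cl(V)$.
\item For $x,y\in V$ with $|x-y|<\delta$, both points lie in a common $U_{\alpha_i}$, so the bounds hold with $K_0=\max_i K_i$.
\item For $|x-y|\ge\delta$, use that $f$ is continuous on the compact set $\cl(V)$. Then $|f(x)-f(y)|\le \operatorname{diam} f(\cl(V))\le (\operatorname{diam} f(\cl V)/\delta)\,|x-y|$, which gives the upper bound.
\item For the lower bound in the same case, use injectivity of $f$. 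The set $\{(x,y)\in \cl(V)^2: |x-y|\ge\delta\}$ is compact, and $|f(x)-f(y)|$ is continuous and positive on it. So it has a positive minimum $m$, and $|f(x)-f(y)|\ge m\ge (m/\operatorname{diam} \cl(V))\,|x-y|$.
\item Take $K$ to be the largest of the resulting constants.
\end{enumerate}

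The main (mild) obstacle is step (4), the global lower bound for distant pairs. It is exactly where injectivity of $f$ on all of $U$, not just on each $U_\alpha$, and compactness of $\cl(V)\subset U$ are essential. A minor point for (1) is to check that the $\limsup$ in the definition is genuinely local, which holds because small spheres around $x$ eventually lie in any neighborhood of $x$.
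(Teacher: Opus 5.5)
Your proof is correct and follows the paper's route. Part (1) is read off from the pointwise definition using finitely many constants. For the upper bound in part (2), the paper likewise combines a uniform Lipschitz constant at scales below some $\epsilon$ with a bound $M$ on $|f|$ over $V$, taking $K=\max\{K_1,2M/\epsilon\}$. The one difference is the lower bound: the paper cites \cite[Lemma 2.17]{tukia1981lipschitz} for the Lipschitz estimate of $f^{-1}|_{f(V)}$, whereas your argument proves it directly and so makes the lemma self-contained. You use the Lebesgue number for nearby pairs and the positive minimum from compactness and injectivity for distant pairs.
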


\begin{proof}
    Part (1) follows directly from the definition. For Part (2), the Lipschitz estimate for $f^{-1}|_{f(V)}$ is given by \cite[Lemma 2.17]{tukia1981lipschitz}. To show that $f|_V$ is Lipschitz, by the compactness of $cl(V)\subset U$ and the assumption that $f$ is locally Lipschitz, there exists $\epsilon>0$ and $K_1>0$ such that $|f(x)-f(y)|\le K_1|x-y|$ for all $x,y\in V$ with $|x-y|<\epsilon$. Let $M>0$ be an upper bound of $|f|$ on $V$, let $K = \max \{K_1, 2M/\epsilon\}$, then $f$ is $K$--Lipschitz on $V$. 
\end{proof}

In the following, we use \emph{$\mathcal{C}$ embeddings} to denote either locally quasiconformal embeddings or locally bi-Lipschitz embeddings. If $f:U\to \mathbb{R}^n$ is an open embedding, we use $f^{-1}$ to denote the composition $f(U)\stackrel{f^{-1}}{\to} U\hookrightarrow \mathbb{R}^n$. 

\begin{Lemma}
\label{lem_inverse_C}
If $f$ is an open $\mathcal{C}$ embedding, then $f^{-1}$ is an open $\mathcal{C}$ embedding. 
\end{Lemma}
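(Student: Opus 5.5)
We treat the two meanings of $\mathcal{C}$ separately. In each case the plan is to show that $f^{-1}\colon f(U)\to\mathbb{R}^n$ is an open embedding which is $\mathcal{C}$ on a neighborhood of every point of $f(U)$. First, $f^{-1}$ is an open embedding. By invariance of domain, $f(U)$ is open and $f$ is a homeomorphism onto $f(U)$. Hence $f^{-1}$ is a homeomorphism of $f(U)$ onto the open set $U$, and composing with the inclusion $U\hookrightarrow\mathbb{R}^n$ gives an open embedding.

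\emph{Bi-Lipschitz case.} Fix $y\in f(U)$ and put $x=f^{-1}(y)$. By hypothesis there is an open set $U_\alpha\ni x$ on which $f$ satisfies
\[
\tfrac1K|a-b|\le|f(a)-f(b)|\le K|a-b| .
\]
Since $f$ is an open map, $W=f(U_\alpha)$ is an open neighborhood of $y$. Writing $a=f^{-1}(p)$ and $b=f^{-1}(q)$ for $p,q\in W$, the same two inequalities rearrange to
\[
\tfrac1K|p-q|\le|f^{-1}(p)-f^{-1}(q)|\le K|p-q| .
\]
So $f^{-1}$ is bi-Lipschitz on $W$. The sets $f(U_\alpha)$ form an open cover of $f(U)$, so $f^{-1}$ is locally bi-Lipschitz. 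This case is purely formal.

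\emph{Quasi-conformal case.} Here the definition given is the metric (``$H$'') definition, and inverting it is not a formal rearrangement. The metric condition at $x$ bounds the ratio of $\max$ to $\min$ of $|f(z)-f(x)|$ over the sphere $|z-x|=r$. The corresponding condition for $f^{-1}$ at $y=f(x)$ bounds instead the ratio of $\max$ to $\min$ of $|z-x|$ as $f(z)$ ranges over the sphere $|f(z)-y|=s$. These are genuinely different quantities. The standard route, which I would cite rather than reprove, is Väisälä's theorem (\emph{Lectures on $n$-dimensional quasiconformal mappings}): a homeomorphism between domains of $\mathbb{R}^n$ whose metric dilatation $H(x,f)$ is bounded by $K$ everywhere is $K'$-quasiconformal in the analytic or modulus sense, with $K'$ depending only on $n$ and $K$. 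In the modulus formulation one has
\[
\frac{1}{K'}\,M(\Gamma)\le M(f\Gamma)\le K'\,M(\Gamma)
\]
for every curve family $\Gamma$. This condition is symmetric in $f$ and $f^{-1}$, so $f^{-1}$ is quasiconformal in the modulus sense. Applying the equivalence of definitions again shows that $f^{-1}$ has bounded metric dilatation. We apply all of this on each $U_\alpha$, and again the sets $f(U_\alpha)$ cover $f(U)$.

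The main obstacle is exactly this equivalence between the metric and analytic/modulus definitions of quasiconformality. It is a nontrivial classical theorem, and I would invoke it from the literature instead of reproving it. Note also that the defining condition is pointwise (a $\limsup$ at each point), so ``locally quasi-conformal'' is the same as quasi-conformal with a locally bounded constant. Bounding $H$ by $K$ on each $U_\alpha$ is therefore all that is needed, and no global uniformity issue arises.
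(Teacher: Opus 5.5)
Your proof is correct. The bi-Lipschitz case is exactly the paper's argument, which the paper dismisses as ``follows from the definition''. In the quasi-conformal case you take a different route.

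The paper invokes the fact that a metrically quasi-conformal map between domains of $\mathbb{R}^n$, $n\ge 2$, is locally $\eta$-quasisymmetric (\cite[Theorem 11.14]{heinonen2001lectures}). After that the argument is elementary. The inverse of an $\eta$-quasisymmetric map is $\eta'$-quasisymmetric with $\eta'(t)=1/\eta^{-1}(1/t)$. Any $\eta$-quasisymmetric map has metric dilatation at most $\eta(1)$, because comparing two points on the same sphere $|y-x|=r$ gives the bound immediately.

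You instead pass through the modulus definition, where invariance under inversion is manifest. The cost is that you need both directions of the metric--analytic equivalence:
\begin{itemize}
\item Gehring's theorem that a bounded $H(x,f)$ implies analytic quasiconformality;
\item the converse bound $H(x,f)\le C(n,K)$ for $K$-quasiconformal maps.
\end{itemize}

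Both routes rest on the same Gehring--V\"ais\"al\"a theory. Both give a local constant for $f^{-1}$ depending only on $n$ and the local constant of $f$. The paper's version concentrates the deep input in a single cited theorem. Yours makes the reason for the symmetry more transparent.

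One small point: say explicitly that $n\ge 2$ is being used, as the paper does. The modulus theory you cite from V\"ais\"al\"a is only set up in that range.
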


\begin{proof}
    Recall that we assume $n\ge 2$. If $\mathcal{C}$ denotes locally quasi-conformal embeddings, the result follows from the fact that quasi-conformal maps on open subsets of $\mathbb{R}^n$  are locally quasi-symmetric, see \cite[Theorem 11.14]{heinonen2001lectures}.  If $\mathcal{C}$ denotes locally bi-Lipschitz embeddings, the result follows from the definition.
\end{proof}

As a consequence of Lemma \ref{lem_inverse_C}, the set of $\mathcal{C}$ embeddings forms a pseudo group.
Recall that by definition, a \emph{pseudo group} is a set $\mathcal{S}$ of embeddings of open subsets of $\mathbb{R}^n$ in $\mathbb{R}^n$, such that: 
(1) If $\{U_\alpha\}$ is an open cover of $U$, then $f:U\to \mathbb{R}^n$ is in $\mathcal{S}$ if and only if $f|_{U_\alpha}\in\mathcal{S}$ for all $\alpha$, (2) $f\in\mathcal{S}$ if and only if $f^{-1}\in\mathcal{S}$, (3) If $f:U\to \mathbb{R}^n$ and $g:f(U)\to \mathbb{R}^n$ are both in $\mathcal{S}$, then $g\circ f\in \mathcal{S}$.  In general, every pseudo group of embeddings defines a category of manifolds.
In particular, one defines a \emph{$\mathcal{C}$ manifold} to be a topological manifold together with a system of charts whose transition functions are all $\mathcal{C}$ embeddings. The notions of \emph{$\mathcal{C}$ maps} and \emph{$\mathcal{C}$ homeomorphisms} between $\mathcal{C}$ manifolds are defined similarly.

We prove the following technical lemma on the inverses of open embeddings for later reference. 

\begin{Lemma}
\label{lem_inverse_top_embedding}
    Suppose $\{f_i\}_{i\in \mathbb{N}}$, $f$ are open embeddings from $U$ to $\mathbb{R}^n$, and $\{f_i\}$ converges to $f$ in $C^0$. Suppose $V\Subset f(U)$. Then 
    \begin{enumerate}
        \item $V\subset f_i(U)$ for $i$ sufficiently large.
        \item $\{f_i^{-1}|_{V}\}$ converges to $f^{-1}|_V$ in $C^0$.
    \end{enumerate}
\end{Lemma}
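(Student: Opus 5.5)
The plan is to use a degree / invariance-of-domain argument on a small ball around each point of $\cl(V)$, and then use compactness. Set $K = f^{-1}(\cl(V))$; this is a compact subset of $U$ because $f^{-1}$ is continuous on the open set $f(U)$ and $\cl(V)\subset f(U)$ is compact. Choose $\delta>0$ so that the closed $\delta$--neighborhood $K_\delta$ of $K$ is a compact subset of $U$.

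For part (1), fix $x\in K$ and $0<r\le\delta$, and let $B$ be the closed ball $x + B^n(r)$. Since $f$ is injective and continuous, $f(\partial B)$ is a compact set not containing $f(x)$. Let
\[
\eta(x,r)=\dist\bigl(f(x), f(\partial B)\bigr)>0 .
\]
Once $\sup_{K_\delta}|f_i-f|<\eta/2$, the straight-line homotopy between $f|_{\partial B}$ and $f_i|_{\partial B}$ never passes through any point $y$ with $|y-f(x)|<\eta/2$. Hence the degree of $f_i|_{\partial B}$ around $y$ equals the degree of $f|_{\partial B}$ around $y$. The latter is $\pm1$, because $f$ is an embedding and $y$ lies in $f(\mathring B)$ by invariance of domain. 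By the standard degree property, $y\in f_i(\mathring B)$. We need a uniform version of this. For a fixed $r$, the function $x\mapsto\eta(x,r)$ has a positive lower bound on $K$: one can use compactness of $K$ together with continuity of $f$ on $K_\delta$, or more simply the uniform continuity of $f^{-1}$ on a compact neighborhood of $\cl(V)$ to produce a uniform $\eta$. In either case we obtain $\eta_0>0$ such that for $i$ large every $y\in\cl(V)$ lies in $f_i(x+B^n(r))$, where $x=f^{-1}(y)$. This gives $V\subset f_i(U)$.

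For part (2), apply the same statement with $r=\epsilon$ arbitrary. For $i$ large and any $y\in V$, the point $f_i^{-1}(y)$ lies in $x+B^n(\epsilon)$ with $x=f^{-1}(y)$. Here we use injectivity of $f_i$: the preimage of $y$ under $f_i$ is unique, and we have just produced one in that ball. Therefore $|f_i^{-1}(y)-f^{-1}(y)|\le\epsilon$ uniformly on $V$, which is $C^0$ convergence.

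The main technical point is the uniform positivity of $\eta(x,r)$ over $x\in K$. It follows from a contradiction argument. Suppose $x_k\in K$ and $z_k\in\partial(x_k+B^n(r))$ satisfy $|f(x_k)-f(z_k)|\to0$. Passing to subsequences, we get $x_k\to x$ and $z_k\to z$ with $|z-x|=r$, and $f(x)=f(z)$. This contradicts injectivity of $f$. The degree step is classical; one could alternatively invoke Brouwer's theorem for maps close to the identity after composing with $f^{-1}$ locally.
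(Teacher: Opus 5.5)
Your proof is correct and follows essentially the same route as the paper: a small ball $B$ around each point $x$ of $f^{-1}(\cl(V))$, the gap $\eta=\dist(f(x),f(\partial B))$, the containment of the $\eta/2$--ball around $f(x)$ in $f_i(B)$ once $f_i$ is $\eta/2$--close to $f$, compactness to make $\eta$ uniform, and injectivity of $f_i$ to get part (2) with $r=\epsilon$. The differences are minor: you justify the containment by homotopy invariance of degree (slightly more complete than the paper's appeal to the bounded component of $\mathbb{R}^n\setminus f_i(\partial B)$), and you prove the uniform lower bound on $\eta$ by an explicit subsequence argument, whereas the paper uses $x$-dependent radii plus a finite subcover for part (1).
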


\begin{proof}
    Let $B_x(r)$ denote the closed Euclidean ball centered at $x$ with radius $r$. For each $x\in f^{-1}(\cl(V))$, choose $r_x>0$ such that $B_x(r_x)\subset U$. Let $d_x$ be the distance between $f(x)$ and $f(\partial B_x(r_x))$. If $i$ is sufficiently large so that $\|f_i-f\|_{C^0}<d_x/2$, then $B_{f(x)}(d_x/2)$ is contained in the bounded component of $\mathbb{R}^n\setminus f_i(\partial B_x(r_x))$, so $B_{f(x)}(d_x/2)\subset f_i(B_x(r_x))$, and hence $B_{f(x)}(d_x/2)\subset f_i(U)$. Since $V\Subset f(U)$, one can take finitely many $x\in f^{-1}(\cl(V))$ such that the balls $B_{f(x)}(d_x/2)$ cover $V$, so Part (1) of the lemma is proved.

    For each $\epsilon>0$ such that $\epsilon<\dist(f^{-1}(V), \mathbb{R}^n\setminus U)$, let 
    \[
    d(\epsilon) = \inf_{x\in f^{-1}(V)} \dist(f(x), f(\partial B_x(\epsilon))).
    \]
    By the compactness of $\cl(f^{-1}(V))$, we have $d(\epsilon)>0$.  By the previous argument, if $\|f_i-f\|_{C^0}<d(\epsilon)/2$ and $x\in f^{-1}(V)$, then $B_{f(x)}(d(\epsilon)/2)\subset f_i(B_x(\epsilon))$. Hence $f(x)\in f_i(B_x(\epsilon))$, so $|f_i^{-1}(f(x))-f^{-1}(f(x))|=|f_i^{-1}(f(x))-x|\le \epsilon$.  Therefore, $\|f^{-1}|_V-f_i^{-1}|_V\|_{C^0}\le \epsilon$. This proves Part (2).
\end{proof}

Let $\overline{\mathcal{C}}$ be the set of open embeddings that are locally given by $C^0$--limits of $\mathcal{C}$ embeddings.

\begin{Lemma}
\label{lem_inverse_Cbar}
    If $f\in \overline{\mathcal{C}}$, then $f^{-1}\in \overline{\mathcal{C}}$. 
\end{Lemma}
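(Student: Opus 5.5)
The claim is local on the target, so the plan is to work near one point of $f(U)$, pull back to a neighborhood where $f$ is a $C^0$--limit of $\mathcal{C}$ embeddings, and combine Lemma \ref{lem_inverse_top_embedding} with Lemma \ref{lem_inverse_C}. Write $f:U\to\mathbb{R}^n$ for the open embedding in $\overline{\mathcal{C}}$, so $f^{-1}$ is defined on the open set $f(U)$. Since $\overline{\mathcal{C}}$ is defined by a local condition, it suffices to show that every $y\in f(U)$ has an open neighborhood $W\subset f(U)$ on which $f^{-1}|_W$ is a $C^0$--limit of $\mathcal{C}$ embeddings.

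Fix $y\in f(U)$ and set $x=f^{-1}(y)$. By the definition of $\overline{\mathcal{C}}$, there is an open neighborhood $U_x\subset U$ of $x$ and a sequence of $\mathcal{C}$ embeddings $f_i:U_x\to\mathbb{R}^n$ converging to $f|_{U_x}$ in $C^0$. Since $f$ is an open embedding, $f(U_x)$ is an open neighborhood of $y$, so we may choose an open $W\ni y$ with $W\Subset f(U_x)$.

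Now apply Lemma \ref{lem_inverse_top_embedding} to the open embeddings $f_i, f|_{U_x}:U_x\to\mathbb{R}^n$ and to $W$. Part (1) gives $W\subset f_i(U_x)$ for all $i$ large. Part (2) gives $f_i^{-1}|_W\to f^{-1}|_W$ in $C^0$. By Lemma \ref{lem_inverse_C}, each $f_i^{-1}$ is an open $\mathcal{C}$ embedding of $f_i(U_x)$. Its restriction to the open subset $W$ is therefore still a $\mathcal{C}$ embedding, since $\mathcal{C}$ is defined by a local condition (pseudo group axiom (1)). Hence $f^{-1}|_W$ is a $C^0$--limit of $\mathcal{C}$ embeddings. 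As $y$ was arbitrary, $f^{-1}\in\overline{\mathcal{C}}$.

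The only real subtlety is that the approximants $f_i^{-1}$ must all be defined on a common domain. This is exactly why $W$ is chosen compactly inside $f(U_x)$ and Part (1) of Lemma \ref{lem_inverse_top_embedding} is invoked, discarding finitely many $i$. Everything else is formal.
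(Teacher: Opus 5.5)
Your proof is correct and follows essentially the same route as the paper's. The paper shrinks on the source side, using covers with $U_\alpha'\Subset U_\alpha$ and taking $V=f(U_\alpha')$, while you choose $W\Subset f(U_x)$ directly around each point; this difference is immaterial, and both then use Lemma \ref{lem_inverse_top_embedding} (1) to get a common domain and (2) for convergence of the inverses, together with Lemma \ref{lem_inverse_C}.
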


\begin{proof}
    Let $U$ denote the domain of $f$, and suppose $\{U_\alpha\},\{U_\alpha'\}$ are open covers of $U$ such that $U_\alpha'\Subset U_\alpha$ and each $f|_{U_\alpha}$ is a $C^0$--limit of $\mathcal{C}$ embeddings of $U_\alpha$ in $\mathbb{R}^n$. Suppose $f|_{U_\alpha}$ is the $C^0$--limit of $g_\alpha^i: U_\alpha\to \mathbb{R}^n$. By Lemma \ref{lem_inverse_top_embedding} (1), after removing finitely many terms, we may assume that the range of each $g_\alpha^i$ contains $f(U_\alpha')$. Let $h_\alpha^i=(g_\alpha^i)^{-1}|_{f(U_\alpha')}$. By Lemma \ref{lem_inverse_top_embedding} (2), $h_\alpha^i$ converge to $f^{-1}|_{f(U_\alpha')}$ in $C^0$. So $f^{-1}|_{f(U_\alpha')}$ is a $C^0$--limit of $\mathcal{C}$ embeddings.
\end{proof}

As a consequence, $\overline{\mathcal{C}}$ embeddings form a pseudo group. We define the notions of $\overline{\mathcal{C}}$ manifolds, $\overline{\mathcal{C}}$ maps, $\overline{\mathcal{C}}$ homeomorphisms accordingly.

\begin{remark}
\label{rmk_n=4}
    Although all results in the rest of the paper work for general $n$, we will focus on the case for $n=4$. This is because when $n\neq 4$, every open embedding $f: U \to \mathbb{R}^n$ is in $\overline{\mathcal{C}}$. In fact,  every collared embedding of a closed ball extends to a homeomorphism of $\mathbb{R}^n$ by the Schoenflies theorem, and when $n\neq 4$, every homeomorphism of $\mathbb{R}^n$ can be $C^0$--approximated by PL homeomorphisms (\cite{connell1963approximating, bing1963stable, kirby1969stable} for $n\ge 5$, and \cite{moise1952affine4,bing1959alternative} for $n\le 3$). In this case, the analogous results to Theorems \ref{thm_main_bilip}, \ref{thm_main_qc} were already proved in \cite{sullivan1979hyperbolic}.
\end{remark}

Unlike the case for $\mathcal{C}$ embeddings, the analogous result to Lemma \ref{lem_local_C_implies_global_C} for $\overline{\mathcal{C}}$ embeddings is not obvious from the definition, because local approximations of homeomorphisms may not glue to global approximations. Therefore, we introduce the following notation.

\begin{Definition}
We say that an open embedding $f:U\to\mathbb{R}^n$ is a \emph{globally $\overline{\mathcal{C}}$ embedding}, if it can be written as the $C^0$--limit of a sequence of $\mathcal{C}$ embeddings of $U$.
\end{Definition}

We will show in Section \ref{sec_local_Cbar_to_global_Cbar} that if $f:U\to \mathbb{R}^n$ is a $\overline{\mathcal{C}}$ embedding and $V\Subset U$, then $f|_V$ is a globally $\overline{\mathcal{C}}$ embedding.

If $A\subset \mathbb{R}^n$ is not necessarily open, we say that a topological embedding $f:A\to \mathbb{R}^n$ is a \emph{$\mathcal{C}$ embedding} (resp.  \emph{$\overline{\mathcal{C}}$ embedding}) if it can be extended to a $\mathcal{C}$ (resp. $\overline{\mathcal{C}}$) embedding of an open neighborhood of $A$. We say that $f:A\to \mathbb{R}^n$ is a \emph{globally $\overline{\mathcal{C}}$ embedding}, if it is the $C^0$--limit of a sequence of $\mathcal{C}$ embeddings of $A$. 

For $A, B\subset \mathbb{R}^n$, let 
\[
\Emb(A;B),\quad \Emb_{\mathcal{C}}(A;B),\quad \Emb_{\overline{\mathcal{C}}}(A;B)
\]
denote the sets of topological, $\mathcal{C}$, and $\overline{\mathcal{C}}$ embeddings respectively of $A$ in $\mathbb{R}^n$ such that the image is contained in $B$. If $A\subset B$, we use $\id$ to denote the identity embedding of $A$. If $A'\subset A\cap B$, let $\Emb(A,A';B)\subset \Emb(A;B)$ denote the subset consisting of embeddings that equal the identity on $A'$, and define $\Emb_{\mathcal{C}}(A,A';B)$, $\Emb_{\overline{\mathcal{C}}}(A,A';B)$ similarly. 
Unless otherwise specified, all embedding spaces are endowed with the $C^0$--topology, and all maps between embedding spaces are assumed continuous.

\begin{Definition}
If $A\subset B$, $X$ are sets, and $f:A\to X$ and $g:B\to X$ are set-theoretic maps, the \emph{extension of $f$ to $B$ by $g$} is defined to be the map $\hat f:B\to X$ such that $\hat f(x) = f(x)$ if $x\in A$, and $\hat f(x) = g(x)$ if $x\in B\setminus A$. 
\end{Definition}

For later reference, we record the following standard lemmas about extensions of embeddings. 
\begin{Lemma}
\label{lem_extend_cpt_supported_homeo}
    Suppose $U\subset \mathbb{R}^n$ is open and $f:U\to \mathbb{R}^n$ is a compactly supported embedding. Let $\hat f$ be the extension of $f$ to $\mathbb{R}^n$ by the identity. Then 
    \begin{enumerate}
        \item $\hat f$ is a homeomorphism of $\mathbb{R}^n$.
        \item $f(U)=U$.
        \item If $f$ is a $\mathcal{C}$ embedding, then $\hat f$ is a $\mathcal{C}$ embedding.
    \end{enumerate}
\end{Lemma}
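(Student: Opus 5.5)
Let $K=\supp f$, a compact subset of $U$. I will show that $\hat f$ is a continuous injection that is a local homeomorphism and is proper, and then deduce (2) and (3) from the resulting description of $\hat f$.

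\textbf{Continuity.} On the open set $U$, $\hat f=f$ is continuous. On the open set $\mathbb{R}^n\setminus K$, $\hat f$ is the identity: points of $U\setminus K$ satisfy $f(x)=x$, and points outside $U$ are sent to themselves by definition. Since $U\cup(\mathbb{R}^n\setminus K)=\mathbb{R}^n$, $\hat f$ is continuous.

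\textbf{Injectivity.} First I claim $f(K)\subset K$. Let $x\in K$ and suppose $y=f(x)\notin K$. If $y\notin U$, then $y\in f(U)\setminus U$. If $y\in U\setminus K$, then $f(y)=y=f(x)$, so $x=y\notin K$ by injectivity of $f$, a contradiction. So it remains to rule out $f(K)\not\subset U$. Here I use an invariance-of-domain and connectedness argument. The set $f(U\setminus K)=U\setminus K$, so the open set $f(U)$ contains $U\setminus K$. Moreover $f(K)=f(U)\setminus (U\setminus K)$ is compact.

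Consider the open set $W=(U\setminus K)\cup f(U)\cup(\mathbb{R}^n\setminus \cl U)$. More cleanly, I show that $\hat f$ is injective and open:
\begin{itemize}
\item If $x\in U$ and $y\notin U$ with $f(x)=y$, then $x\in K$ (otherwise $f(x)=x\ne y$).
\end{itemize}
To exclude this case, I argue by degree. $\hat f$ is a proper continuous map of $\mathbb{R}^n$ equal to the identity outside the compact set $K$, so it has degree $1$. At a point $y\notin U$ with $y\in f(K)$, the local degree of $\hat f$ over $y$ is the sum of the contributions from the preimages $f^{-1}(y)\in K$ and from $y$ itself. The contribution from $y$ is $+1$, since $\hat f$ is the identity near $y$: here $y\notin K$ because $K\subset U$. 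The contribution from $f^{-1}(y)$ is $\pm1$, since $f$ is a local homeomorphism there. The total must equal $1$ for every $y$, so the contribution from $f^{-1}(y)$ must be $0$, which is impossible. The same degree-counting argument excludes $f(x)=y$ with $y\in U\setminus K$, which we already ruled out directly. Hence $\hat f$ is injective, and $f(K)\subset U$ with $f(U)\subset U$.

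\textbf{Surjectivity and (1).} A proper map of degree $1$ is surjective. So $\hat f$ is a continuous bijection of $\mathbb{R}^n$. It is proper, hence closed, so it is a homeomorphism. This proves (1). For (2), $\hat f$ maps $\mathbb{R}^n\setminus U$ identically onto itself, and $\hat f$ is bijective, so $\hat f(U)=U$, i.e.\ $f(U)=U$.

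\textbf{(3).} Take the open cover $\{U,\ \mathbb{R}^n\setminus K\}$ of $\mathbb{R}^n$. On $U$, $\hat f=f$ is $\mathcal{C}$. On $\mathbb{R}^n\setminus K$, $\hat f$ is the identity, which is $\mathcal{C}$. Since $\mathcal{C}$ is a pseudo group, condition (1) of the pseudo-group definition gives that $\hat f$ is $\mathcal{C}$.

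The main obstacle is the injectivity step, i.e.\ ruling out that $f$ moves a point of $K$ onto a point where $\hat f$ is the identity. The degree argument above (properness together with local degree $\pm1$ for local homeomorphisms) is the tool I would use. Alternatively, one can apply invariance of domain to show that $\hat f(\mathbb{R}^n)$ is open and closed.
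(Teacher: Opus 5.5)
Your proof is correct, but you reach (1) by a different route than the paper.

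The paper notes two facts about $\hat f$:
\begin{itemize}
\item it is a local homeomorphism, since $f$ is an open embedding on $U$ by invariance of domain and $\hat f=\id$ on $\mathbb{R}^n\setminus\supp f$;
\item it is proper, since $\hat f^{-1}(A)\subset A\cup\supp f$. You assert properness without writing this one line, so add it.
\end{itemize}
A proper local homeomorphism onto the connected manifold $\mathbb{R}^n$ is a covering map. Simple connectivity of $\mathbb{R}^n$ then forces it to be a homeomorphism, which gives injectivity and surjectivity in one stroke.

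You use the same two inputs but replace the covering-space/$\pi_1$ argument by degree theory. Your case analysis reduces injectivity to excluding $f(x)=y\notin U$ with $x\in K$. There the fiber $\hat f^{-1}(y)=\{x,y\}$ has local degrees $\pm1$ and $+1$, which cannot sum to $\deg\hat f=1$. Surjectivity then also comes from the degree, and closedness from properness. This is rigorous: additivity of local degrees over a finite fiber, and local degree $\pm1$ where the map is a local homeomorphism, are standard. It is just less economical than the paper's argument.

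You could drop the case analysis entirely. For a local homeomorphism, the local degree is locally constant on the connected space $\mathbb{R}^n$. It equals $+1$ where $\hat f=\id$, so it is $+1$ everywhere. Hence every (finite) fiber has exactly $\deg\hat f=1$ point.

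Your deductions of (2) and (3) match the paper's. Finally, delete the abandoned material in the middle: the set $W$ and the one-item list about showing $\hat f$ is ``injective and open.'' None of it is used.
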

\begin{proof}
    It is clear that $\hat f$ is a local homeomorphism and is locally $\mathcal{C}$ if $f$ is, and that Statement (2) is a consequence from Statement (1). We only need to show that $\hat f$ is a homeomorphism. Note that for each compact set $A\subset \mathbb{R}^n$, we have $\hat{f}^{-1}(A) \subset A\cup \supp f$, which is compact. Since $\hat f$ is a local homeomorphism, this implies that $\hat f$ is a covering map. Since $\mathbb{R}^n$ is simply connected, we conclude that $\hat f$ is a homeomorphism.
\end{proof}

\begin{Lemma}
\label{lem_extend_emb_cpt_supported_difference}
    Suppose $V'\Subset V\Subset U$, $f:U\to \mathbb{R}^n$ is an embedding, and $g:V\to \mathbb{R}^n$ is an embedding such that $f|_{V\setminus V'} = g|_{V\setminus V'}$. Let $\hat g$ be the extension of $g$ to $U$ by $f$. Then 
    \begin{enumerate}
        \item $\hat g$ is an embedding.
        \item $\hat g(V) = f(V)$, $\hat g(U) = f(U)$.
        \item $\hat g$ is a $\mathcal{C}$ embedding if both $f$ and $g$ are $\mathcal{C}$ embeddings. 
    \end{enumerate}

\end{Lemma}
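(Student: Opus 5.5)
The plan is to argue directly from the set-theoretic definition of $\hat g$, using the compactness of $\cl(V')$ together with the fact that $f$ is injective on all of $U$. Throughout, the key point is that $\hat g$ coincides with $f$ on the open set $U\setminus \cl(V')$ and with $g$ on the open set $V$. These two open sets cover $U$: $\cl(V')\subset V$ because $V'\Subset V$. On their overlap $V\setminus \cl(V')\subset V\setminus V'$, we have $f=g$ by hypothesis. Hence $\hat g$ is well defined, and it is continuous because it agrees with a continuous map on each set of an open cover. Local injectivity and openness follow from the same two descriptions, and for Part (3) the $\mathcal{C}$ property is local, since $\mathcal{C}$ embeddings form a pseudo group (condition (1) of a pseudo group). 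So Part (3) follows immediately once Part (1) is established.

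The main step is global injectivity, and I would obtain it together with Part (2). First I show $g(V)=f(V)$. Consider $W=V\setminus\cl(V')$; then $g(V)\supset g(W)=f(W)$. By invariance of domain, $g(V)$ is open and $g(\cl(V'))$ is compact. The aim is to show $g(\cl V')\subset f(V)$ and conversely. I would argue via a degree or connectivity argument: take a compact set $K$ with $\cl(V')\subset \mathring K\subset K\subset V$, for instance a finite union of closed balls. Then $g|_{\partial K}=f|_{\partial K}$, and for an embedding of a compact $n$-manifold-like region, the image of the interior is the union of the bounded complementary components of the image of the boundary that have nonzero local degree. Rather than rely on fine point-set topology, the cleaner route I would take is the following. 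Set $h=g\circ (f|_V)^{-1}$ on $f(V)$. It is an open embedding $f(V)\to\mathbb{R}^n$ that equals the identity outside the compact set $f(\cl V')$, so it is a compactly supported embedding. By Lemma \ref{lem_extend_cpt_supported_homeo}, $h$ extends by the identity to a homeomorphism of $\mathbb{R}^n$ and satisfies $h(f(V))=f(V)$, that is, $g(V)=f(V)$.

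With this in hand, $\hat g = \hat h\circ f$ on $U$, where $\hat h$ is the extension of $h$ to $\mathbb{R}^n$ by the identity. Indeed, on $V$ we have $\hat h\circ f=g$, and on $U\setminus V$ the point $f(x)$ lies outside $f(V)\supset \supp h$, so $\hat h(f(x))=f(x)$. Since $\hat h$ is a homeomorphism of $\mathbb{R}^n$, $\hat g$ is an embedding, which gives Part (1). Moreover $\hat g(U)=\hat h(f(U))$, which equals $f(U)$ because $\hat h$ preserves $f(V)$ and fixes the complement $f(U)\setminus f(V)$; this gives Part (2). The only delicate point I expect is checking that $h$ really is compactly supported in $f(V)$: its support lies in $f(\cl V')$, which is compact and contained in $f(V)$. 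This step is exactly where the hypothesis $V'\Subset V$ is used.
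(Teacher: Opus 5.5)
Your proposal is correct and follows essentially the same route as the paper. You set $h=g\circ (f|_V)^{-1}$ on $f(V)$, note that its support lies in the compact set $f(\cl(V'))\subset f(V)$, extend it by the identity to a homeomorphism of $\mathbb{R}^n$ via Lemma \ref{lem_extend_cpt_supported_homeo}, and read off Parts (1) and (2) from $\hat g=\hat h\circ f$ and $\hat h(f(V))=f(V)$. The abandoned degree/connectivity discussion is unnecessary, and your locality argument for Part (3) is a valid way to make precise what the paper calls ``clear.''
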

\begin{proof}
    By the assumptions, $g\circ f^{-1}:f(V)\to \mathbb{R}^n$ is a compactly supported embedding. By Lemma \ref{lem_extend_cpt_supported_homeo}, the extension of $g\circ f^{-1}|_{f(V)}$ to $\mathbb{R}^n$ by the identity is a homeomorphism $h:\mathbb{R}^n\to \mathbb{R}^n$. Note that $\hat{g} = h\circ f$, so $\hat{g}$ is an embedding. By Lemma \ref{lem_extend_cpt_supported_homeo}(2), we have $h(f(V)) = f(V)$, so $\hat{g}(V) = f(V)$, and hence $\hat{g}(U) = f(U)$. Statement (3) is also clear. 
\end{proof}

\section{First considerations}
\label{sec_first_consideration}
This section begins the proof of our main theorems and reviews some arguments from the literature that we will need.

\subsection{$\mathcal{C}$-smoothings of $\overline{\mathcal{C}}$-embeddings }

The first key point is that we can reduce the proof of all of our theorems to the following smoothing statement:

\begin{Proposition}
\label{prop_prop_C_smoothing_Cbar_embeddings}
Suppose $U$ is an open subset of $\mathbb{R}^n$, and $A,A',B,B'$ are open subsets of $U$ such that $A' \Subset A$, $B'\Subset B$. Suppose $f:U\to \mathbb{R}^n$ is a $\overline{\mathcal{C}}$ embedding such that $f|_{B}$ is a $\mathcal{C}$ embedding.  Then for every $\epsilon>0$, there exists an $\epsilon$--isotopy $F:U\times I \to \mathbb{R}^n$, such that the following statements hold.
    \begin{enumerate}
        \item $F(-,0)=f$
        \item $F(-,1)$ is a $\mathcal{C}$ embedding on $A'\cup B'$.
        \item $F$ is compactly supported in $A$, and $F(-,t)$ is a $\overline{\mathcal{C}}$ embedding for each $t$.
    \end{enumerate}
\end{Proposition}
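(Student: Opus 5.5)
We follow Sullivan's version of the Kirby torus trick from \cite{sullivan1979hyperbolic}, with the torus replaced by a closed, almost parallelizable hyperbolic manifold. The plan is to reduce to a handle-by-handle statement, and then carry out each local step by a \emph{universal $\overline{\mathcal{C}}$ isotopy} (Definition \ref{def_universal_Cbar_isotopy}), which keeps every intermediate embedding in $\overline{\mathcal{C}}$.

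First, choose a fine handle decomposition, or triangulation, of a neighbourhood of $\cl(A')$ inside $A$, with mesh small compared with $\epsilon$ and with the local modulus of continuity of $f$. Order the handles $h_1,\dots,h_N$ that meet $\cl(A')\setminus B'$ by index. We then induct on handles, using an intermediate open set $B''$ with $B'\Subset B''\Subset B$ so that there is room to shrink neighbourhoods. Suppose the current embedding $f_{k-1}$ is already $\mathcal{C}$ near $B''\cup h_1\cup\dots\cup h_{k-1}$. For the handle $h_k\cong B^p\times B^{4-p}$ we must isotope $f_{k-1}$, with support in a small neighbourhood of $h_k$, so that it becomes $\mathcal{C}$ near $h_k$. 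The isotopy must stay fixed near the part of $\partial B^p\times B^{4-p}$ where $f_{k-1}$ is already $\mathcal{C}$. Since the supports are small and the number of handles is finite, the isotopies compose to an $\epsilon$--isotopy. Restricting one final time gives statements (1)--(3).

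The local step uses the torus trick. Pass to $B^p\times\mathbb{R}^{4-p}$ and immerse a punctured hyperbolic manifold $M^{4-p}$ into $\mathbb{R}^{4-p}$; this is possible because $M$ is almost parallelizable. Pull the embedding back to $B^p\times M$, and then lift it to the appropriate cover, where it is close to a map of bounded distance from the identity. We then use the $\mathcal{C}$ analogue of the relative handle-straightening and Alexander-trick arguments. This is the ``extension after restriction'' result, Theorem \ref{thm_match_handle_in_C}: an embedding that is $\mathcal{C}$ near the attaching region can be replaced, after restriction, by one that is $\mathcal{C}$ on the whole handle and agrees with the original near the boundary. The required input is that the local embedding is a $C^0$ limit of $\mathcal{C}$ embeddings on the relevant compact piece. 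That input is supplied by Theorem \ref{thm_local_Cbar_to_global_Cbar} (local $\overline{\mathcal{C}}$ implies global $\overline{\mathcal{C}}$ on compact subsets). It is exactly what replaces the PL approximation, which is unavailable in dimension $4$. With it, the straightening can be done within $\mathcal{C}$, and we obtain the modified embedding together with an isotopy to it, via an Alexander-type isotopy on the covering.

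The main obstacle is statement (3): each intermediate map $F(-,t)$ must lie in $\overline{\mathcal{C}}$ on all of $U$, not merely near the handle. Gluing the local isotopy to $f$ outside its support only produces a map that is locally a $C^0$ limit of $\mathcal{C}$ embeddings on an open cover. We handle this with Lemma \ref{lem_extend_emb_cpt_supported_difference}, together with the fact that the local isotopy has the form $h_t\circ f$, where $h_t$ is compactly supported and is itself a $C^0$ limit of compactly supported $\mathcal{C}$ homeomorphisms (it is built from $\mathcal{C}$ data via the Alexander trick). Then $h_t\circ f$ is locally a composition of $\overline{\mathcal{C}}$ maps, hence in $\overline{\mathcal{C}}$ by the pseudogroup property. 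This is precisely the universal $\overline{\mathcal{C}}$ isotopy property, and checking it for the isotopies produced by Theorem \ref{thm_match_handle_in_C} is the step I expect to be the most delicate.
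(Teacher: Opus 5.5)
\textbf{There is a genuine gap at the step that is supposed to make the map $\mathcal{C}$ on $A'$.} Theorem \ref{thm_match_handle_in_C} is not a smoothing result. It applies only to embeddings that are $C^0$--close to $\id$ and equal $\id$ near the attaching region $B^m\times\partial B^k$. Its output $\psi(f)$ coincides with $f$ on $B^m(1/4)\times B^k$, so $\psi(f)$ is $\mathcal{C}$ near the core only if $f$ already was.

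The map you propose to straighten, $f_{k-1}$ near $h_k$, is neither close to $\id$ nor equal to $\id$ near the attaching region. Handle straightening yields a $\mathcal{C}$ map only because its endpoint is the identity near the cores. So you must first pass to coordinates in which your map is $C^0$--close to $\id$ and in which $\id$ corresponds to a $\mathcal{C}$ embedding.

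The missing idea is to compose with a $\mathcal{C}$ approximant:
\begin{enumerate}
\item Shrink $A$ so that $A\Subset U$. Use Theorem \ref{thm_local_Cbar_to_global_Cbar} to choose a $\mathcal{C}$ embedding $g$ of $A$ that is $C^0$--close to $f|_A$.
\item Run the local contractibility Theorem \ref{thm_contract_C_embedding} on the single map $h=g^{-1}\circ f$, defined on some $\hat A$ with $A'\Subset \hat A\Subset A$. This $h$ is close to $\id$ and is $\mathcal{C}$ on $B'\cap\hat A$. It need not be $\mathcal{C}$ anywhere else, because handle straightening works for arbitrary topological embeddings near $\id$.
\item The handle-by-handle induction takes place inside this one application. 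Once the earlier handles are straightened, the map equals $\id$ near the next attaching region, which is exactly what Theorem \ref{thm_handle_straighten_in_C} requires. (With a fresh approximant for each handle, as in your scheme, the map to be straightened is only $\mathcal{C}$, not $\id$, near the attaching region.)
\item The resulting isotopy $G$ ends at a map equal to $\id$ on $A'$. So $F=g\circ G$, extended by $f$, ends at a map equal to $g$ on $A'$, hence $\mathcal{C}$ there. This is Proposition \ref{prop_C_smoothing_in_global_Cbar_chart}, which together with Theorem \ref{thm_local_Cbar_to_global_Cbar} gives the statement.
\end{enumerate}

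\textbf{For statement (3), your pseudogroup step is fine, but the claim it rests on is unproved, and the reason you give is wrong.} You need that $F(-,t)\circ F(-,0)^{-1}$ is $\overline{\mathcal{C}}$; that is the whole content. It is not ``built from $\mathcal{C}$ data.'' On each handle it is built from $J_t(\psi(\phi))^{-1}$, where $\phi$ is the current, non-$\mathcal{C}$ map being straightened.

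Because $\psi$ is a nonlocal torus-trick construction that is merely $C^0$--continuous on its domain $\mathcal{P}$, the only route to $\psi(\phi)\in\overline{\mathcal{C}}$ is as follows. You must approximate $\phi$ on the whole handle by $\mathcal{C}$ embeddings that themselves lie in $\mathcal{P}$, i.e.\ that equal $\id$ near the attaching region. The paper obtains these by shrinking the handle with $s_r$ and applying Lemma \ref{lem_C_approx_id}; this is Proposition \ref{prop_match_global_Cbar_handle}. It then uses Lemma \ref{lem_alex_isot_global_Cbar} for the Alexander isotopy and its inverse. Finally, Lemmas \ref{lem_compose_cptly_supported_isotopy} and \ref{lem_universal_Cbar_extend_domain} carry global approximability from each handle to the next, where $\psi$ needs it again (Propositions \ref{prop_handle_straightening_in_Cbar} and \ref{prop_match_global_contractibility}).

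You flag this as ``the most delicate'' step but give no argument for it. As written, the proposal therefore establishes neither (2) nor the $\overline{\mathcal{C}}$ part of (3).
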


Establishing Proposition \ref{prop_prop_C_smoothing_Cbar_embeddings} is the main difficulty that needs to be overcome to prove the theorems.
We defer its proof to Section \ref{sec_local_Cbar_to_global_Cbar}. In this section, we explain why Proposition \ref{prop_prop_C_smoothing_Cbar_embeddings} implies our main theorems.  This is a standard argument (cf., for example, \cite[Section 4]{munkres2016elementary}):

\begin{Theorem}
\label{thm_construct_C_chart}
    Every $\overline{\mathcal{C}}$ manifold admits a $\overline{\mathcal{C}}$ chart whose transition functions are in $\mathcal{C}$.
\end{Theorem}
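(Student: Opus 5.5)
We prove Theorem \ref{thm_construct_C_chart} by the standard inductive chart-modification argument (as in Munkres' smoothing of PL or $C^1$ structures), using Proposition \ref{prop_prop_C_smoothing_Cbar_embeddings} as the local step.

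Let $M$ be a $\overline{\mathcal{C}}$ manifold, which we take to be second countable and paracompact. First choose a countable, locally finite collection of $\overline{\mathcal{C}}$ charts $\phi_k:W_k\to\mathbb{R}^n$, $k\ge 1$. Next choose open sets $V_k\Subset V_k^\ast\Subset W_k$ such that the $V_k$ still cover $M$. We then modify the charts one at a time. The aim is to produce new $\overline{\mathcal{C}}$ charts $\psi_k$, each defined on a slightly shrunken domain containing $\cl(V_k)$, such that every transition map $\psi_j\circ\psi_k^{-1}$ with $j<k$ is a $\mathcal{C}$ embedding.

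The inductive step goes as follows. Suppose $\psi_1,\dots,\psi_{k-1}$ have already been built on domains $W_1',\dots,W_{k-1}'$, and these charts are pairwise $\mathcal{C}$-compatible. Then $D=W_1'\cup\dots\cup W_{k-1}'$ carries a $\mathcal{C}$ atlas. Set $U=\phi_k(W_k)$ and $f=\id_U$, so that $f$ is trivially a $\overline{\mathcal{C}}$ embedding. The plan is to apply Proposition \ref{prop_prop_C_smoothing_Cbar_embeddings} to $f^{-1}$ viewed through the charts of $D$. Concretely, we need to replace $\phi_k$ by $\psi_k=F(-,1)\circ\phi_k$ so that $\psi_k\circ\psi_j^{-1}$ is $\mathcal{C}$ on the overlaps with the earlier domains, shrunk slightly.

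Since the earlier charts are mutually $\mathcal{C}$, it suffices to achieve this one earlier chart at a time. To do so, we cover the relevant compact pieces of $\phi_k(W_k\cap D)$ by finitely many sets. On each piece, the map $\phi_j\circ\phi_k^{-1}$ (or rather its inverse) is a $\overline{\mathcal{C}}$ embedding, to which we apply the proposition. Here $B$ is the region already made compatible at earlier substeps, and $A'\Subset A$ is the new region to be fixed. Clause (3) of the proposition is what makes this substep repeatable: the isotopy is compactly supported in $A$, so compatibility already achieved on $B'$ is preserved, and the result is still a $\overline{\mathcal{C}}$ embedding, so the proposition applies again at the next substep. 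Finitely many substeps then yield $\psi_k$.

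Each step shrinks the domains slightly; we keep $V_k\Subset W_k'$ throughout, using the nested choices $V_k\Subset V_k^\ast$. Local finiteness is what makes the passage to the limit harmless: each point lies in only finitely many $W_k$, so any given chart is modified only finitely often, and the final atlas $\{\psi_k|_{V_k}\}$ is well defined. It consists of $\overline{\mathcal{C}}$ charts, because $F(-,1)$ is $\overline{\mathcal{C}}$ and $\overline{\mathcal{C}}$ embeddings form a pseudo group. Its transition maps are $\mathcal{C}$ by construction.

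I expect the main obstacle to be the bookkeeping of the nested open sets. The proposition modifies the embedding rather than its inverse, so at each substep we will have to check whether to smooth $\phi_j\circ\psi_k^{-1}$ and compose, or to use Lemma \ref{lem_inverse_Cbar} to pass to inverses. We must also choose $A,A',B,B'$ so that the supports of successive isotopies never spoil earlier compatibilities on the pieces we retain. The analytic content is entirely carried by Proposition \ref{prop_prop_C_smoothing_Cbar_embeddings}.
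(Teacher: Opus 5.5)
Your proposal is correct and is essentially the paper's argument: a Munkres-style induction over a countable locally finite atlas. Each new chart is corrected against the finitely many earlier (already corrected) charts one at a time, by applying Proposition \ref{prop_prop_C_smoothing_Cbar_embeddings} to the transition map with $B$ the region already made compatible, and local finiteness guarantees that each domain is shrunk only finitely often.

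The only slip concerns which map you smooth at each substep. It should be the transition $\psi_j\circ\psi_k^{-1}$ to the already corrected chart $\psi_j$, with $\psi_k$ the current, partially corrected new chart, and not $\phi_j\circ\phi_k^{-1}$. If $F$ is the isotopy the proposition produces for this map, the corrected chart is $G(-,1)^{-1}\circ\psi_k$, where $G(-,1)=\psi_k\circ\psi_j^{-1}\circ F(-,1)$ is extended by the identity. This is exactly the modification the paper makes.
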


\begin{proof}[Proof, assuming Proposition \ref{prop_prop_C_smoothing_Cbar_embeddings}]
Suppose $M$ is a $\overline{\mathcal{C}}$ manifold. Then there exist countable open covers $\{U_a\}_{a\in \mathbb{Z}^+}$, $\{U_a'\}_{a\in \mathbb{Z}^+}$ of $M$ with the following properties: (1) $U_a'\Subset U_a$ for all $a$, (2) $\{U_a\}_{a\in\mathbb{Z}^+}$ is locally finite, (3) each $U_a$ is endowed with an open embedding $\varphi_a:U_a\to \mathbb{R}^n$, such that $\varphi_b\circ \varphi_a^{-1}: \varphi_a(U_a\cap U_b)\to\mathbb{R}^n$ is a $\overline{\mathcal{C}}$ embedding for each pair $a,b$. We show that one can isotope $\{\varphi_a\}$ and shrink the domains so that the transition functions are  $\mathcal{C}$ embeddings.

We construct a sequence of compactly supported isotopies $F_a:U_a\times I\to U_a$ such that the following conditions hold:
\begin{enumerate}
    \item $F_a(-,0)=\id$. 
    \item $\varphi_a\circ F_a(-,t)\circ \varphi_a^{-1}:\varphi_a(U_a)\to \varphi_a(U_a)$ is $\overline{\mathcal{C}}$ for all $t$. 
    \item Let $\varphi_a' = \varphi_a\circ F_a(-,1)$. Then there exists a sequence of open sets $\{\hat U_a\}_{a\in\mathbb{Z}^+}$ with $U_a'\Subset \hat{U}_a\Subset U_a$ such that $\varphi_b'\circ (\varphi_a')^{-1}: \varphi_a'(\hat{U}_a\cap \hat{U}_b)\to\mathbb{R}^n$ is a $\mathcal{C}$ embedding for each pair $a,b$.
\end{enumerate}

We construct the desired sequence by induction. Suppose we have $\{F_a\}$ for $a=1,\dots,q$ such that the above properties hold for all $a,b\in\{1,\dots,q\}$. Let $\hat{U}_a$ $(a=1,\dots,q)$ be the sequence of open sets given by Condition (3) above. 

Let $\hat{U}_{q+1}, \hat{U}_{q+1}'$ be open sets such that $U_{q+1}'\Subset \hat{U}_{q+1}'\Subset \hat{U}_{q+1} \Subset U_{q+1}$. For each $a=1,\dots,q$, let $\hat{U}_a'$ be an open set such that $U_a'\Subset \hat{U}_a'\Subset \hat{U}_a$. 

By Proposition \ref{prop_prop_C_smoothing_Cbar_embeddings}, for each $a=1,\dots,q$ and $\epsilon>0$, there is a compactly supported $\epsilon$--isotopy 
\[
G:\varphi_{q+1}(U_{q+1})\times I\to \varphi_{q+1}(U_{q+1})
\]
such that $G(-,0)=\id$, and $G(-,t)$ is $\overline{\mathcal{C}}$ for all $t$, and 
\[
\varphi_a' \circ \varphi_{q+1}^{-1}\circ G(-,1): \varphi_{q+1}(U_{q+1}\cap U_a)\to \mathbb{R}^n
\]
is a $\mathcal{C}$ embedding on $\varphi_{q+1}(\hat{U}_{q+1}\cap \hat{U}_a)$. Take $\epsilon$ such that
\[
\epsilon< \dist(\varphi_{q+1}(\hat{U}'_{q+1}\cap \hat{U}'_a),\mathbb{R}^n\setminus\varphi_{q+1}(\hat{U}_{q+1}\cap \hat{U}_a)).
\]
Define 
\[
F_{q+1,a}(-,t) = \varphi_{q+1}^{-1}\circ G(-,t)^{-1}\circ \varphi_{q+1}:U_{q+1}\to U_{q+1},
\]
\[
\varphi_{q+1,a}' = \varphi_{q+1}\circ F(-,1) = G(-,1)^{-1}\circ \varphi_{q+1}:U_{q+1}\to \mathbb{R}^n.
\]
Then 
\[
\varphi_a'\circ (\varphi_{q+1,a}')^{-1} = \varphi_a'\circ \varphi_{q+1}^{-1}\circ G(-,1):\varphi_{q+1,a}'(U_{q+1}\cap U_a)\to \mathbb{R}^n
\]
is a $\mathcal{C}$ embedding on $\varphi_{q+1,a}'(\hat{U}_{q+1}'\cap \hat{U}_a')$. This shows that applying the isotopy $F_{q+1,a}$ on $U_{q+1}$ yields a coordinate map $\varphi_{q+1,a}'$ such that Condition (3) above holds for $(q+1,a)$. 

Define $F_{q+1}$ by applying the isotopies given by $F_{q+1,1},F_{q+1,2},\dots,F_{q+1,q}$ in order. Moreover, when $a>1$, we take $B=\varphi_{q+1}(\hat{U}_a\cap (\hat{U}_1\cup\dots\cup\hat{U}_{a-1}))$ when invoking Proposition \ref{prop_prop_C_smoothing_Cbar_embeddings} in the construction of $F_{q+1,a}$, so that the $\mathcal{C}$--smoothness properties from the previous steps are preserved after shrinking the domains. This defines the desired $F_{q+1}$. 

Since $\{U_a\}_{a\in\mathbb{Z}^+}$ is locally finite, each $\hat{U}_a$ is only shrunk  finitely many times in the construction of  $\{F_a\}_{a\in\mathbb{Z}^+}$. So this sequence satisfies the desired properties.

The set $\{(\hat U_a,\varphi'_a)\}$ gives a $\overline{\mathcal{C}}$ chart of $M$ whose transition functions are $\mathcal{C}$ embeddings. 
\end{proof}

A similar result establishes the uniqueness of the $\mathcal{C}$ structure associated with a $\overline{\mathcal{C}}$ manifold.
\begin{Theorem}
\label{thm_uniqueness_of_C_structure}
Suppose $f:M\to N$ is a $\overline{\mathcal{C}}$ homeomorphism of $\mathcal{C}$ manifolds, and suppose $M,N$ are endowed with distance functions that are compatible with their topologies. Then for each $\epsilon>0$, there exists an $\epsilon$--isotopy $F:M\times I\to N$, such that $F(-,0)=f$, $F(-,1)$ is a $\mathcal{C}$ homeomorphism, and $F(-,t)$ is a $\overline{\mathcal{C}}$ homeomorphism for each $t$.
\end{Theorem}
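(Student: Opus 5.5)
The plan is to mimic the inductive chart-by-chart argument in the proof of Theorem \ref{thm_construct_C_chart}, now modifying the map $f$ instead of the charts of a single manifold, and to apply Proposition \ref{prop_prop_C_smoothing_Cbar_embeddings} in local coordinates at each step. First, fix $\mathcal{C}$ atlases on $M$ and $N$. Choose countable, locally finite open covers $\{U_a\}$, $\{U_a'\}$ of $M$ with $U_a'\Subset U_a$, together with $\mathcal{C}$ charts $\varphi_a:U_a\to\mathbb{R}^n$, such that each $f(U_a)$ lies in the domain of a $\mathcal{C}$ chart $\psi_a:W_a\to\mathbb{R}^n$ of $N$. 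This is possible after shrinking the $U_a$, because $f$ is continuous. In these coordinates the local representative $f_a=\psi_a\circ f\circ\varphi_a^{-1}:\varphi_a(U_a)\to\mathbb{R}^n$ is a $\overline{\mathcal{C}}$ embedding, since $\overline{\mathcal{C}}$ is a pseudogroup containing $\mathcal{C}$. Fix a positive continuous function $\epsilon_a$ for each $a$ so that a small enough isotopy in each chart contributes less than $\epsilon/2^{a}$ in the metric of $N$, and also keeps $f(\cl U_a')$ inside $W_a$ along the way.

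Next, construct isotopies $F_a$ inductively, with $F=F_1$ followed by $F_2$, and so on. Suppose the current map $f_q$ (obtained from $f$ by the first $q$ isotopies) is already $\mathcal{C}$ on a neighbourhood $\hat U_1\cup\dots\cup\hat U_q$ of $\cl(U_1'\cup\dots\cup U_q')$. Apply Proposition \ref{prop_prop_C_smoothing_Cbar_embeddings} to $\psi_{q+1}\circ f_q\circ\varphi_{q+1}^{-1}$ on $U=\varphi_{q+1}(U_{q+1})$, with the following data:
\begin{itemize}
\item $A=\varphi_{q+1}(\hat U_{q+1})$ and $A'=\varphi_{q+1}(\hat U'_{q+1})$, where $U_{q+1}'\Subset\hat U_{q+1}'\Subset\hat U_{q+1}\Subset U_{q+1}$;
\item $B=\varphi_{q+1}(U_{q+1}\cap(\hat U_1\cup\dots\cup\hat U_q))$, on which the map is $\mathcal{C}$;
\item $B'\Subset B$ chosen so that it still covers the shrunken sets $\hat U'_a\cap U_{q+1}$.
\end{itemize}
Conjugating back by $\psi_{q+1}^{-1}$ gives an isotopy of $f_q$ that is compactly supported in $\hat U_{q+1}$. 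We then extend it by $f_q$ outside. Lemma \ref{lem_extend_emb_cpt_supported_difference} (applied in coordinates) shows that each intermediate map remains a homeomorphism onto $N$ and is still $\overline{\mathcal{C}}$, because being $\overline{\mathcal{C}}$ is local and each stage is $\overline{\mathcal{C}}$ inside the chart and equal to $f_q$ outside the support. After this step the map is $\mathcal{C}$ on $\hat U'_1\cup\dots\cup\hat U'_{q+1}$, after renaming the shrunken sets.

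By local finiteness, every point of $M$ is moved by only finitely many of the $F_a$, and each $\hat U_a$ is shrunk only finitely many times. So we can concatenate the isotopies, running $F_a$ during the time interval $[1-2^{-a+1},1-2^{-a}]$, and obtain a well-defined, continuous isotopy $F$ on $M\times I$. Its total displacement is less than $\sum_a\epsilon/2^a=\epsilon$. The map $F(-,1)$ is locally $\mathcal{C}$ on the cover $\{\hat U'_a\}\supset\{U_a'\}$, hence it is a $\mathcal{C}$ map. It is a homeomorphism because every stage is one and, near any point, only finitely many stages act. Its inverse is $\mathcal{C}$ by Lemma \ref{lem_inverse_C}. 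Every $F(-,t)$ is $\overline{\mathcal{C}}$ for the same local reason.

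I expect the main obstacle to be the bookkeeping of domains. One must choose $B'$, the $\hat U$'s and the smallness constants $\epsilon_a$ so that later steps neither destroy the $\mathcal{C}$ property gained earlier nor push $f(\cl U'_a)$ out of the target chart $W_a$. With noncompact $M$, this also requires $\epsilon$ to be a positive function, controlled chart by chart. These choices are all made exactly as in the proof of Theorem \ref{thm_construct_C_chart}; the analytic content is entirely supplied by Proposition \ref{prop_prop_C_smoothing_Cbar_embeddings}.
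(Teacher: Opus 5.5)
Your proposal is correct and follows essentially the same route as the paper. Both proceed by an inductive, chart-by-chart application of Proposition \ref{prop_prop_C_smoothing_Cbar_embeddings} in target charts containing $f(U_a)$, taking the already-smoothed region as $B$, shrinking the domains $U_a'\Subset\hat U_a\Subset U_a$ at each step, and using local finiteness to concatenate the isotopies. Your write-up is more detailed than the paper's brief sketch.

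One step deserves a line of its own: surjectivity of $F(-,1)$. It does follow from your $\epsilon/2^a$ bounds. Suppose the preimages $f_q^{-1}(z)$ of a fixed $z\in N$ changed infinitely often. Then local finiteness would force them to leave every compact set. At the same time, their distance under $f$ to $z$ is bounded by a tail of $\sum_a \epsilon/2^a$, which forces them to converge to $f^{-1}(z)$, a contradiction.
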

\begin{proof}[Proof, assuming Proposition \ref{prop_prop_C_smoothing_Cbar_embeddings}]
The argument is similar to the proof of Theorem \ref{thm_construct_C_chart}. 
Take a countable, locally finite, $\mathcal{C}$--chart $\{U_a\}_{a\in\mathbb{Z}^+}$ of $M$ and take a $\mathcal{C}$--chart $\{V_b\}_{b\in\mathbb{Z}^+}$ of $N$, such that each $f(U_a)$ is included in $V_b$ for some $b$. Take $\{U_a'\}_{a\in\mathbb{Z}^+}$ such that $U_a'\Subset U_a$ and $\{U_a'\}$ covers $M$. By Proposition \ref{prop_prop_C_smoothing_Cbar_embeddings}, one can isotope the map $f$ on each $U_a$, so that after shrinking $U_a$ to some $\hat{U}_a$ with $U_a'\Subset \hat{U}_a \Subset U_a$, the resulting map is a $\mathcal{C}$--embedding of $\hat{U}_a$ into $N$. One can also ensure that the isotopy does not affect the $\mathcal{C}$--smoothness from the previous steps, after possibly shrinking the domain. The resulting isotopy then satisfies the desired properties.
\end{proof}

\subsection{Local contractibility}

A crucial input into our arguments is the following local contractibility result of Sullivan.  We will need to both use it and prove a variant of it, so we will be revisiting the proof later.

\begin{Theorem}
[\cite{sullivan1979hyperbolic}, see also {\cite[Theorem 3.4]{tukia1981lipschitz}}]
\label{thm_contract_C_embedding}
Suppose $U$ is an open subset of $\mathbb{R}^n$, and $A,A',B,B'$ are open subsets of $U$ such that $A' \Subset A$, $B'\Subset B$. Then for each $\epsilon>0$, there exists $\delta>0$ such that the following holds. Let $\mathcal{P}$ be the set of elements $f\in \Emb(U;\mathbb{R}^n)$ such that $\|f|_A-\id|_A\|_{C^0}<\delta$ and $f|_B$ is a $\mathcal{C}$ embedding. Then there exists a map 
    \[
    \mathcal{F}: \mathcal{P}\times I \to \Emb(U;\mathbb{R}^n),
    \]
    such that the following statements hold for each $f\in \mathcal{P}$. 
    \begin{enumerate}
        \item $\mathcal{F}(f,-)$ is an $\epsilon$--isotopy.
        \item The isotopy $\mathcal{F}(f,-)$ is compactly supported in $A$, and $\mathcal{F}(f,0) = f$.
        \item $\mathcal{F}(f,1)=\id$ on $A'$.
        \item $\mathcal{F}(f,t)|_{B'}$ is a $\mathcal{C}$ embedding for all $t$. 
        \item $\mathcal{F}(\id,t)=\id$ for all $t$.
    \end{enumerate}
\end{Theorem}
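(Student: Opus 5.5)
This is Sullivan's local contractibility theorem for $\mathcal{C}$ embeddings near the identity, and I will reprove it by the Kirby torus trick. In Sullivan's version, the torus is replaced by a closed, almost parallelizable hyperbolic manifold $N$ of dimension $n$. The strategy is to build the contraction by a finite sequence of handle-by-handle straightenings. Each straightening is canonical in $f$, so the family varies continuously and the identity is fixed.

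First, I reduce to a single handle. I triangulate (or handle-decompose) a neighborhood of $\cl(A')$ inside $A$ with handles small relative to $\epsilon$. The key step is then a relative statement for one handle $B^k\times B^{n-k}$. Suppose $f$ is $\delta$-close to the inclusion on $B^k\times B^{n-k}(3)$, and $f$ is already the identity near $\partial B^k\times B^{n-k}(3)$ and $\mathcal{C}$ on a given open set. Then $f$ should be isotoped, canonically and continuously in $f$, by a small isotopy supported in a slightly shrunk handle. At the end it should equal the identity on $B^k\times B^{n-k}(1)$ and remain $\mathcal{C}$ wherever it was $\mathcal{C}$. Composing these isotopies over the handles in order of index gives (1)--(3). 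The shrinking radii are chosen so that the accumulated errors stay below $\epsilon$, and $\delta$ is taken small enough at each stage via uniform continuity of the previous steps.

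The single-handle step follows Sullivan's argument. Take $N^{k}$ hyperbolic and almost parallelizable, and immerse $N^k\setminus\{\text{pt}\}$ into $\mathbb{R}^k$ with bounded geometry. Pull back $f$ to an embedding of a neighborhood of $(N\setminus \mathrm{pt})\times B^{n-k}$, close to the identity, which is the identity near the puncture. Extend it across the puncture by the identity, so that it becomes a homeomorphism of $N\times B^{n-k}$ close to the identity near $N\times 0$. Lift to the universal cover $\mathbb{H}^k\times B^{n-k}$. There the $\mathcal{C}$ structure is controlled uniformly by the deck group, and Sullivan's key fact applies: a homeomorphism of $\mathbb{H}^k\times\mathbb{R}^{n-k}$ at bounded distance from the identity extends continuously to the identity at infinity. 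One then uses the ``Alexander trick'' at infinity combined with bounded-distance radial rescaling, i.e.\ conjugation by a Lipschitz/quasiconformal contraction of $\mathbb{H}^k$ toward a point. This produces an isotopy to the identity which is $\mathcal{C}$ whenever $f$ is, because the rescaling maps are themselves $\mathcal{C}$ with controlled constants. The construction only involves compositions, inversions and restrictions of $f$ with fixed $\mathcal{C}$ maps, and the pseudogroup property (Lemma \ref{lem_inverse_C}) keeps the $\mathcal{C}$ property on the corresponding subdomains, giving (4). Finally, I push the isotopy back down to $\mathbb{R}^k\times B^{n-k}$ and cut it off near the boundary using Lemma \ref{lem_extend_emb_cpt_supported_difference}.

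Continuity in $f$ and property (5) come for free from the construction. Every operation (pullback, extension by the identity, lift, conjugation, and restriction) is continuous in the $C^0$ topology by Lemma \ref{lem_inverse_top_embedding}, and each operation sends the identity to the identity. The main obstacle I expect is the controlled shrinking of domains in the single-handle step. The $\mathcal{C}$ property on $B'$ must be preserved through the immersion and lift without leaking out of $B'$, which is why the conclusion holds only on $B'\Subset B$. The bounded-distance-to-identity estimate on the hyperbolic cover must also hold uniformly in $\delta$. I would follow Tukia--V\"ais\"al\"a's detailed treatment \cite[Theorem 3.4]{tukia1981lipschitz} for these estimates.
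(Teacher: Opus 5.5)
Your reduction to single handles (small handles straightened in order of index, with every handle meeting $B'$ lying in a sub-handlebody contained in $B$) matches the paper. The gap is in the single-handle step, where you put the hyperbolic manifold $N^k$ in the \emph{core} factor $B^k$.

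That choice is what lets you make the pulled-back map the identity near the puncture. But it also means the trick accomplishes nothing. Nothing forces $f$ to be the identity near $B^k\times\partial B^{n-k}(3)$. So the pulled-back map is only an embedding of $N\times B^{n-k}(2)$ into $N\times\mathbb{R}^{n-k}$, not a homeomorphism of $N\times B^{n-k}$. After lifting, compactifying $\mathbb{H}^k$ at infinity merely reproduces the condition along $\partial B^k\times B^{n-k}$ that you started with. Producing a self-homeomorphism of $\mathbb{H}^k\times\mathbb{R}^{n-k}$ that agrees with $f$ near the core is the very extension problem to be solved.

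So you never obtain a homeomorphism of the handle that is the identity near its \emph{entire} boundary. Without one the Alexander trick does not apply: rescaling only in the $B^k$ or $\mathbb{H}^k$ direction does not converge to the identity along $\{0\}\times B^{n-k}$. What is needed is exactly Theorem \ref{thm_match_handle_in_C}: a homeomorphism $\psi(f)$ of the handle that is fixed near its whole boundary, equals $f$ near the core, is $\mathcal{C}$ when $f$ is, and is continuous in $f$ with $\psi(\id)=\id$. Then one sets $\mathcal{F}(f,t)=J_t(f)^{-1}\circ f$ with $J_t(f)(x)=t\hat\psi(f)(x/t)$.

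To build $\psi(f)$, the hyperbolic manifold must replace the \emph{cocore} factor. In your notation that means $N^{n-k}$ in place of $B^{n-k}$; in the paper, $Q^m$ replaces $B^m$ for the handle $B^m\times B^k$ with core $\{0\}\times B^k$. The pulled-back map on $B^k\times(N^{n-k}\setminus\mathring{D})$ is then the identity near $\partial B^k\times(N^{n-k}\setminus\mathring{D})$, but \emph{not} near the puncture. So it cannot be extended by the identity.

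Instead you need the canonical $\mathcal{C}$ Schoenflies theorem (Theorem \ref{thm_canonical_schoenflies}, after Gauld and V\"ais\"al\"a) to fill in over the deleted ball continuously in $f$. This is exactly where your assertion that continuity in $f$ and property (5) ``come for free'' breaks down. One then lifts to the universal cover and conjugates by the quasiconformal map $\mathscr{U}$ of Proposition \ref{prop_coordinate_change_Bk_times_Qm} onto $B^n\setminus\partial B^m$. Finally one extends by the identity across $\partial B^m$. The $\mathcal{C}$ property survives because the lift is uniformly $\mathcal{C}$ by deck invariance, and the extension across $\partial B^m$ stays $\mathcal{C}$: in the bi-Lipschitz case by Tukia--V\"ais\"al\"a's estimates, and in the quasiconformal case by removability of that sphere.
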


To prove Proposition~\ref{prop_prop_C_smoothing_Cbar_embeddings}, the idea is then to build on this to construct smoothings of $\overline{\mathcal{C}}$ embeddings to $\mathcal{C}$ embeddings. The overall idea is that if a $\overline{\mathcal{C}}$ embedding $f$ is $C^0$--close to $\id$, then the above local contractibility result yields an isotopy of $f$ to $\id$ (after possibly shrinking the domain), which is of course a $\mathcal{C}$ embedding. In general, we locally approximate $f$ in $C^0$ by 
$\mathcal{C}$ embeddings $g$, and apply Theorem \ref{thm_contract_C_embedding} to $g^{-1}\circ f$ on its natural domain. The difficulty is that to ensure compatibility, one should perturb step by step; however, after a perturbation, it is a priori not clear how the $\overline{\mathcal{C}}$ property is affected, due to the difficulty mentioned in the introduction that being globally $\overline{\mathcal{C}}$ is not clearly a local condition.  Resolving this is the content of the next section.

\section{$\overline{\mathcal{C}}$ implies globally $\overline{\mathcal{C}}$}
\label{sec_local_Cbar_to_global_Cbar}

We can now give the promised proof of Proposition~\ref{prop_prop_C_smoothing_Cbar_embeddings}; as we explained in the previous section, this implies the main theorems.  The proof of Proposition~\ref{prop_prop_C_smoothing_Cbar_embeddings} is the heart of the argument and will take up the remainder of the paper.   As we will see, the crux is to show
that if $f:U\to \mathbb{R}^n$ is a $\overline{\mathcal{C}}$ embedding and $V\Subset U$, then $f$ is globally $\overline{\mathcal{C}}$ on $V$.

The following concept will be helpful.  Recall that if $U\subset \mathbb{R}^n$ is an open subset and $F:U\times I \to \mathbb{R}^n$ is a compactly supported isotopy, then by Lemma \ref{lem_extend_emb_cpt_supported_difference}, we know that the images $F(U,t)$ are the same for all $t$. Let $V=F(U,t)$, then each $F(-,t)$ is a homeomorphism from $U$ to $V$. We introduce the following definition.

\begin{Definition}
\label{def_universal_Cbar_isotopy}
    Let $F:U\times I\to \mathbb{R}^n$ be a compactly supported isotopy, let $V=F(U,t)$. We say that $F$ is a \emph{universal $\overline{\mathcal{C}}$ isotopy}, if 
    \[
    F(-,t)\circ F(-,0)^{-1}:V\to V
    \]
    is globally $\overline{\mathcal{C}}$ on $V$ for all $t$. 
\end{Definition}

The concept of universal $\overline{\mathcal{C}}$ isotopy allows us to control global $\overline{\mathcal{C}}$ properties under the kind of local isotopies we will want to apply.

Our goal will then be to prove the following two results:

\begin{Proposition}
\label{prop_C_smoothing_in_global_Cbar_chart}
Suppose $U$ is an open subset of $\mathbb{R}^n$, and $A,A',B,B'$ are open subsets of $U$ such that $A' \Subset A$, $B'\Subset B$. Suppose $f:U\to \mathbb{R}^n$ is an embedding such that $f|_{B}$ is a $\mathcal{C}$ embedding and $f|_{A}$ is globally $\overline{\mathcal{C}}$.  Then for every $\epsilon>0$, there exists an $\epsilon$--isotopy $F:U\times I \to \mathbb{R}^n$, such that the following statements hold.
    \begin{enumerate}
        \item $F(-,0)=f$
        \item $F(-,1)$ is a $\mathcal{C}$ embedding on $A'\cup B'$.
        \item $F$ is compactly supported in $A$ and is a universal $\overline{\mathcal{C}}$ isotopy.
    \end{enumerate}
\end{Proposition}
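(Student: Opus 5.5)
The plan is to reduce to the local contractibility Theorem \ref{thm_contract_C_embedding}, using that $f|_A$ is a global $C^0$--limit of $\mathcal{C}$ embeddings.

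\textbf{Setup.} Choose open sets $A'\Subset A_1\Subset A_2\Subset A$. Since $f|_A$ is globally $\overline{\mathcal{C}}$, pick a $\mathcal{C}$ embedding $g:A\to\mathbb{R}^n$ with $\|g-f|_A\|_{C^0}$ very small. Set $W=g(A)$ and $h=g^{-1}\circ f$, defined near $A_2$; by Lemma \ref{lem_inverse_top_embedding} this makes sense on $A_2$ once $g$ is close enough. We have $h\in\Emb(A_2;\mathbb{R}^n)$, and $h$ is $C^0$--close to $\id$ on $A_1$ by uniform continuity of $g^{-1}$ on compact sets (again Lemma \ref{lem_inverse_top_embedding}). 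By Lemma \ref{lem_inverse_C}, $h|_{B\cap A_2}$ is a $\mathcal{C}$ embedding. Apply Theorem \ref{thm_contract_C_embedding} to $h$ with the data $U\leadsto A_2$, $A\leadsto A_1$, $A'\leadsto A'$, $B\leadsto B\cap A_2$, and $B'$ shrunk to an intermediate set $B''$ with $B'\Subset B''\Subset B$, for a small $\epsilon'$. The output isotopy $H_t$ is compactly supported in $A_1$, starts at $h$, ends at $\id$ on $A'$, and is $\mathcal{C}$ on $B''\cap A_2$.

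\textbf{Defining $F$.} Set $F(-,t)=g\circ H_t$ on $A_2$ and $F(-,t)=f$ elsewhere. Since $H_t=h$ near $\partial A_1$, we have $g\circ H_t=f$ there, so Lemma \ref{lem_extend_emb_cpt_supported_difference} shows $F(-,t)$ is an embedding of $U$. Then:
\begin{itemize}
\item $F(-,0)=f$.
\item $F(-,1)=g$ on $A'$, hence $\mathcal{C}$ there.
\item On $B'$, either we are outside $A_1$ and $F=f$ is $\mathcal{C}$, or $F=g\circ H_1$ is $\mathcal{C}$.
\item $F$ is an $\epsilon$--isotopy, because $g$ is uniformly continuous on $\cl(A_1)$ and $H$ is an $\epsilon'$--isotopy.
\end{itemize}
This requires choosing $\epsilon'$ and then $g$ (so that $\delta$ is met) in the right order. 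The order works because $\delta$ depends only on the fixed sets and $\epsilon'$.

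\textbf{Universality, the main obstacle.} We must show $\Phi_t=F(-,t)\circ f^{-1}$ is globally $\overline{\mathcal{C}}$ on $V=f(U)$. On $f(A_2)$ we have
\[
\Phi_t=g\circ H_t\circ h^{-1}\circ g^{-1},
\]
and $\Phi_t=\id$ outside $f(\text{supp})$. The trouble is that $H_t$ and $h$ are only topological.

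To handle this, approximate $f|_A$ by a sequence of $\mathcal{C}$ embeddings $g_k\to f$, and exploit property (5) and the continuity of $\mathcal{F}$ in Theorem \ref{thm_contract_C_embedding}. Define
\[
\Phi_t^k=g\circ\mathcal{F}(g^{-1}\circ g_k,t)\circ g_k^{-1},
\]
extended by the identity. Because $g^{-1}\circ g_k$ is a $\mathcal{C}$ embedding on all of $A_2$, we can take $B=A_2$ in the theorem. But this changes $\mathcal{P}$, so instead I would use a single $\mathcal{F}$ built with $B=B\cap A_2$ and verify from Sullivan's construction that it preserves $\mathcal{C}$ on any additional region where the input is $\mathcal{C}$. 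This requires a strengthened, relative form of Theorem \ref{thm_contract_C_embedding}, namely the ``variant'' the paper promises to prove. With that in hand, each $\Phi_t^k$ is a compactly supported $\mathcal{C}$ homeomorphism of $V$ (Lemma \ref{lem_extend_cpt_supported_homeo}). As $k\to\infty$, continuity of $\mathcal{F}$ and Lemma \ref{lem_inverse_top_embedding} give $\Phi_t^k\to\Phi_t$ in $C^0$, and supports stay inside a fixed compact set, so the convergence is global on $V$.

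I expect this continuity-plus-naturality property of the contraction to be the crux: it needs $\mathcal{F}(f,t)$ to be $\mathcal{C}$ wherever $f$ is, uniformly over the parameter space.
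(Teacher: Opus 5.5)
Your proof is correct, but the universality step takes a different route from the paper's. The setup and the definition of $F$ (extend $g\circ H_t$ by $f$) are the same as in the paper.

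The paper gets universality from its refinement Proposition \ref{prop_match_global_contractibility}: the contraction of a globally $\overline{\mathcal{C}}$ input is itself a universal $\overline{\mathcal{C}}$ isotopy. It then transports this through $g_i$ using Lemmas \ref{lem_coord_change_universal_Cbar_isotopy} and \ref{lem_universal_Cbar_extend_domain}. That refinement is proved handle by handle. First one shows $\psi(f)$ is globally $\overline{\mathcal{C}}$ (Proposition \ref{prop_match_global_Cbar_handle}). This needs a rescaling and Lemma \ref{lem_C_approx_id}, so that the $\mathcal{C}$ approximants obey the boundary condition built into the handle-level $\mathcal{P}$. Then one uses $\mathcal{F}(f,t)\circ\mathcal{F}(f,0)^{-1}=J_t(\psi(f))^{-1}$ and inducts over handles.

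You instead apply the principle ``continuous and $\mathcal{C}$-preserving implies preserving global $C^0$--limits of $\mathcal{C}$ embeddings'' once, globally, to $u\mapsto \mathcal{F}(u,t)\circ u^{-1}$. The contraction's $\mathcal{P}$ imposes no boundary condition, so the approximants $g^{-1}\circ g_k$ lie in it automatically. The only extra input is that an input which is $\mathcal{C}$ on all of $A_2$ yields an isotopy that is $\mathcal{C}$ on all of $A_2$. This follows directly from the construction. Each handle step post-composes with a conjugate of $J_t(\psi(\cdot))^{-1}$, which is $\mathcal{C}$ whenever the current embedding is, by Theorem \ref{thm_match_handle_in_C}(3) and Lemma \ref{lem_inverse_C}. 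Equivalently, the argument for item (4) of Theorem \ref{thm_contract_C_embedding} applies to the same handlebody with $B=A_2$, since every handle lies in $A_1$. You need this only for the whole domain, so the shrinking issue hidden in your phrase ``any additional region'' never arises.

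This naturality is not the variant the paper proves. In effect, your argument is a shorter proof of Proposition \ref{prop_match_global_contractibility} in the case at hand, and it treats $\mathcal{F}$ essentially as a continuous black box. The paper's handle-level statement is more structural.

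One small repair is needed in the $\epsilon$--isotopy step. Uniform continuity of $g$ does not suffice there, because $g$ is chosen after $\epsilon'$. Instead, take $\epsilon'$ from the uniform continuity of $f$ on a compact neighbourhood of $\cl(A_1)$, and require $\|g-f\|_{C^0}<\epsilon/3$, as the paper does.
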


\begin{Theorem}
\label{thm_local_Cbar_to_global_Cbar}
Suppose $V\Subset U$ are open subsets of $\mathbb{R}^n$.
    If $f$ is $\overline{\mathcal{C}}$ on $U$, then $f$ is globally $\overline{\mathcal{C}}$ on $V$.
\end{Theorem}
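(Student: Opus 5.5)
The plan is to deduce the theorem from Proposition \ref{prop_C_smoothing_in_global_Cbar_chart} by a finite induction over a cover of $\cl(V)$, using universal $\overline{\mathcal{C}}$ isotopies to keep the global $\overline{\mathcal{C}}$ hypothesis alive from step to step. Since $f$ is $\overline{\mathcal{C}}$ on $U$ and $\cl(V)$ is compact, I would choose finitely many open sets $U_1,\dots,U_k\subset U$ on which $f$ is globally $\overline{\mathcal{C}}$, whose union contains $\cl(V)$. I would then choose nested shrinkings $U_i^{(k)}\Subset\cdots\Subset U_i^{(1)}\Subset U_i^{(0)}=U_i$ such that the smallest ones $U_i^{(k)}$ still cover $\cl(V)$. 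Fix $\epsilon>0$. It suffices to produce an embedding $f_k$ that is $\epsilon$--close to $f$ and is a $\mathcal{C}$ embedding on a neighbourhood of $\cl(V)$. Then Lemma \ref{lem_local_C_implies_global_C} shows $f_k|_V$ is $\mathcal{C}$, and letting $\epsilon\to 0$ shows that $f|_V$ is globally $\overline{\mathcal{C}}$.

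For the induction, set $f_0=f$. Suppose $f_{i-1}=h_{i-1}\circ f$, where $h_{i-1}:f(U)\to f(U)$ is compactly supported and globally $\overline{\mathcal{C}}$ on $f(U)$, and $f_{i-1}$ is $\mathcal{C}$ on $W_{i-1}=U_1^{(i-1)}\cup\dots\cup U_{i-1}^{(i-1)}$. I would apply Proposition \ref{prop_C_smoothing_in_global_Cbar_chart} to $f_{i-1}$ with
\[
A=U_i^{(i-1)},\quad A'=U_i^{(i)},\quad B=W_{i-1},\quad B'=U_1^{(i)}\cup\dots\cup U_{i-1}^{(i)},
\]
and with tolerance $\epsilon/k$. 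The resulting isotopy $F$ is compactly supported in $A$ and universal $\overline{\mathcal{C}}$. So $f_i=F(-,1)$ satisfies $f_i=g\circ f_{i-1}$ with $g=F(-,1)\circ F(-,0)^{-1}$ globally $\overline{\mathcal{C}}$ on $f(U)$, and $f_i$ is $\mathcal{C}$ on $W_i$.

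To close the induction I must check that $h_i=g\circ h_{i-1}$ is again globally $\overline{\mathcal{C}}$. I must also check the input of the proposition at each step, namely that $f_{i-1}|_{U_i^{(i-1)}}=h_{i-1}\circ f|_{U_i^{(i-1)}}$ is globally $\overline{\mathcal{C}}$. Both reduce to a composition lemma. Suppose $\phi$ is globally $\overline{\mathcal{C}}$ on an open set $D$, $\psi$ is globally $\overline{\mathcal{C}}$ on an open set containing $\phi(D)$, and $D'\Subset D$. Then $\psi\circ\phi$ is globally $\overline{\mathcal{C}}$ on $D'$.

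To prove the composition lemma, take $\mathcal{C}$ approximations $\phi_j\to\phi$ and $\psi_j\to\psi$. The sets $\phi_j(\cl D')$ eventually lie in a fixed compact neighbourhood of $\phi(\cl D')$ inside the domain of $\psi$. Then
\[
\psi_j\circ\phi_j-\psi\circ\phi=(\psi_j-\psi)\circ\phi_j+(\psi\circ\phi_j-\psi\circ\phi),
\]
and both terms go to $0$ by uniform convergence together with uniform continuity of $\psi$ on that compact set. The composites $\psi_j\circ\phi_j$ are $\mathcal{C}$ because $\mathcal{C}$ forms a pseudo group.

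For $h_i=g\circ h_{i-1}$ there is a subtlety. Shrinking the domain would lose the global statement on all of $f(U)$. I would avoid this by using compact support: outside a compact set $g$ and $h_{i-1}$ are the identity. So I would only need approximants that agree with the identity near the boundary of a compact region, and then extend them by the identity using Lemma \ref{lem_extend_emb_cpt_supported_difference}. Alternatively, it suffices to run the whole argument inside a fixed $U''$ with $V\Subset U''\Subset U$ and to lose a shrinking at each of the finitely many steps, which is harmless.

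I expect the main obstacle to be exactly this bookkeeping: keeping every hypothesis of Proposition \ref{prop_C_smoothing_in_global_Cbar_chart} valid at each step, given that global $\overline{\mathcal{C}}$ only survives composition after shrinking. I would handle it by having all compositions take place on the nested shrinkings $U_i^{(j)}$ fixed in advance. The real analytic content is in Proposition \ref{prop_C_smoothing_in_global_Cbar_chart}, which is assumed here. Finally, the total displacement of $f_k$ from $f$ is at most $k\cdot\epsilon/k=\epsilon$, and $f_k$ is $\mathcal{C}$ on $W_k\supset\cl(V)$, which completes the argument.
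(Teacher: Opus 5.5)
Your proposal is correct and follows essentially the same route as the paper: a finite cover of $\cl(V)$, an induction that applies Proposition \ref{prop_C_smoothing_in_global_Cbar_chart} with $B$ the union of the previously smoothed sets, and universal $\overline{\mathcal{C}}$ isotopies that keep the global $\overline{\mathcal{C}}$ hypothesis alive from step to step. The only difference is bookkeeping: the paper concatenates the isotopies using Lemmas \ref{lem_compose_cptly_supported_isotopy} and \ref{lem_extend_universal_Cbar_isotopy}, which rely on Corollary \ref{cor_extend_cpt_supported_Cbar_homeo} to avoid any shrinking, whereas your alternative of tracking the composite $h_i$ with an elementary composition lemma and losing one shrinking per step is also valid because there are only finitely many steps.
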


These clearly imply Proposition~\ref{prop_prop_C_smoothing_Cbar_embeddings}, by the following simple argument:
\begin{proof}[Proof of Proposition~\ref{prop_prop_C_smoothing_Cbar_embeddings}, assuming Theorem~\ref{thm_local_Cbar_to_global_Cbar} and Proposition~\ref{prop_C_smoothing_in_global_Cbar_chart}]
Let $U$, $A$, $A'$, $B$, $B'$, $f$ be as in Proposition~\ref{prop_prop_C_smoothing_Cbar_embeddings}.
After shrinking $A$, we may assume without loss of generality that $A\Subset U$.
By Theorem \ref{thm_local_Cbar_to_global_Cbar}, $f$ is globally $\overline{\mathcal{C}}$ on $A$. 
By Proposition \ref{prop_C_smoothing_in_global_Cbar_chart}, for each $\epsilon>0$, there exists an $\epsilon$--isotopy $F:U\times I\to \mathbb{R}^n$ such that $F(-,0)=f$, $F(-,1)$ is a $\mathcal{C}$ embedding on $A'\cup B'$, $F$ is compactly supported in $A$, and $F$ is a universal $\overline{\mathcal{C}}$ isotopy. Since $f$ is a $\overline{\mathcal{C}}$ embedding of $U$ and $F$ is a universal $\overline{\mathcal{C}}$ isotopy, we know that $F(-,t)$ is a $\overline{\mathcal{C}}$ embedding for each $t$. So $F$ satisfies the desired properties.
\end{proof}

\subsection{Properties of universal $\overline{\mathcal{C}}$ isotopies}

To proceed, we first collect some basic properties of $\overline{\mathcal{C}}$ isotopies.

The first lemma states that universal $\overline{\mathcal{C}}$ isotopies are invariant under globally $\overline{\mathcal{C}}$ coordinate changes.  

\begin{Lemma}
\label{lem_coord_change_universal_Cbar_isotopy}
    Suppose $U,V,W$ are open subsets of $\mathbb{R}^n$, and $h:V\to W$ is a globally $\overline{\mathcal{C}}$ homeomorphism. Suppose $F:U\times I\to V$ is a universal $\overline{\mathcal{C}}$ isotopy, then $F(U,t)$ is independent of $t$, and we further assume that $F(U,t)\Subset V$. Let $G:U\times I\to W$ be defined by $G=h\circ F$. Then $G$ is also a universal $\overline{\mathcal{C}}$ isotopy.
\end{Lemma}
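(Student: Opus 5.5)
The plan is to check the definition directly and reduce everything to a diagonal approximation argument. Write $V' = F(U,t)$, which is independent of $t$ by Lemma \ref{lem_extend_emb_cpt_supported_difference}, and $\Phi_t = F(-,t)\circ F(-,0)^{-1}: V'\to V'$. Then $G$ is clearly a compactly supported isotopy with $G(U,t) = h(V')$, and
\[
G(-,t)\circ G(-,0)^{-1} = h\circ \Phi_t\circ h^{-1} : h(V')\to h(V').
\]
So it suffices to show that $h\circ\Phi_t\circ h^{-1}$ is globally $\overline{\mathcal{C}}$ on $h(V')$. The data are the following.
\begin{enumerate}
\item $\Phi_t$ is the identity outside the compact set $K = F(\supp F, 0)\subset V'$.
\item $\Phi_t$ is a $C^0$--limit of $\mathcal{C}$ embeddings $\varphi_i$ of $V'$.
\item $h$ is a $C^0$--limit of $\mathcal{C}$ embeddings $h_j$ of $V$.
\end{enumerate}

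\textbf{Step 1: make the approximants compactly supported.} The approximants $\varphi_i$ of $\Phi_t$ need not be the identity anywhere, and this causes domain problems when conjugating by $h_j$. To fix it, choose open sets $K\subset W_1\Subset W_2\Subset W_3 \Subset V'$. Set
\[
A = W_3\setminus K,\qquad A' = W_2\setminus \cl(W_1),\qquad B = V',
\]
and let $B'\Subset V'$ contain $\cl(W_3)$. For large $i$, $\varphi_i$ is $\delta$--close to $\Phi_t = \id$ on $A$. Apply Theorem \ref{thm_contract_C_embedding} with $\epsilon_i\to 0$. This gives an $\epsilon_i$--isotopy of $\varphi_i$, compactly supported in $A$, ending at an embedding $\varphi_i''$ which is the identity on $A'$ and $\mathcal{C}$ on $B'$.

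Now define $\varphi_i'$ to be $\varphi_i''$ on $W_1$ and the identity elsewhere. Since $\varphi_i''$ is the identity on the collar $A'$, the map $\varphi_i'$ is a compactly supported $\mathcal{C}$ embedding. By Lemma \ref{lem_extend_cpt_supported_homeo} it extends by the identity to a $\mathcal{C}$ homeomorphism of $\mathbb{R}^n$, which maps $W_2$ to itself. Moreover $\varphi_i'\to\Phi_t$ uniformly, because on $W_1$ we have $|\varphi_i'-\varphi_i|<\epsilon_i$, and outside $W_1$ both $\varphi_i'$ and $\Phi_t$ are the identity.

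\textbf{Step 2: conjugate.} Since $V'\Subset V$, we have $\cl h(V')\subset h(V)$. By Lemma \ref{lem_inverse_top_embedding}, applied with a slightly larger set $h(V'')$ where $V'\Subset V''\Subset V$:
\begin{enumerate}
\item $h(V'')\subset h_j(V)$ for large $j$;
\item $h_j^{-1}\to h^{-1}$ uniformly on $h(V'')$;
\item hence $h_j^{-1}(h(V'))\subset V''$ for large $j$.
\end{enumerate}
Define $\psi_j = h_j\circ\varphi_j'\circ h_j^{-1}$ on $h(V')$. This is a composition in the pseudogroup of $\mathcal{C}$ embeddings, with $\varphi_j'$ understood as a homeomorphism of $\mathbb{R}^n$ preserving $V$. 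So $\psi_j$ is a $\mathcal{C}$ embedding of $h(V')$ into $\mathbb{R}^n$.

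\textbf{Step 3: convergence.} This is the main technical point, since both inner and outer maps vary with $j$. Fix a compact neighborhood $L\subset V$ of $\cl V''$. All the points $h_j^{-1}(y)$ and $\varphi_j'(h_j^{-1}(y))$ for $y\in h(V')$ lie in $L$ for large $j$, because $\varphi_j'$ preserves $W_2$ and is the identity off it. On $L$ the maps $h$ and $\Phi_t$ (extended by the identity) are uniformly continuous. Now estimate
\[
|\psi_j - h\Phi_t h^{-1}| \le |h_j\varphi_j'h_j^{-1} - h\varphi_j'h_j^{-1}| + |h\varphi_j'h_j^{-1} - h\Phi_t h_j^{-1}| + |h\Phi_t h_j^{-1} - h\Phi_t h^{-1}|.
\]
The first term is bounded by $\|h_j-h\|_{C^0(L)}$. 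The second is controlled by the uniform continuity of $h$ together with $\varphi_j'\to\Phi_t$. The third is controlled by the uniform continuity of $h\Phi_t$ on $L$ together with $h_j^{-1}\to h^{-1}$ on $h(V')$. All three tend to $0$, so $\psi_j\to h\circ\Phi_t\circ h^{-1}$ in $C^0$ on $h(V')$. Hence $G$ is a universal $\overline{\mathcal{C}}$ isotopy.

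The main obstacle is Step 1: the compact-support modification of the $\varphi_i$ via Theorem \ref{thm_contract_C_embedding}. Without it, $h_j^{-1}(h(V'))$ need not lie in the domain $V'$ of $\varphi_i$, and the conjugate $h_j\circ\varphi_i\circ h_j^{-1}$ would not even be defined on $h(V')$.
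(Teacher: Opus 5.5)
Your proof is correct and follows the same route as the paper. The paper also reduces to $G(-,t)\circ G(-,0)^{-1}=h\circ\Phi_t\circ h^{-1}$ and then simply invokes the observation that compositions of globally $\overline{\mathcal{C}}$ embeddings of compact sets are globally $\overline{\mathcal{C}}$. Your Steps 2--3 are the diagonal argument behind that observation.

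The real difference is your Step 1, which covers a point the paper's one-line proof passes over. To compose with $h^{-1}$ you need approximants of $\Phi_t$ defined on a neighborhood of $\cl(V')$, not just on $V'$. Producing them requires a cut-off via Theorem \ref{thm_contract_C_embedding}. This is exactly Lemma \ref{lem_C_approx_id} together with Corollary \ref{cor_extend_cpt_supported_Cbar_homeo}(1). The paper proves these right after this lemma and independently of it, so you could cite them instead of redoing the construction.

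In the paper's only use of the lemma (the proof of Proposition \ref{prop_C_smoothing_in_global_Cbar_chart}), $h=g_i$ is itself a $\mathcal{C}$ embedding. There $g_i\circ\varphi_j\circ g_i^{-1}$ is already defined on $g_i(V')$ for large $j$, so no cut-off is needed. Your version buys the lemma in the full generality stated, with $h$ only globally $\overline{\mathcal{C}}$.

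One small repair in Step 1: take $\varphi_i'$ to be the extension of $\varphi_i''|_{W_2}$ by the identity, rather than cutting off along $W_1$. If $W_1$ is irregular (e.g.\ a ball minus a slit), points of $\partial W_1$ need not lie in the closure of the collar $A'$. At such points $\varphi_i''$ need not be the identity, so your $\varphi_i'$ could fail to be continuous. Cutting along $W_2$ works because $\varphi_i''=\id$ on $W_2\setminus\cl(W_1)$ and $\cl(W_1)\subset W_2$. The resulting map is the identity off $\cl(W_1)$ and is a $\mathcal{C}$ homeomorphism of $\mathbb{R}^n$ preserving $W_2$, by Lemma \ref{lem_extend_cpt_supported_homeo}.
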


\begin{proof}
    This follows immediately from the definitions and the observation that compositions of globally $\overline{\mathcal{C}}$ embeddings of compact subsets of $\mathbb{R}^n$ are also globally $\overline{\mathcal{C}}$. 
\end{proof}

Before proceeding to the further properties we will need, we establish the following two preliminaries.

\begin{Lemma}
\label{lem_C_approx_id}
    Suppose $U\subset \mathbb{R}^n$ is open, and $f:U\to \mathbb{R}^n$ is a globally $\overline{\mathcal{C}}$ embedding. Suppose $A\Subset U\setminus \supp f$. Then $f$ can be written as the $C^0$--limit of a sequence of $\mathcal{C}$ embeddings of $U$ that are equal to the identity on $A$. 
\end{Lemma}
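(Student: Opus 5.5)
Fix a sequence of $\mathcal{C}$ embeddings $g_i:U\to\mathbb{R}^n$ with $g_i\to f$ in $C^0$. We must replace each $g_i$ by a $\mathcal{C}$ embedding $\hat g_i$ of $U$ that equals $\id$ on $A$ and still converges to $f$. The plan is to correct $g_i$ near $A$ using Sullivan's local contractibility (Theorem \ref{thm_contract_C_embedding}), and then glue with Lemma \ref{lem_extend_emb_cpt_supported_difference}.

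First choose open sets $A\Subset A_1\Subset A_2\Subset U\setminus\supp f$. Since $f=\id$ on $A_2$, we have $\|g_i|_{A_2}-\id|_{A_2}\|_{C^0}\to 0$. Apply Theorem \ref{thm_contract_C_embedding} with ambient domain $U$, the pair $A_1\Subset A_2$ as its ``$A'\Subset A$'', and $B=B'=\emptyset$. This applies because $U\setminus\supp f$ is open, so $A_2\Subset U$. For each $\epsilon>0$ we get a $\delta>0$, and for $i$ large, $g_i\in\mathcal{P}$. Here we use the version of the theorem where membership in $\mathcal{P}$ requires only $C^0$-closeness on $A_2$, and the $\mathcal{C}$ condition is imposed only on the empty set $B$.

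The theorem gives an $\epsilon$-isotopy $\mathcal{F}(g_i,-)$, compactly supported in $A_2$, from $g_i$ to an embedding $k_i=\mathcal{F}(g_i,1)$ with $k_i=\id$ on $A_1$. The trouble is that with $B=\emptyset$, $k_i$ need not be $\mathcal{C}$ on $A_2\setminus A_1$. To fix this, take $B=U$ instead, since $g_i$ is a $\mathcal{C}$ embedding on all of $U$, and choose $B'\Subset U$ containing $\cl(A_2)$. Conclusion (4) then says $k_i|_{B'}$ is $\mathcal{C}$. Off the support, which is a compact subset of $A_2$, $k_i=g_i$ is $\mathcal{C}$. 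Hence $k_i$ is locally $\mathcal{C}$ on all of $U$, i.e.\ a $\mathcal{C}$ embedding, because being $\mathcal{C}$ is a local condition in the pseudogroup. One could alternatively glue $k_i|_{A_2}$ to $g_i$ via Lemma \ref{lem_extend_emb_cpt_supported_difference}(3).

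Next, $k_i=\id$ on $A_1\supset A$, and $\|k_i-g_i\|_{C^0}<\epsilon$. Letting $\epsilon=\epsilon_j\to 0$ and choosing $i_j$ large enough that $g_{i_j}\in\mathcal{P}_{\epsilon_j}$ and $\|g_{i_j}-f\|<\epsilon_j$, the embeddings $k_{i_j}$ are $\mathcal{C}$ embeddings of $U$, equal the identity on $A$, and converge to $f$ in $C^0$.

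The main point to check is the bookkeeping in Theorem \ref{thm_contract_C_embedding}. Its hypothesis asks only for $C^0$-closeness to $\id$ on the smaller set ($A_2$ in our notation), not on all of $U$; this is exactly what lets it apply even though $g_i$ is far from $\id$ elsewhere. The other thing to verify is that the $B'$ produced above indeed contains the support of the isotopy, so that the $\mathcal{C}$ property of $k_i$ is guaranteed everywhere.
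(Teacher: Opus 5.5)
Your proposal is correct and follows the paper's argument: apply Theorem \ref{thm_contract_C_embedding} to the approximating $\mathcal{C}$ embeddings on a neighborhood of $A$ where $f=\id$, choosing the $(B,B')$ slots so that $B'$ contains the support (so the straightened maps stay $\mathcal{C}$ and glue to the originals), and then pass to a diagonal subsequence. The only differences are cosmetic: the paper uses $(A,A')=(V',A)$ and $(B,B')=(V,V')$ with $A\Subset V'\Subset V\Subset U\setminus\supp f$, whereas you add an extra intermediate set and take $B=U$.
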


\begin{proof}
    Let $V$, $V'$ be open subsets of $U$ such that $A\Subset V'\Subset V\Subset U\setminus \supp f$. Let $f_i:U\to \mathbb{R}^n$ be a sequence of $\mathcal{C}$ embeddings that converges to $f$ in $C^0$. Then $f_i|_{V}:V\to \mathbb{R}^n$ is a sequence of $\mathcal{C}$ embeddings that converges to $\id$. Applying Theorem \ref{thm_contract_C_embedding} with $(A,A') = (V',A)$ and $(B,B')=(V,V')$, we know that for each integer $m>0$, there exists $i_m$, such that for all $i>i_m$, there exists an $(1/m)$--isotopy from $f_i$ to some $g_i$, such that the isotopy is compactly supported in $V'$, and $g_i$ is a $\mathcal{C}$ embedding that equals the identity on 
    $A$.
    Let $j_m$ be an index such that $j_m>i_m$ and $\|f_{j_m} - f\|_{C^0(U)}<1/m$. Then $\{{g}_{j_m}\}_{m\ge 1}$ is a sequence of $\mathcal{C}$ embeddings that are equal to the identity on $A$ such that $\|{g}_{j_m}-f\|_{C^0(U)}<2/m$. Hence the result is proved. 
\end{proof}

\begin{Corollary}
\label{cor_extend_cpt_supported_Cbar_homeo}
    Suppose $U\subset \mathbb{R}^n$ is open, $f$ is a globally $\overline{\mathcal{C}}$ embedding of $U$ in $\mathbb{R}^n$ with compact support. Let $\hat{f}:\mathbb{R}^n\to \mathbb{R}^n$ be the extension of $f$ to $\mathbb{R}^n$ by the identity. Then:
    \begin{enumerate}
        \item $\hat{f}$ is globally $\overline{\mathcal{C}}$.
        \item $\hat{f}^{-1}$ is globally $\overline{\mathcal{C}}$.
    \end{enumerate}
\end{Corollary}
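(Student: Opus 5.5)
Let $K=\supp f$, which is a compact subset of $U$. The plan is to use Lemma \ref{lem_C_approx_id} to produce approximating $\mathcal{C}$ embeddings that already equal the identity near the boundary region of $U$, extend them by the identity using Lemma \ref{lem_extend_emb_cpt_supported_difference}, and then invert.

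\textbf{Step 1: setting up the approximations.} Choose open sets $W'\Subset W\Subset U$ with $K\subset W'$. The set $A:=W\setminus \cl(W')$ satisfies $A\Subset U\setminus K$. Lemma \ref{lem_C_approx_id} then gives $\mathcal{C}$ embeddings $f_i:U\to\mathbb{R}^n$ with $f_i\to f$ in $C^0(U)$ and $f_i=\id$ on $A$.

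\textbf{Step 2: extending by the identity.} Let $\hat f_i$ be the extension of $f_i|_W$ to $\mathbb{R}^n$ by $\id$. Since $f_i|_{W\setminus \cl(W')}=\id$, Lemma \ref{lem_extend_emb_cpt_supported_difference}, applied with $(V',V,U)$ replaced by $(W',W,\mathbb{R}^n)$ and using a slightly larger $W'$ if needed, shows the following:
\begin{itemize}
\item each $\hat f_i$ is an embedding of $\mathbb{R}^n$ with $\hat f_i(W)=W$, hence a homeomorphism of $\mathbb{R}^n$;
\item each $\hat f_i$ is a $\mathcal{C}$ embedding, because it is locally either $f_i$ or the identity.
\end{itemize}
On $W$ we have $|\hat f_i-\hat f|=|f_i-f|$, and off $W$ both maps are the identity. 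Hence $\hat f_i\to\hat f$ uniformly on $\mathbb{R}^n$, which proves (1).

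\textbf{Step 3: the inverse.} By Lemma \ref{lem_extend_cpt_supported_homeo}, $\hat f$ is a homeomorphism, and by Lemma \ref{lem_inverse_C} each $\hat f_i^{-1}$ is a $\mathcal{C}$ homeomorphism of $\mathbb{R}^n$. It remains to show $\hat f_i^{-1}\to \hat f^{-1}$ uniformly.
\begin{itemize}
\item Outside $W$, both $\hat f_i^{-1}$ and $\hat f^{-1}$ are the identity, because $\hat f_i(W)=W=\hat f(W)$.
\item On $W$, note that $\cl(W)$ is compact. Apply Lemma \ref{lem_inverse_top_embedding}(2) to the embeddings $\hat f_i,\hat f$ of a bounded open set $W''\Supset W$ and to the set $V=W\Subset \hat f(W'')$. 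This gives uniform convergence on $W$.
\end{itemize}
Together these give uniform convergence on all of $\mathbb{R}^n$, which proves (2).

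\textbf{Main obstacle.} The only real issue is that $C^0$ convergence on the noncompact space $\mathbb{R}^n$ requires uniform control everywhere. Lemma \ref{lem_C_approx_id} is what supplies this: it makes every approximant agree exactly with the identity outside the compact set $\cl(W)$, so only a compact region needs uniform estimates.
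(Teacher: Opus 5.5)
Your proof is correct and is essentially the paper's argument: part (1) is the same construction, using Lemma \ref{lem_C_approx_id} to make the approximants equal to the identity on a collar $W\setminus\cl(W')$ of the support and then extending by the identity. In part (2) you take a small shortcut. Since the same $\hat f_i$ preserve $W$ and are the identity off $W$, so are their inverses, and Lemma \ref{lem_inverse_top_embedding}(2) on $W$ already gives uniform convergence on all of $\mathbb{R}^n$; the paper instead uses Lemma \ref{lem_inverse_top_embedding} only to get approximation on compact sets and then re-applies part (1) to the compactly supported $\hat f^{-1}$.
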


\begin{proof}
    Let $V',V$ be open subsets of $U$ such that $\supp f\subset V'\Subset V\Subset U$. Then by Lemma \ref{lem_C_approx_id}, there exists a sequence of embeddings $f_i:U\to \mathbb{R}^n$ such that $f_i\to f$ and $f_i=\id$ on $V\setminus V'$. Let $\hat{f}_i$ be the extension of $f_i|_{V}$ to $\mathbb{R}^n$ by the identity, then by Lemma \ref{lem_extend_cpt_supported_homeo}, the maps $\hat{f}_i$ are $\mathcal{C}$ homeomorphisms of $\mathbb{R}^n$ that $C^0$--converge to $\hat f$. This proves Part (1). By Lemma \ref{lem_inverse_top_embedding}, we know that $\hat{f}^{-1}$ is globally $\overline{\mathcal{C}}$ over every compact subset of $\mathbb{R}^n$. Since $\hat f$ is also compactly supported, applying Part (1) shows that $\hat{f}^{-1}$ is globally $\overline{\mathcal{C}}$ over $\mathbb{R}^n$.
\end{proof}

Now we can show that universal $\overline{\mathcal{C}}$ isotopies are
invariant under extensions of the domain:

\begin{Lemma}
\label{lem_universal_Cbar_extend_domain}
       Suppose  $F:U\times I\to \mathbb{R}^n$ is a
       compactly supported
       isotopy, 
       $W$ is an open subset of $\mathbb{R}^n$ that contains $U$, and the open embedding $g:W\to \mathbb{R}^n$ is an extension of $F(-,0)$. Define $\hat{F}(-,t)$ to be the extension of $F(-,t)$ to $W$ by $g$. Then $\hat{F}$ is a universal $\overline{\mathcal{C}}$ isotopy if and only if $F$ is a universal $\overline{\mathcal{C}}$ isotopy.
\end{Lemma}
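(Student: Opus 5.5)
Let $K\subset U$ be a compact set containing the support of $F$, and write $V=F(U,t)=g(U)$ and $\hat V=\hat F(W,t)=g(W)$. By Lemma \ref{lem_extend_emb_cpt_supported_difference}, each $\hat F(-,t)$ is an embedding of $W$ with image $g(W)$, and $\hat F$ is compactly supported. Hence $\hat F$ is a compactly supported isotopy and the notion of universal $\overline{\mathcal{C}}$ isotopy applies to it. Set
\[
\phi_t = F(-,t)\circ F(-,0)^{-1}:V\to V,\qquad \hat\phi_t=\hat F(-,t)\circ \hat F(-,0)^{-1}:\hat V\to \hat V .
\]
Since $\hat F(-,0)=g$ and $\hat F(-,t)=g$ off $U$, the map $\hat\phi_t$ is the extension of $\phi_t$ to $\hat V$ by the identity. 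Also $\phi_t$ is the identity outside the compact set $g(K)\subset V$. So the lemma reduces to a statement about a single compactly supported self-homeomorphism $\phi$ of an open set $V\subset \hat V$ and its identity extension $\hat\phi$: $\phi$ is globally $\overline{\mathcal{C}}$ on $V$ if and only if $\hat\phi$ is globally $\overline{\mathcal{C}}$ on $\hat V$.

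For the direction ``$\phi$ globally $\overline{\mathcal{C}}$ implies $\hat\phi$ globally $\overline{\mathcal{C}}$'', use Corollary \ref{cor_extend_cpt_supported_Cbar_homeo}(1). It says that the extension of $\phi$ to all of $\mathbb{R}^n$ by the identity is globally $\overline{\mathcal{C}}$, i.e.\ it is a $C^0$--limit of $\mathcal{C}$ homeomorphisms of $\mathbb{R}^n$. Restricting these approximations to $\hat V$ gives $\mathcal{C}$ embeddings of $\hat V$ converging to $\hat\phi$. (The images need not lie in $\hat V$, but the definition of globally $\overline{\mathcal{C}}$ only requires embeddings into $\mathbb{R}^n$.)

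For the converse, suppose $\hat\phi_i:\hat V\to\mathbb{R}^n$ are $\mathcal{C}$ embeddings converging to $\hat\phi$. Restricting them to $V$ gives $\mathcal{C}$ embeddings $\hat\phi_i|_V\to \hat\phi|_V=\phi$ in $C^0$. So $\phi$ is globally $\overline{\mathcal{C}}$ on $V$ directly from the definition.

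The argument is the same for each $t$, so this proves both implications. The only step that is not purely formal is the forward direction. There one must extend the approximating $\mathcal{C}$ embeddings by the identity, which requires first making them agree with the identity near the boundary of a neighborhood of the support. That is exactly what Lemma \ref{lem_C_approx_id} and Corollary \ref{cor_extend_cpt_supported_Cbar_homeo} supply, and I expect it to be the main point to check. In particular, I would verify that $\phi$ is compactly supported in $V$, which holds because $\supp\phi\subset g(K)$ and $g(K)$ is compact in $V$.
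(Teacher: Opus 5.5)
Your proof is correct and is essentially the paper's argument. The implication from $\hat F$ to $F$ is restriction of the approximating $\mathcal{C}$ embeddings to $V=g(U)$, and the converse follows by observing that $\hat F(-,t)\circ \hat F(-,0)^{-1}$ is the identity extension of $F(-,t)\circ F(-,0)^{-1}$ and applying Corollary \ref{cor_extend_cpt_supported_Cbar_homeo}; your extra checks (that $\hat F$ is a compactly supported isotopy with image $g(W)$, and that $\operatorname{supp}\phi_t\subset g(K)$ is compact in $V$) fill in details the paper leaves implicit.
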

\begin{proof}
    The fact that $\hat{F}$ being universal $\overline{\mathcal{C}}$ implies $F$ being universal $\overline{\mathcal{C}}$ follows from the definition. 
    To prove the other direction, note that $\hat{F}(-,t)\circ \hat{F}(-,0)^{-1}$ is the extension of $F(-,t)\circ F(-,0)^{-1}$ by the identity, so the desired result follows from Corollary \ref{cor_extend_cpt_supported_Cbar_homeo}.
\end{proof}

Here is another extension result we will need:

\begin{Lemma}
\label{lem_extend_universal_Cbar_isotopy}
    Suppose  $F:U\times I\to \mathbb{R}^n$ is a universal $\overline{\mathcal{C}}$ isotopy, and $W$ is an open subset of $\mathbb{R}^n$ such that $U\subset W$. Suppose there exists an open embedding $g:W\to \mathbb{R}^n$ which is an extension of $F(-,0)$, and suppose there is an open set $W'\subset W$ such that $g$ is globally $\overline{\mathcal{C}}$ on $W'$. Then, for each $t$, the extension of $F(-,t)$ to $W$ by $g$ is globally $\overline{\mathcal{C}}$ on $W'$. 
\end{Lemma}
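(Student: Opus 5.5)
The extension in question factors as a composition. Let $V = F(U,t)$, which by Lemma \ref{lem_extend_emb_cpt_supported_difference} is the same for all $t$. Let $\hat F(-,t)$ denote the extension of $F(-,t)$ to $W$ by $g$. Then on $W$ we have
\[
\hat F(-,t) = h_t\circ g, \qquad h_t := \text{extension of } F(-,t)\circ F(-,0)^{-1}\colon V\to V \text{ to } \mathbb{R}^n \text{ by } \id .
\]
To see this, note that on $U$ we get $h_t\circ g = F(-,t)\circ F(-,0)^{-1}\circ F(-,0)=F(-,t)$. On $W\setminus U$, the point $g(x)$ lies outside $V = g(U)$ because $g$ is injective, so $h_t(g(x)) = g(x)$. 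Since $F$ is compactly supported, $F(-,t)\circ F(-,0)^{-1}$ is a compactly supported embedding of the open set $V$. It is globally $\overline{\mathcal{C}}$ on $V$ by the definition of universal $\overline{\mathcal{C}}$ isotopy.

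The plan has three steps.

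\emph{First}, apply Corollary \ref{cor_extend_cpt_supported_Cbar_homeo}(1) to conclude that $h_t$ is a globally $\overline{\mathcal{C}}$ homeomorphism of $\mathbb{R}^n$. Concretely, we get $\mathcal{C}$ homeomorphisms $\hat h_i$ of $\mathbb{R}^n$ with $\hat h_i\to h_t$ in $C^0$. By the proof of that corollary, we may moreover take each $\hat h_i$ to equal the identity outside a fixed compact set $K\supset \supp$.

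\emph{Second}, take $\mathcal{C}$ embeddings $g_i\colon W'\to\mathbb{R}^n$ with $g_i\to g|_{W'}$ in $C^0$; these exist because $g$ is globally $\overline{\mathcal{C}}$ on $W'$. Then $\hat h_i\circ g_i$ is a $\mathcal{C}$ embedding of $W'$, since $\mathcal{C}$ embeddings form a pseudo group.

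\emph{Third}, show that $\hat h_i\circ g_i\to h_t\circ g$ in $C^0$ on $W'$. This is the only point that needs care, since the sets $W'$ and $g(W')$ may be noncompact. We estimate
\[
|\hat h_i g_i - h_t g| \le |\hat h_i g_i - \hat h_i g| + |\hat h_i g - h_t g| .
\]
The second term is at most $\|\hat h_i-h_t\|_{C^0}\to 0$. For the first term we need uniform equicontinuity of the $\hat h_i$. Since each $\hat h_i$ is the identity outside the compact set $K$, the convergence $\hat h_i\to h_t$ is uniform on all of $\mathbb{R}^n$. The limit $h_t$ is uniformly continuous, because it is the identity off $K$ and continuous on a compact neighborhood of $K$.

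\emph{Main obstacle.} Uniform convergence to a uniformly continuous map does not by itself give equicontinuity of the sequence. The standard trick handles this: given $\eta$, choose $i_0$ with $\|\hat h_i-h_t\|<\eta/3$ for $i\ge i_0$, and choose $\rho$ from the uniform continuity of $h_t$ at scale $\eta/3$. Then whenever $|\hat h_i(y)-\hat h_i(y')|$ is evaluated at points with $|y-y'|<\rho$ and $i\ge i_0$, it is less than $\eta$. Once $\|g_i-g\|<\rho$, the first term is therefore less than $\eta$. This yields the claimed global $\overline{\mathcal{C}}$ approximation of $\hat F(-,t)$ on $W'$.
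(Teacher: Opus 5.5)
Your proof is correct and follows the paper's argument. The paper also writes the extension as $H_t\circ g$, with $H_t$ the extension of $F(-,t)\circ F(-,0)^{-1}$ by the identity, and invokes Corollary \ref{cor_extend_cpt_supported_Cbar_homeo} to get that $H_t$ is globally $\overline{\mathcal{C}}$ on $\mathbb{R}^n$; your third step just makes explicit the $C^0$ convergence of the composed approximants $\hat h_i\circ g_i$ on the possibly noncompact $W'$, which the paper leaves implicit (you use that the $\hat h_i$ are the identity off a fixed compact set).
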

\begin{proof}
    Let $H_t$ be the extension of $F(-,t)\circ F(-,0)^{-1}$ by the identity. By the assumptions and Lemma \ref{cor_extend_cpt_supported_Cbar_homeo}, we know that $H_t$ is a globally $\overline{\mathcal{C}}$ homeomorphism of $\mathbb{R}^n$. Let $g_t$ be the extension of $F(-,t)$ to $W$ by $g$. Then $g_t =  H_t\circ g$. So $g_t$ is globally  $\overline{\mathcal{C}}$ on $W'$  if $g$ is globally $\overline{\mathcal{C}}$ on $W'$.
\end{proof}

We also need the following result about compositions of universal $\overline{\mathcal{C}}$ isotopies.

\begin{Lemma}
\label{lem_compose_cptly_supported_isotopy}
     Suppose  $F_1,F_2:U\times I\to \mathbb{R}^n$ are compactly supported isotopies such that $F_1(-,1) = F_2(-,0)$, and let $F:U\times I \to \mathbb{R}^n$ be the concatenation of $F_1$ and $F_2$:
     \[
     F(x,t) = \begin{cases}
         F_1(x,2t) \quad &\text{ if } t\in [0,1/2]\\
         F_2(x,2t-1) \quad  &\text{ if } t\in [1/2,1].\\
     \end{cases}
     \]
If both $F_1$ and $F_2$ are universal $\overline{\mathcal{C}}$, then $F$ is universal $\overline{\mathcal{C}}$.
\end{Lemma}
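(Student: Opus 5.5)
The plan is to unwind Definition \ref{def_universal_Cbar_isotopy} and reduce everything to the fact that compositions of globally $\overline{\mathcal{C}}$ homeomorphisms are globally $\overline{\mathcal{C}}$. First, $F$ is a continuous isotopy, since $F_1(-,1)=F_2(-,0)$ makes the two pieces agree at $t=1/2$. It is compactly supported, because its support lies in $\supp F_1\cup\supp F_2$, a compact subset of $U$. Hence, by Lemma \ref{lem_extend_emb_cpt_supported_difference}, the images $F(U,t)$ all equal a common open set $V=F_1(U,0)=F_2(U,0)$. The same holds for $F_1$ and $F_2$ separately.

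Next I treat the two halves of the interval. For $t\in[0,1/2]$ we have $F(-,t)\circ F(-,0)^{-1}=F_1(-,2t)\circ F_1(-,0)^{-1}$, which is globally $\overline{\mathcal{C}}$ on $V$ by hypothesis. For $t\in[1/2,1]$, write $s=2t-1$ and factor
\[
F(-,t)\circ F(-,0)^{-1}=\bigl(F_2(-,s)\circ F_2(-,0)^{-1}\bigr)\circ\bigl(F_1(-,1)\circ F_1(-,0)^{-1}\bigr),
\]
where both factors are homeomorphisms $V\to V$ and both are globally $\overline{\mathcal{C}}$ on $V$ by hypothesis. It therefore suffices to show that the composition $h_2\circ h_1$ of two such maps is globally $\overline{\mathcal{C}}$ on $V$.

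The main obstacle is that composition is not automatic on a noncompact open set $V$. Approximations $h_1^i\to h_1$ and $h_2^i\to h_2$ in $C^0$ on $V$ do not obviously give $h_2^i\circ h_1^i\to h_2\circ h_1$ uniformly, since $h_2$ need not be uniformly continuous, and $h_1^i(V)$ may leave $V$. I will avoid this by exploiting compact support. Each $h_j$ equals the identity outside a compact set $K_j\subset V$ (contained in the image of $\supp F_j$), so $h_j$ is a compactly supported globally $\overline{\mathcal{C}}$ embedding of $V$.

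By Corollary \ref{cor_extend_cpt_supported_Cbar_homeo}, more precisely its proof via Lemma \ref{lem_C_approx_id}, I can choose $\mathcal{C}$ approximations $h_j^i$ that equal the identity outside a fixed compact set $L\Subset V$ containing $K_1\cup K_2$ in its interior. Their extensions $\hat h_j^i$ by the identity are then $\mathcal{C}$ homeomorphisms of $\mathbb{R}^n$ converging to $\hat h_j$, the extensions of $h_j$. Now $\hat h_2^i\circ\hat h_1^i$ is a $\mathcal{C}$ homeomorphism of $\mathbb{R}^n$ by the pseudogroup property. It converges uniformly to $\hat h_2\circ\hat h_1$ for two reasons. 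First, $\hat h_2$ is uniformly continuous, since it is the identity off a compact set. Second, $|\hat h_2^i\circ\hat h_1^i-\hat h_2\circ\hat h_1|\le\|\hat h_2^i-\hat h_2\|+|\hat h_2\circ\hat h_1^i-\hat h_2\circ\hat h_1|$.

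Finally, each $\hat h_2^i\circ\hat h_1^i$ preserves $L$, because each $\hat h_j^i$ is the identity outside $L$ and is a homeomorphism, so it maps $L$ onto $L$. Hence the restrictions to $V$ are $\mathcal{C}$ embeddings of $V$ into $V$, and they converge to $h_2\circ h_1$. This shows $F$ is universal $\overline{\mathcal{C}}$.
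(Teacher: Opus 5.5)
Your proposal is correct and follows the paper's proof: you use the same factorization of $F(-,t)\circ F(-,0)^{-1}$ for $t\in[1/2,1]$ into the two universal $\overline{\mathcal{C}}$ pieces. Your additional argument that a composition of two compactly supported, globally $\overline{\mathcal{C}}$ self-homeomorphisms of $V$ is again globally $\overline{\mathcal{C}}$ is sound. It rests on extension by the identity as in Corollary \ref{cor_extend_cpt_supported_Cbar_homeo} together with the uniform continuity of $\hat h_2$, and it fills in a step the paper simply takes for granted.
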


\begin{proof}
   We have 
    \begin{align*}
    &F(-,t)\circ F(-,0)^{-1}=\\
    &\,\,\begin{cases}
        F_1(-,2t)\circ F_1(-,0)^{-1} &\text{ if }t\in[0,1/2],\\
        (F_2(-,2t-1)\circ F_2(-,0)^{-1})\circ (F_1(-,1)\circ F_1(-,0)^{-1}) &\text{ if }t\in[1/2,1].
    \end{cases}
    \end{align*}
    Therefore, if both $F_1$ and $F_2$ are universal $\overline{\mathcal{C}}$, then $F(-,t)\circ F(-,0)^{-1}$ is globally $\overline{\mathcal{C}}$ for each $t$. 
\end{proof}

\subsection{Local contractibility revisited}

To complete the proof, we will need the following refinement of Theorem \ref{thm_contract_C_embedding}.

\begin{Proposition}
\label{prop_match_global_contractibility}
The map $\mathcal{P}$ in Theorem \ref{thm_contract_C_embedding} can be chosen so that the following additional condition hold: If $f\in \mathcal{P}$ satisfies $f|_A$ is globally $\overline{\mathcal{C}}$, then $\mathcal{F}(f,-)$ is a universal $\overline{\mathcal{C}}$ isotopy.
\end{Proposition}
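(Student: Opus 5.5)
We revisit the proof of Theorem \ref{thm_contract_C_embedding}, i.e.\ Sullivan's version of the torus trick with the torus replaced by a closed almost parallelizable hyperbolic manifold. The plan is to check that every elementary move in that construction can be realized, for $f$ globally $\overline{\mathcal{C}}$ on $A$, by a universal $\overline{\mathcal{C}}$ isotopy, and then to assemble these moves using the closure properties from the previous subsection.

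Recall the structure of that proof. The parametrized contraction $\mathcal{F}(f,-)$ is a concatenation of finitely many stages, of which there are three kinds:
\begin{enumerate}
\item stages that conjugate an embedding by fixed $\mathcal{C}$ coordinate changes (immersions into the hyperbolic manifold, and coverings);
\item stages that restrict the domain and then extend by the previous map;
\item stages that straighten the lifted homeomorphism using handle-by-handle ``extension after restriction'' (Theorem \ref{thm_match_handle_in_C}) and the Alexander-type isotopy.
\end{enumerate}
By Lemma \ref{lem_compose_cptly_supported_isotopy}, it suffices to show that each stage is universal $\overline{\mathcal{C}}$ whenever its starting map is globally $\overline{\mathcal{C}}$ on the relevant domain. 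Lemma \ref{lem_extend_universal_Cbar_isotopy} propagates the global $\overline{\mathcal{C}}$ hypothesis from one stage to the next.

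The stages of kind (1) and (2) are handled as follows. Conjugation by fixed $\mathcal{C}$ charts preserves universal $\overline{\mathcal{C}}$ isotopies by Lemma \ref{lem_coord_change_universal_Cbar_isotopy}, since $\mathcal{C}$ homeomorphisms are trivially globally $\overline{\mathcal{C}}$ on relatively compact sets. Extension of compactly supported isotopies by the previous map preserves the property by Lemma \ref{lem_universal_Cbar_extend_domain}. These steps are bookkeeping. The one care needed is that every domain on which we invoke global approximations is relatively compact. This is arranged by shrinking $A$ slightly at each of the finitely many stages, as in Lemma \ref{lem_C_approx_id}.

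The substantive stage is (3), and I expect it to be the main obstacle. In Sullivan's argument, the isotopy from $g$ to a map equal to $\id$ near the core has the form $t\mapsto H_t\circ g$ or $t\mapsto g\circ H_t$. Here $H_t$ is built from $g$ itself, for example an Alexander-trick conjugate $x\mapsto t\,g(x/t)$, or a map produced by the handle-straightening step. Universality requires $H_t$ to be globally $\overline{\mathcal{C}}$ for each $t$, so the key point is equivariance of the construction under approximation. The plan has four steps.
\begin{enumerate}
\item Approximate $g$ on the relevant compact set by $\mathcal{C}$ embeddings $g_i$, using Lemma \ref{lem_C_approx_id} so that $g_i=\id$ wherever $g$ is.
\item Run the same (continuous in $C^0$) construction on $g_i$. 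Since each $g_i$ is $\mathcal{C}$ on all of $B$, the output isotopies $H^{(i)}_t$ consist of $\mathcal{C}$ embeddings by part (4) of Theorem \ref{thm_contract_C_embedding}.
\item By continuity of $\mathcal{F}$ in $f$, conclude that $H^{(i)}_t\to H_t$ in $C^0$.
\item Hence $H_t$, and so $F(-,t)\circ F(-,0)^{-1}$, is globally $\overline{\mathcal{C}}$.
\end{enumerate}

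This argument needs the construction to output $\mathcal{C}$ maps on the whole domain in question whenever its input is $\mathcal{C}$ there, and not just on $B'$. So I would first modify the choices in Theorem \ref{thm_contract_C_embedding}, enlarging $B$ to contain $A$ for the approximants. I would also use the parametrized form of Theorem \ref{thm_match_handle_in_C}, which is what Section \ref{sec_appendix} supplies. Inverses are handled by Corollary \ref{cor_extend_cpt_supported_Cbar_homeo}(2) and Lemma \ref{lem_inverse_top_embedding}. Property (5) and the other properties are unaffected by these modifications.
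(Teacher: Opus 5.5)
Your core mechanism is the paper's. You approximate by $\mathcal{C}$ embeddings kept inside the relevant domain via Lemma~\ref{lem_C_approx_id}, push them through Sullivan's continuous, $\mathcal{C}$--preserving construction, pass to the $C^0$ limit, and assemble handle by handle with Lemmas~\ref{lem_compose_cptly_supported_isotopy}, \ref{lem_universal_Cbar_extend_domain} and~\ref{lem_extend_universal_Cbar_isotopy}. The paper runs the same limiting argument on the extension-after-restriction map $\psi$ (Proposition~\ref{prop_match_global_Cbar_handle}). It needs a rescaling $s_r$ there, because Lemma~\ref{lem_C_approx_id} only makes the approximants equal the identity on a slightly smaller set. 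It then reads off universality from $\mathcal{F}(f,t)\circ\mathcal{F}(f,0)^{-1}=J_t(\psi(f))^{-1}$ and Lemma~\ref{lem_alex_isot_global_Cbar}.

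One correction: your stages of kinds (1)--(2) are not stages of the isotopy at all. The hyperbolic immersion, the covering and $\mathscr{U}$ enter only the construction of the single map $\psi(f)$. Lemma~\ref{lem_coord_change_universal_Cbar_isotopy} could not treat them anyway, since $\mathscr{U}$ is merely quasi-conformal and the lifted maps are not compactly supported. So it is the black-box limiting argument alone that carries the proof.
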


We will defer the proof, which requires saying more about Sullivan's hyperbolic torus trick, to Section \ref{subsec_adapt_local_contract}.

\subsection{Proofs of the main results}

We can now give the promised proofs of the two main results of this section, assuming Proposition \ref{prop_match_global_contractibility}. We repeat the statements of the results below.

\begin{repProposition}{prop_C_smoothing_in_global_Cbar_chart}
Suppose $U$ is an open subset of $\mathbb{R}^n$, and $A,A',B,B'$ are open subsets of $U$ such that $A' \Subset A$, $B'\Subset B$. Suppose $f:U\to \mathbb{R}^n$ is an embedding such that $f|_{B}$ is a $\mathcal{C}$ embedding and $f|_{A}$ is globally $\overline{\mathcal{C}}$.  Then for every $\epsilon>0$, there exists an $\epsilon$--isotopy $F:U\times I \to \mathbb{R}^n$, such that the following statements hold.
    \begin{enumerate}
        \item $F(-,0)=f$
        \item $F(-,1)$ is a $\mathcal{C}$ embedding on $A'\cup B'$.
        \item $F$ is compactly supported in $A$ and is a universal $\overline{\mathcal{C}}$ isotopy.
    \end{enumerate}
\end{repProposition}

\begin{proof}
    Let $\hat A,\hat{A}'$ be open sets such that $A'\Subset \hat{A}'\Subset \hat A\Subset A$. Let $g_i:A\to \mathbb{R}^n$ be a sequence of $\mathcal{C}$ embeddings such that $g_i\to f|_A$ in $C^0$. Then by Lemma \ref{lem_inverse_top_embedding}, we have $g_i(A)\supset f(\hat A)$ for $i$ sufficiently large, and $h_i = g_i^{-1} \circ f|_{\hat A}$ converges to $\id|_{\hat A}$ in $C^0$. Therefore, when $i$ is sufficiently large, we have $\Ima(h_i)\subset A$.

    We will want to isotope an $h_i$ to produce $F$, and we want $F$ to be an $\epsilon$-isotopy.  Towards proving the $\epsilon$--isotopy property,
    we first extract an auxiliary parameter $\delta > 0$ from $\epsilon$ as follows.  Let $i_0$ be an index such that for all $i\ge i_0$, we have $\Ima(h_i)\subset A$
    and $\|g_i-f\|_{C^0(A)}<\epsilon/3$. After shrinking $A$ if necessary, we may assume without loss of generality that $f$ is uniformly continuous on $A$.  Now choose  $\delta>0$ such that for all $x,y\in  A$ with $|x-y|<\delta$, we have $|f(x)-f(y)|<\epsilon/3$. Note that if $i\ge i_0$,
    and $x,y\in  A$ satisfy $|x-y|<\delta$, then $|g_i(x)-g_i(y)|<\epsilon$. 
    
    Now given the above $\delta > 0$, choose $i\ge i_0$ sufficiently large such that we may invoke Theorem \ref{thm_contract_C_embedding} to find a $\delta$--isotopy $G:\hat{A}\times I\to \mathbb{R}^n$ that is compactly supported in $\hat A'$, such that $G(-,0)=h_i$, and $G(-,1)$ is equal to the identity on $A'$, and $G(-,t)$ a $\mathcal{C}$ embedding on $B'\cap \hat{A}'$ for all $t$.  By Proposition \ref{prop_match_global_contractibility}, the isotopy $G$ can be taken to be universal $\overline{\mathcal{C}}$, since $h_i$ is a globally $\overline{\mathcal{C}}$ embedding of $\hat A$.  By the assumptions on $i_0$, we know that the image of $G$ is contained in $A$, so $g_i\circ G$ is a well-defined isotopy with compact support, and $g_i\circ G(-,0)=f|_{\hat{A}}$.    
    Define $F(-,t)$ to be the extension of $g_i\circ G(-,t)$ to $U$ by $f$.

    We now show that $F$ satisfies all of the properties required by the proposition.
    
    To start, since
    $G$ is a $\delta$--isotopy, the definition of $\delta$ implies that $g_i\circ H$ is an $\epsilon$--isotopy.   It is also immediate that $F(-,0)=f$  and that $F$ is compactly supported in $A$

    To show that $F$ is universal $\overline{\mathcal{C}}$, we note that because $G$ is universal $\overline{\mathcal{C}}$, Lemma \ref{lem_coord_change_universal_Cbar_isotopy} implies that $g_i\circ G$ is universal $\overline{\mathcal{C}}$.  Hence, by Lemma \ref{lem_universal_Cbar_extend_domain},  $F$ is also universal $\overline{\mathcal{C}}$.

    Finally, it remains to show item (2).  We know by construction that  $G(-,1)$ is a $\mathcal{C}$ embedding on $A'\cup (B'\cap \hat A)$.
    Hence, $g_i \circ G(-,1)$ is a $\mathcal{C}$ embedding on $A'\cup (B'\cap \hat A)$.  By assumption, $f$ is $\mathcal{C}$ on $B$.  Thus, since $g_i \circ G(-,1)$ only differs from $f$  in a compact subset of $\hat A$,  $F(-,1)$ is a $\mathcal{C}$ embedding on $A'\cup B'$.
\end{proof}

\begin{repTheorem}{thm_local_Cbar_to_global_Cbar}
Suppose $V\Subset U$ are open subsets of $\mathbb{R}^n$.
    If $f$ is $\overline{\mathcal{C}}$ on $U$, then $f$ is globally $\overline{\mathcal{C}}$ on $V$.
\end{repTheorem}

\begin{proof}
    Let $\{U_\alpha\}_{\alpha\in S}$, $\{U_\alpha'\}_{\alpha\in S}$ be open coverings of $U$ such that $U_\alpha'\Subset U_\alpha$ and $f$ is globally $\overline{\mathcal{C}}$ on each $U_\alpha$. Then there is a finite subset $S'\subset S$ such that $\{U_\alpha'\}_{\alpha\in S'}$ covers $V$. 
    Without loss of generality, write $S'=\{1,2,\dots,p\}$.

    We use induction to show the following claim: for each $q\in \{1,\dots,p\}$ and each $\epsilon>0$, there exists an $\epsilon$--isotopy $F:U\times I\to \mathbb{R}^n$ and open sets $\hat{U}_\alpha$ with $U_\alpha'\Subset \hat{U}_\alpha\Subset U_\alpha$ $(\alpha=1,\dots,q)$, such that 
    \begin{enumerate}
        \item $F(-,0)=f$.
        \item $F$ is a universal $\overline{\mathcal{C}}$ isotopy compactly supported in $\cup_{\alpha=1}^q U_\alpha$.
        \item $F(-,1)$ is a $\mathcal{C}$ embedding on $\cup_{\alpha=1}^q \hat U_\alpha$. 
    \end{enumerate}
    
    The case for $q=1$ follows immediately from Proposition \ref{prop_C_smoothing_in_global_Cbar_chart}. Suppose the result holds for $q=q_0<p$, we show that it also holds for $q=q_0+1$. Suppose $F$, ${\hat U}_\alpha$ $(\alpha=1,\dots,q_0)$ satisfy the above properties with respect to $q=q_0$ and $\epsilon = \epsilon_0$.
    
    Now let ${\hat U}_\alpha'$ $(\alpha=1,\dots,q_0)$ be open sets such that $U_\alpha'\Subset {\hat U}_\alpha'\Subset \hat{U}_\alpha$ $(\alpha=1,\dots,q_0)$. Let $\hat U_{q_0+1}$, $\hat U_{q_0+1}'$ be open sets such that $U_{q_0+1}'\Subset \hat U_{q_0+1}'\Subset \hat U_{q_0+1}\Subset U_{q_0+1}$. 
    Since $F$ is universal $\overline{\mathcal{C}}$, by Lemma \ref{lem_extend_universal_Cbar_isotopy}, the map $F(-,1)$ is globally $\overline{\mathcal{C}}$ on each $U_\alpha$.  
    Therefore, we can apply Proposition \ref{prop_C_smoothing_in_global_Cbar_chart} with $\epsilon = \epsilon_0$, $f= F(-,1)$, $A=\hat{U}_{q_0+1}$, $A'=\hat U_{q_0+1}'$, $B=\cup_{q=1}^{q_0} \hat{U}_q$, $B'=\cup_{q=1}^{q_0} \hat{U}_q'$. This yields an $\epsilon_0$--isotopy $G$ such that $G(-,0)=f= F(-,1)$, and $G$ is a universal $\overline{\mathcal{C}}$ isotopy supported in $\hat{U}_{q_0+1}$, and $G(-,1)$ is a $\mathcal{C}$ embedding on $\cup_{\alpha = 1}^{q_0+1} \hat U_\alpha'$. By Lemma \ref{lem_compose_cptly_supported_isotopy}, the concatenation of $F$ with $G$ is also a universal $\overline{\mathcal{C}}$ isotopy, and it satisfies the desired properties for $q=q_0+1$ and $\epsilon = 2\epsilon_0$. Since $\epsilon_0>0$ can be taken arbitrarily, the claim is proved. 

    Taking $\epsilon\to 0$, the claim implies that $f|_V$ can be $C^0$ approximated by $\mathcal{C}$ embeddings. Therefore, $f$ is globally $\overline{\mathcal{C}}$ on $V$.
\end{proof}

\subsection{The adaptation of local contractibility}
\label{subsec_adapt_local_contract}
It remains to explain the proof of Proposition \ref{prop_match_global_contractibility}. To prove Proposition \ref{prop_match_global_contractibility}, we have to say more about how Sullivan's local contractibility statement is proved.    We now review what we need to know. 

\subsubsection{Structure of Sullivan's argument}

At the heart of the argument is the following {\em handle straightening theorem} due to Sullivan, building on works of Edwards--Kirby \cite{edwards1971deformations,kirby1969stable}.  Recall that if $k+m=n$, the \emph{$n$--dimensional $k$--handle} is the pair $(H,A)= (B^m\times B^k,B^m\times \partial B^k)$, and the \emph{core} of the handle $H$ is defined to be $\{0\}\times B^k$.  The handle straightening theorem isotopes a compactly supported embedding of a handle near the identity to be the identity near the core while preserving the $\mathcal{C}$--condition.  More precisely, we have the following:

\begin{Theorem}[\cite{sullivan1979hyperbolic}, see also {\cite[Theorem 3.3]{tukia1981lipschitz}}]
\label{thm_handle_straighten_in_C}
Suppose $m,k$ are non-negative integers such that $m+k=n$.
Let $\epsilon_0\in(0,1/2)$ be a constant.  
Then there exists an open neighborhood $\mathcal{P}$ of 
\[
\id \in \Emb(B^m\times B^k, B^m\times (B^k\setminus B^k(1-\epsilon_0));\mathbb{R}^n),
\]
a neighborhood $N$ of $\partial(B^m(1/2)\times B^k)$ in $B^m(1/2)\times B^k$, and a map 
 \[
 \mathcal{F}: \mathcal{P} \times I \to \Emb(B^m\times B^k;\mathbb{R}^n)
 \]
 such that the following statements hold for each $f\in\mathcal{P}$.
 \begin{enumerate}
     \item $\mathcal{F}(f,0)=f$ and $\mathcal{F}(f,1)$ equals the identity on $B^m(1/4)\times B^k$.
     \item The isotopy $\mathcal{F}(f,-)$ is compactly supported in $(B^m(1/2)\times B^k)\setminus N$.
     \item If $f=\id$, then $\mathcal{F}(f,t)=\id$ for all $t$. 
     \item If $f$ is a $\mathcal{C}$ embedding, then $\mathcal{F}(f,t)$ is a $\mathcal{C}$ embedding for all $t$. 
 \end{enumerate} 
\end{Theorem}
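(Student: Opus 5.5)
This is Sullivan's handle straightening theorem, proved by the hyperbolic torus trick. I will follow the Kirby--Edwards scheme, replacing the torus $T^k$ with a closed, almost parallelizable hyperbolic manifold $M^k$. Such an $M^k$ exists by Sullivan, and it admits an immersion $\iota:M_0\to \mathbb{R}^k$, where $M_0 = M\setminus(\text{ball})$. The whole construction will be canonical in $f$, which gives continuity in $f$ and property (3) automatically.

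\textbf{Step 1: pull back to $B^m\times M_0$.} Fix a small ball $D\subset B^k$ and choose $M_0$ and $\iota$ so that $\iota$ is a $\mathcal{C}$ (even smooth) immersion onto a neighborhood of $B^k$ that is injective over $D$. Given $f$ near $\id$, I lift $f$ via $\id\times\iota$ to an embedding $\tilde f$ of $B^m\times M_0$ that is $C^0$--close to the inclusion. Since $f$ is the identity near $B^m\times\partial B^k$, the lift $\tilde f$ is the identity near $B^m\times\partial M_0$. I then extend $\tilde f$ by the identity to an embedding of $B^m\times M$ near the inclusion. If $f$ is $\mathcal{C}$, so is $\tilde f$, since $\iota$ is a smooth immersion.

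\textbf{Step 2: use the hyperbolic structure.} First I use the product structure in the $B^m$ direction to restrict to a slice near $B^m(1/2)\times M$. Then I pass to the universal cover $B^m\times\mathbb{H}^k$, where $\tilde f$ lifts to a bounded-distance, equivariant homeomorphism. The key geometric input is Sullivan's coning in $\mathbb{H}^k$. Under radial contraction toward a basepoint (or by working at the sphere at infinity), a homeomorphism that moves points a bounded hyperbolic distance becomes asymptotically trivial. On the other hand, it stays locally $\mathcal{C}$ with controlled constants, because the deck group acts isometrically, so Lipschitz and quasi-conformal constants are uniform. This step substitutes for the torus step where one unwraps to $\mathbb{R}^k$ and uses the Alexander trick at infinity. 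Using the coning, I produce a canonical isotopy on $B^m\times\mathbb{H}^k$, which descends to the compact quotient, from $\tilde f$ to a map that equals the identity near $B^m(1/4)\times M$. In the $\mathcal{C}$ category this must preserve $\mathcal{C}$ at every time. For this I rely on Sullivan's lemma: a $\mathcal{C}$ homeomorphism of $B^m\times M$ that is close to the identity (and homotopic to it) can be isotoped to the identity through $\mathcal{C}$ homeomorphisms. This is the standard deformation argument (Edwards--Kirby plus the Tukia--V\"ais\"al\"a estimates), and it is compatible with the support conditions.

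\textbf{Step 3: push back down.} Restricting the isotopy to the part over $D$ and pushing it down by $\id\times\iota$, then conjugating with a fixed radial $\mathcal{C}$ reparametrization carrying $D$ to $B^k$, gives an isotopy of $f$. Its support is inside $B^m(1/2)\times B^k$ and misses a fixed neighborhood $N$ of the boundary. At time $1$ it equals $\id$ on $B^m(1/4)\times B^k$. Each step (lifting, coning, restricting) is continuous in $f$ and fixes $\id$, which gives (1)--(3). Each step also uses only $\mathcal{C}$ operations (composition with smooth immersions and isometries), which gives (4).

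\textbf{Main obstacle.} The hardest step is the core deformation in Step 2. It requires a canonical, continuous-in-$f$ isotopy on $B^m\times M$ that kills a near-identity homeomorphism while remaining $\mathcal{C}$. Torus-trick arguments normally use PL or smooth structure theory at this point, which is not available here. My first attempt would be Sullivan's approach: on the universal cover, conjugate $\tilde f$ by the isometric dilations fixing a basepoint (or by the boundary-at-infinity extension). Because hyperbolic isometries are $\mathcal{C}$ with constant $1$, the $\mathcal{C}$ constants stay uniform along the deformation, and I would check that the isotopy extends continuously to the identity at the end.
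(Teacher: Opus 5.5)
There is a genuine gap, and it begins with which factor carries the hyperbolic manifold. In the handle $(B^m\times B^k, B^m\times\partial B^k)$, $f$ is already the identity near $B^m\times\partial B^k$. What must be manufactured is a $\mathcal{C}$ map that agrees with $f$ near the core, on $B^m(1/4)\times B^k$, but is the identity near $\partial B^m(1/2)\times B^k$. That cut-off lives in the co-core factor $B^m$, so the trick has to be run there. The paper immerses $Q^m(1/2)\to B^m$, lifts $f$ to $Q^m(3/4)\times B^k$, and extends by the identity near $Q^m\times\partial B^k$. It then fills the remaining ball $D(3/4)\times B^k(1-\epsilon_0)$ using the canonical $\mathcal{C}$--Schoenflies theorem (Theorem \ref{thm_canonical_schoenflies}), which gives a homeomorphism $\psi_3(f)$ of $Q^m\times B^k$ depending continuously on $f$. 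By putting $M^k$ in the core factor, you avoid that Schoenflies step only because your lift is automatically the identity at $B^m\times\partial M_0$. The price is that nothing ever localizes in the $B^m$ direction: restricting to a slice near $B^m(1/2)\times M$ does not make anything the identity near $\partial B^m(1/2)$. Moreover, for $k=0$, where the full $n$--dimensional trick is needed, there is no $M^k$ at all. Your Step 3 has a related defect: conjugating the part over $D$ by a rescaling $D\to B^k$ gives an isotopy of a conjugate of $f|_{B^m\times D}$, not of $f$.

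The second gap is Step 2. The ``Sullivan's lemma'' you invoke says a $\mathcal{C}$ homeomorphism near the identity can be isotoped to the identity through $\mathcal{C}$ homeomorphisms. That is local contractibility, which is \emph{deduced} from handle straightening (Theorem \ref{thm_contract_C_embedding} from Theorem \ref{thm_handle_straighten_in_C}), so using it here is circular. The proposed coning also fails: $\mathbb{H}^k$ has no isometric dilations fixing a point, since the isotropy group is compact. Coning toward a basepoint is not deck-equivariant either, so it cannot descend to the quotient. Your parenthetical about the sphere at infinity is the right mechanism, but in the paper it produces a \emph{map}, not an isotopy:
\begin{itemize}
\item One lifts $\psi_3(f)$, via the short geodesic homotopy to $\id$, to a bounded-distance homeomorphism of $\widetilde{Q}^m\times B^k$.
\item After the quasi-conformal identification $\mathscr{U}:\widetilde{Q}^m\times B^k\to B^n\setminus\partial B^m$, this extends by the identity across $\partial B^m$ to a $\mathcal{C}$ homeomorphism of $\mathbb{R}^n$. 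This is where the Tukia--V\"ais\"al\"a estimates and quasi-conformal removability enter.
\item Conjugating by $\eta$ gives the $\psi(f)$ of Theorem \ref{thm_match_handle_in_C}. It equals $f$ on $B^m(1/4)\times B^k$ because the chart chain in Proposition \ref{prop_coordinate_change_Bk_times_Qm}(4) is exactly the identity, and it equals $\id$ on a fixed $N$.
\end{itemize}
The isotopy then comes only from the \emph{Euclidean} Alexander trick: $J_t(f)(x)=t\,\hat\psi(f)(x/t)$ and $\mathcal{F}(f,t)=J_t(f)^{-1}\circ f$. This stays $\mathcal{C}$ because Euclidean dilations are similarities, and it gives $\mathcal{F}(f,1)=\id$ on $B^m(1/4)\times B^k$ since $\psi(f)=f$ there.
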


\begin{remark}
   If $k=0$, then by our convention, $B^k\setminus B^k(1-\epsilon_0)=\emptyset$. If $m=0$, our convention is $B^m(r) = B^m = \{0\}$ for all $r>0$, so last part of Statement (1) is equivalent to $\mathcal{F}(f,1)=\id$ on $B^n$. 
\end{remark}

Recall that if $M$ is a topological $n$--manifold, $(H,A)$ is an $n$ dimensional handle, and $\tau:A\to \partial M$ is an embedding, then the glued space $M\cup_\tau H$ is also a topological manifold, and it is called the manifold obtained by a \emph{$k$--handle attachment} via $\tau$. 
The handle straightening theorem 
implies that if $M\cup_\tau H$ is a submanifold of $\mathbb{R}^n$, and if $f:M\cup_\tau H\to \mathbb{R}^n$ is an embedding such that $f=\id$ on a neighborhood of $M$ and $f$ is sufficiently close to $\id$ on $H$, then one can modify the embedding by an isotopy supported in the interior of $H$ so that the result equals the identity on both $M$ and a neighborhood of the core of $H$. 

Theorem \ref{thm_handle_straighten_in_C} implies the local contractibility Theorem \ref{thm_contract_C_embedding} by the following argument:

\begin{proof}
[Proof of Theorem \ref{thm_contract_C_embedding} from Theorem \ref{thm_handle_straighten_in_C}]

Endow $A$ with a locally finite handlebody structure, and let $H\subset A$ be a finite sub-handlebody whose interior covers the closure of $A'$. We further require that if a handle $h$ of $H$ intersects $B'$, then there is a sub-handlebody of $H$ that contains $h$ and is contained in $B$. This can be achieved because there exists $r>0$ such that the $r$--neighborhood of $B'$ is contained in $B$, so the desired property is satisfied if the diameter of every handle is less than $r/(n+1)$. Applying Theorem \ref{thm_handle_straighten_in_C} to each handle of $H$ in the order of their indices yields a map $\mathcal{F}$ that satisfies Statements (2)-(5). If we further require that the diameters of all handles in $H$ are less than $\epsilon/(n+1)$, then the isotopy also satisfies Statement (1). 
\end{proof}

Sullivan's handle straightening theorem is itself essentially a consequence of the following ``extension after restriction" type result that we will also need as an input:

\begin{Theorem}[\cite{sullivan1979hyperbolic}, see also {\cite[Theorem 3.3]{tukia1981lipschitz}}]
\label{thm_match_handle_in_C}
Suppose $m,k$ are non-negative integers such that $m+k=n$.
Let $\epsilon_0\in(0,1/2)$ be a constant.    
Then there exists an open neighborhood $\mathcal{P}$ of 
\[
\id \in \Emb(B^m\times B^k, B^m\times (B^k\setminus B^k(1-\epsilon_0));\mathbb{R}^n),
\]
a neighborhood $N$ of $\partial(B^m(1/2)\times B^k)$ in $B^m(1/2)\times B^k$, and a map
    \[
\psi:\mathcal{P} \to \Emb(B^m(1/2)\times B^k, N; \mathbb{R}^n),
   \]
    such that 
   \begin{enumerate}
       \item $\psi(\id) = \id$.
       \item $f = \psi(f)$ on $B^m(1/4)\times B^k$ for all $f\in \mathcal{P}$.
        \item If $f\in \mathcal{P}$ is a $\mathcal{C}$ embedding, then $\psi(f)$ is a  $\mathcal{C}$ embedding. 
    \end{enumerate}
\end{Theorem}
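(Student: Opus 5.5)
We will prove Theorem \ref{thm_match_handle_in_C} by Sullivan's hyperbolic version of the torus trick (Kirby's argument with the torus replaced by a closed almost parallelizable hyperbolic manifold), doing all constructions canonically in $f$ so that $\psi$ is continuous and $\psi(\id)=\id$. By induction on $k$ we may assume the statement (and hence Theorem \ref{thm_handle_straighten_in_C}) holds for handles of lower index. Choose a closed hyperbolic $k$--manifold $M$ that is stably parallelizable, so that $M\setminus\{\pt\}$ immerses into $\mathbb{R}^k$ by an immersion $\alpha$ which we take smooth, hence $\mathcal{C}$ and locally bi-Lipschitz. Shrink $B^k$ slightly so that a small ball $D\subset M\setminus\{\pt\}$ is mapped by $\alpha$ diffeomorphically onto a neighborhood of $B^k(1-\epsilon_0)$.

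\textbf{Step 1 (pullback).} Given $f$ near $\id$ and equal to $\id$ on $B^m\times(B^k\setminus B^k(1-\epsilon_0))$, pull $f$ back along $\id\times\alpha$ to an embedding $\tilde f$ of $B^m\times (M\setminus\{\pt\})$ into itself, which equals the identity near $B^m\times\{\pt\}$ after the puncture is filled in; so $\tilde f$ extends by the identity to a self-embedding of $B^m\times M$ close to $\id$. Pullback along a local $\mathcal{C}$ diffeomorphism preserves $\mathcal{C}$ since $\mathcal{C}$ is a pseudo group. \textbf{Step 2 (straighten over the boundary).} Using the handle decomposition of $M$ and the lower-index cases of Theorem \ref{thm_handle_straighten_in_C} applied to handles of $B^m\times M$ of index $<$ the top, isotope $\tilde f$ so that it equals the identity near $\partial B^m(1/2)\times M$ while staying unchanged over $B^m(1/4)\times M$; all steps preserve $\mathcal{C}$ and depend continuously on $f$. \textbf{Step 3 (unwrap via hyperbolic geometry).} Lift to the universal cover $B^m\times \mathbb{H}^k$, obtaining a bounded-distance, equivariant embedding which is the identity near $\partial B^m(1/2)\times\mathbb{H}^k$. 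The key hyperbolic input replacing Kirby's radial shrinking: compactify $\mathbb{H}^k$ to the closed ball; a map at bounded hyperbolic distance from the identity extends continuously by the identity on the sphere at infinity, and in the Euclidean ball model the extension is $\mathcal{C}$ precisely because bounded hyperbolic displacement of a locally uniformly $\mathcal{C}$, deck-equivariant map gives uniformly controlled bi-Lipschitz (resp. quasiconformal) constants after conformal rescaling (here compactness of $M$ gives uniform constants). Identify the ball model with $B^k$ and push back.

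\textbf{Step 4 (assemble).} The resulting map $\psi(f)$ is an embedding of $B^m(1/2)\times B^k$, equals $\id$ near $N$ (boundary in the $B^m$ direction from Step 2, in the $B^k$ direction from Step 3), and on $B^m(1/4)\times B^k$ — we arrange the identification of the compactified cover with $B^k$ to restrict to $\alpha^{-1}$ on the fundamental domain containing $D$ — it agrees with $f$, after precomposing with a fixed homeomorphism to correct the identifications; properties (1) and (3) follow from canonicity and pseudo-group closure. The main obstacle I expect is Step 3: proving that the extension across the sphere at infinity is genuinely $\mathcal{C}$ (not just topological), which is Sullivan's essential use of hyperbolic rather than flat geometry; I would first establish the uniform local estimate on a fundamental domain and then propagate it using that deck transformations are Möbius, hence uniformly $\mathcal{C}$ after rescaling. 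A secondary difficulty is making Step 2's choices continuous in $f$, handled by fixing all handle data in advance.
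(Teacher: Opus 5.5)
There is a genuine gap: you have put the hyperbolic manifold in the wrong factor. In the statement, $f$ is already the identity near $B^m\times\partial B^k$. What must be produced is a cut-off in the $B^m$ direction, i.e.\ a map equal to $f$ on $B^m(1/4)\times B^k$ and to $\id$ near $\partial B^m(1/2)\times B^k$. The hyperbolic trick creates compact support precisely in the factor that is wrapped up and then unwrapped, because the lifted map extends by the identity over the sphere at infinity of \emph{that} factor. By replacing $B^k$ with $M^k$, your Step 3 only recovers the identity near $B^m\times\partial B^k$, which you were given. This is also why your Step 1 finds the pullback already trivial near the puncture. The entire content of the theorem is thereby moved into Step 2, which is literally the statement being proved with $B^k$ replaced by the closed manifold $M$.

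The justification you give for Step 2 does not work. Straightening the handles $B^m\times h_j$ (with $h_j$ a handle of $M$ of index $j<k$) makes $\tilde f$ the identity near their cores. Those cores lie in $\{0\}\times M\subset B^m(1/4)\times M$, so this alters $\tilde f$ exactly where it must stay unchanged and produces nothing near $\partial B^m(1/2)\times M$. Cutting off near $S^{m-1}\times M$ by handle straightening would require handles of all indices up to $n-1\ge k$, not only those of index $<k$. Moreover, the induction has no base: for $k=0$ (the $0$--handle, Kirby's original case) there is no $M$ and no lower-index case, so Step 2 is the full theorem.

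The paper's proof (for $m>0$) instead wraps the $B^m$ factor:
\begin{enumerate}
    \item It pulls $f$ back along $i\times\id\colon Q^m(3/4)\times B^k\to B^m\times B^k$, with $Q^m$ hyperbolic (or $S^1$ when $m=1$).
    \item It extends the pullback by the identity over $Q^m\times(B^k\setminus B^k(1-\epsilon_0))$.
    \item Since the pullback is \emph{not} the identity near the removed disc, it fills in $D(1)\times B^k$ using the canonical $\mathcal{C}$--Schoenflies theorem (Theorem \ref{thm_canonical_schoenflies}). This gives a homeomorphism $\psi_3(f)$ of $Q^m\times B^k$, continuous in $f$, $\mathcal{C}$ when $f$ is, and fixing a neighborhood of $Q^m\times\partial B^k$. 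This ingredient is absent from your plan.
    \item Only then does it lift to $\widetilde{Q}^m\times B^k$ and transport by the quasi-conformal $\mathscr{U}$ to $B^n\setminus\partial B^m$.
    \item It extends by the identity across $\partial B^m$; the $\mathcal{C}$ property of this extension is Proposition \ref{prop_coordinate_change_Bk_times_Qm}(3), via Tukia--V\"ais\"al\"a and removability of singularities.
    \item Finally it conjugates by $\eta$; agreement with $f$ on $B^m(1/4)\times B^k$ comes from Proposition \ref{prop_coordinate_change_Bk_times_Qm}(4).
\end{enumerate}
No induction on the handle index is needed.
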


Extension after restriction implies handle straightening by the following ``Alexander trick", which we now explain.

\begin{proof}[Proof of Theorem \ref{thm_handle_straighten_in_C} from Theorem \ref{thm_match_handle_in_C}]
    Let $\psi, \mathcal{P}$ be as in Theorem \ref{thm_match_handle_in_C} and let $f\in \mathcal{P}$. Since $\psi(f)$ is an embedding of $B^m(1/2)\times B^k$ that equals the identity near the boundary, it is a self-homeomorphism of $B^m(1/2)\times B^k$ by Lemma \ref{lem_extend_cpt_supported_homeo}. Let $\hat \psi(f)$ denote the extension of $\psi(f)$ to $\mathbb{R}^n$ by the identity. For $t\in(0,1]$, define 
    \begin{align*}
    J_t(f) : \mathbb{R}^n &\to \mathbb{R}^n \\
                        x &\mapsto t\cdot \hat{\psi}(f)(x/t),
    \end{align*}
    and define $J_0(f)=\id$. 
    By Lemma \ref{lem_extend_cpt_supported_homeo}, every $J_t(f)$ is a self homeomorphism of $\mathbb{R}^n$; if $\psi(f)$ is a $\mathcal{C}$ homeomorphism, then each $J_t(f)$ is a $\mathcal{C}$ homeomorphism. Define $\mathcal{F}(f,t) =  J_t(f)^{-1}\circ f$. 
    Note that $\{J_t(f)^{-1}\}$ is compactly supported in the interior of ${B}^m(1/2)\times {B}^k$. Therefore, by Lemma \ref{lem_inverse_top_embedding}, the support of $\mathcal{F}(f,-) = f^{-1}(\supp \{{J}_t(f)^{-1}\})$ is contained in a compact subset $A$ of the interior of ${B}^m(1/2)\times {B}^k$ when $f$ is sufficiently close to $\id$, where $A$ only depends on $\|f-\id\|_{C^0}$.  So $\mathcal{F}$ satisfies the desired properties. 
\end{proof}

We will review the proof of Theorem \ref{thm_match_handle_in_C} in Section \ref{sec_appendix}.

\subsubsection{Our refinement: extension after restriction revisited}

To prove our refined local contractibility result stated in Proposition \ref{prop_match_global_contractibility}, we first need to refine the extension-after-restriction statement.  Namely, we prove:

\begin{Proposition}
\label{prop_match_global_Cbar_handle}
   In Theorem \ref{thm_match_handle_in_C}, one can choose $\mathcal{P}$, $N$, $\psi$ to satisfy the following additional property: if $f\in\mathcal{P}$ is globally $\overline{\mathcal{C}}$, then $\psi(f)$ is globally $\overline{\mathcal{C}}$. 
\end{Proposition}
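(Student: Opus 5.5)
The plan is to deduce the global $\overline{\mathcal{C}}$ property of $\psi(f)$ from two facts: $\psi$ is continuous in the $C^0$--topology, and $\psi$ sends $\mathcal{C}$ embeddings to $\mathcal{C}$ embeddings (Theorem \ref{thm_match_handle_in_C}(3)). Suppose $f\in\mathcal{P}$ is globally $\overline{\mathcal{C}}$. If we could find $\mathcal{C}$ embeddings $f_i\in\mathcal{P}$ with $f_i\to f$ in $C^0$, then each $\psi(f_i)$ would be a $\mathcal{C}$ embedding. Continuity of $\psi$ would give $\psi(f_i)\to\psi(f)$ in $C^0$ on $B^m(1/2)\times B^k$, so $\psi(f)$ would be globally $\overline{\mathcal{C}}$. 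Thus everything reduces to an approximation statement \emph{inside} $\mathcal{P}$. The difficulty is that $\mathcal{P}$ sits in the constrained space $\Emb(B^m\times B^k, B^m\times (B^k\setminus B^k(1-\epsilon_0));\mathbb{R}^n)$. A sequence of $\mathcal{C}$ embeddings converging to $f$ need not equal the identity on $B^m\times (B^k\setminus B^k(1-\epsilon_0))$, so it need not lie in $\mathcal{P}$.

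To fix this, I would shrink the data before applying Theorem \ref{thm_match_handle_in_C}. I would apply that theorem with a smaller collar constant, say $\epsilon_0/3$, obtaining $\mathcal{P}_0$, $N$, $\psi_0$. Then define $\mathcal{P}$ to be the set of $f$ that equal the identity on the larger collar $B^m\times(B^k\setminus B^k(1-\epsilon_0))$ and lie in $\mathcal{P}_0$, and set $\psi=\psi_0|_{\mathcal{P}}$. This keeps conclusions (1)--(3), and $\mathcal{P}$ is still an open neighborhood of $\id$ in the stated space. Now take $f\in\mathcal{P}$ globally $\overline{\mathcal{C}}$ and $\mathcal{C}$ embeddings $g_i\to f$, each defined on an open neighborhood of $B^m\times B^k$. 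Since $f=\id$ on the collar, the $g_i$ are $\mathcal{C}$ and $C^0$--close to $\id$ on a neighborhood of the collar. I would apply the local contractibility Theorem \ref{thm_contract_C_embedding} with $A$ a small open neighborhood of $B^m\times B^k([1-2\epsilon_0/3,1])$ on which $f$ is the identity, $A'\Subset A$ containing $B^m\times B^k([1-\epsilon_0/3,1])$, and $B=$ the whole domain. This modifies $g_i$ by a small isotopy, compactly supported in $A$ and through $\mathcal{C}$ embeddings, into $g_i'$ with $g_i'=\id$ on $B^m\times(B^k\setminus B^k(1-\epsilon_0/3))$. The argument is exactly that of Lemma \ref{lem_C_approx_id}. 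Since the isotopies are $(1/i)$--small, we get $g_i'\to f$, and for $i$ large $g_i'\in\mathcal{P}_0$ because $\mathcal{P}_0$ is open. Then $\psi_0(g_i')$ is $\mathcal{C}$ and converges to $\psi_0(f)=\psi(f)$.

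The main obstacle is the geometric bookkeeping near $\partial(B^m\times B^k)$. The region where $f$ is known to be the identity meets the boundary of the domain, so it is not an open subset of $\mathbb{R}^n$. Local contractibility needs an open set $A$ on which the approximants are close to $\id$, and the resulting isotopy must not disturb the approximation elsewhere. I would handle this using the convention that a globally $\overline{\mathcal{C}}$ embedding of a closed set comes with approximants on an open neighborhood. If necessary, I would also precompose with a slight radial rescaling in the $B^k$ factor, which is a $\mathcal{C}$ map, so that the collar where $f=\id$ contains a genuine open neighborhood of the compact set $B^m\times B^k([1-2\epsilon_0/3,1])$ within the domain. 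A second point to check is that $\psi_0$ is continuous on all of $\mathcal{P}_0$, not only at $\id$. This holds because $\psi$ in Theorem \ref{thm_match_handle_in_C} is assumed to be a continuous map of embedding spaces, per the paper's standing convention.
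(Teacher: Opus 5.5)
Your reduction matches the paper's: produce $\mathcal{C}$ approximants of $f$ that lie in the domain of $\psi$, then use continuity of $\psi$ and Theorem \ref{thm_match_handle_in_C}(3). The gap is in producing those approximants, and that step is the whole content of the paper's proof. You keep the domain $B^m\times B^k$ and only shrink the collar constant. So every element of $\mathcal{P}_0$ must equal $\id$ on $B^m\times B^k([1-\epsilon_0/3,1])$, a set that meets $\partial(B^m\times B^k)$ along $B^m\times\partial B^k$ and along $\partial B^m\times B^k([1-\epsilon_0/3,1])$. To force $g_i'=\id$ there with Theorem \ref{thm_contract_C_embedding}, you need an open $A\subset\mathbb{R}^n$ and $A'\Subset A$ containing this set, with $\|g_i|_A-\id|_A\|_{C^0}<\delta$, where $\delta$ depends on $(A,A')$. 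But $g_i\to f$ is only known on $B^m\times B^k$. Outside it the extensions of $g_i$ are arbitrary, and the neighborhood on which $g_i$ is $\delta$--close to $\id$ may shrink with $i$, forcing $\delta\to 0$. So your claim that ``the $g_i$ are $\mathcal{C}$ and $C^0$--close to $\id$ on a neighborhood of the collar'' is unjustified. Lemma \ref{lem_C_approx_id} also does not apply: it needs $A\Subset U\setminus\supp f$ with $U$ open and $f$ globally $\overline{\mathcal{C}}$ on $U$, and a neighborhood ``within the domain'' is not enough.

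Your suggested remedy does not fix this as stated. Precomposing with a rescaling destroys $f=\id$ on the collar; you must conjugate instead. A rescaling of the $B^k$ factor alone leaves $\partial B^m\times B^k([1-\epsilon_0/3,1])$ on the boundary. And after rescaling you are approximating a different map. A correct repair of your route uses $s_r(x)=rx$ with $r<1$ near $1$. Then $f^{(r)}=s_r^{-1}\circ f\circ s_r$ is globally $\overline{\mathcal{C}}$ on the interior $U_r$ of $s_r^{-1}(B^m\times B^k)$, which contains $B^m\times B^k$ compactly. Once $(1-\epsilon_0)/r<1-\epsilon_0/3$, the set $U_r\setminus\supp f^{(r)}$ contains a neighborhood of $B^m\times B^k([1-\epsilon_0/3,1])$, and Lemma \ref{lem_C_approx_id} gives $\mathcal{C}$ approximants of $f^{(r)}$ equal to $\id$ there. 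Since $f^{(r)}\to f$ uniformly on $B^m\times B^k$ as $r\to 1$, a diagonal sequence lies in $\mathcal{P}_0$ and converges to $f$. The paper uses the scaling the other way round. It restricts $f$ to $s_r(B^m\times B^k)$, whose collar $B^m(r)\times B^k([1-\epsilon_0/2,r])$ is a compact subset of the interior of the region where $f=\id$, so Lemma \ref{lem_C_approx_id} applies directly. It then applies Theorem \ref{thm_match_handle_in_C} to the rescaled handle (with radius $1/(4r)$ in place of $1/4$) and extends $\psi_r(f_r)$ by the identity, invoking Corollary \ref{cor_extend_cpt_supported_Cbar_homeo}.
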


\begin{remark}
    By Corollary \ref{cor_extend_cpt_supported_Cbar_homeo}, if $\psi(f)$ is globally $\overline{\mathcal{C}}$, then the extension of $\psi(f)$ to $\mathbb{R}^n$ by the identity is a globally $\overline{\mathcal{C}}$ homeomorphism of $\mathbb{R}^n$. 
\end{remark}

\begin{proof}
    To simplify notation, we write $B^k(1)\setminus B^k(1-\epsilon_0)$ as $B^k((1-\epsilon_0,1])$, with the understanding that this set is empty if $k=0$.
    
    Let $\epsilon_0$ be the constant in the assumptions of Theorem \ref{thm_match_handle_in_C}, let $r \in (1-\frac{\epsilon_0}{2},1)$. Let $s_{r}:\mathbb{R}^n\to\mathbb{R}^n$ be the homeomorphism that takes $x\in\mathbb{R}^n$ to $rx$. By Lemma \ref{lem_C_approx_id}, if $f\in \Emb(B^m\times B^k, B^m\times B^k((1-\epsilon_0,1]);\mathbb{R}^n)$ is globally $\overline{\mathcal{C}}$, then $f|_{s_r(B^m\times B^k)}$ can be $C^0$ approximated by a sequence of $\mathcal{C}$ embeddings in 
    \[
    \Emb\Bigg(s_r(B^m\times B^k), s_r\Big(B^m\times B^k\big((\frac{1-\frac{\epsilon_0}{2}}{r},1]\big)\Big);\mathbb{R}^n\Bigg).
    \]

    Note that by a coordinate change of the $B^m$ factor, the radius $1/4$ in Statement (2) of Theorem \ref{thm_match_handle_in_C} can be changed to an arbitrary constant in $(0,1/2)$, and we change the radius to $1/(4r)$. 
    By rescaling, we may apply Theorem \ref{thm_match_handle_in_C} to embeddings of $s_r(B^m\times B^k)$. 
    Hence by Theorem \ref{thm_match_handle_in_C}, there exist a neighborhood $\mathcal{P}_r$ of 
    \[
    \id\in \Emb\Bigg(s_r(B^m\times B^k), s_r\Big(B^m\times B^k\big((\frac{1-\frac{\epsilon_0}{2}}{r},1]\big)\Big);\mathbb{R}^n\Bigg),
    \]
    a neighborhood $N$ of $\partial(B^m(1/2)\times B^k)$ in $B^m(1/2)\times B^k$, and a map 
    \[
    \psi_r: \mathcal{P}_r \to \Emb(s_r(B^m(1/2)\times B^k), s_r(N); \mathbb{R}^n)
    \]
    such that $\psi_r(f) = f$ on $s_r(B^m(1/(4r))\times B^k)= B^m(1/4)\times B^k(r)$. 

    Let $\mathcal{P}$ be the set of elements $f\in \Emb(B^m\times B^k, B^m\times B^k((1-\epsilon_0,1]);\mathbb{R}^n)$ such that $f|_{s_r(B^m\times B^k)}\in \mathcal{P}_r$. For $f\in \mathcal{P}$, let $f_r = f|_{s_r(B^m\times B^k)}$. Let $\psi(f)$ be the extension of $\psi_r(f_r)$ from $s_r(B^m(1/2)\times B^k)$ to $B^m(1/2)\times B^k$ by the identity. 
    Since $\psi_r(f_r)$ equals the identity near $\partial(s_r(B^m(1/2)\times B^k))$, we know that $\psi(f)$ is a homeomorphism that equals the identity near $\partial(B^m(1/2)\times B^k)$. It is also straightforward to verify that $\psi(f)$ satisfies all of the properties stated in Theorem \ref{thm_match_handle_in_C}.
    
    If $f$ is a globally $\overline{\mathcal{C}}$ embedding, then by Lemma \ref{lem_C_approx_id}, $f_r$ can be $C^0$ approximated by a sequence of $\mathcal{C}$ embeddings $g_i:s_r(B^m\times B^k)\to \mathbb{R}^n$ in $\mathcal{P}_r$. So $\psi_r(g_i)$ converge to $\psi_r(f_r)$ in $C^0$, which implies $\psi_r(f_r)$ is globally $\overline{\mathcal{C}}$ on $s_r(B^m(1/2)\times B^k)$. By Corollary \ref{cor_extend_cpt_supported_Cbar_homeo}, we conclude that $\psi(f)$ is globally $\overline{\mathcal{C}}$ on $B^m(1/2)\times B^k$.
\end{proof}

\subsubsection{Handle straightening revisited}

Now we can use the result from the previous section to prove a refinement of Sullivan's handle straightening.

Before stating and proving the refinement, we collect the following preliminary:

\begin{Lemma}
\label{lem_alex_isot_global_Cbar}
Suppose $f:B^n\to B^n$ is a globally $\overline{\mathcal{C}}$ homeomorphism that equals the identity near $\partial B^n$. Let $\hat f$ be the extension of $f$ to $\mathbb{R}^n$ by the identity. Consider the Alexander isotopy defined by
\begin{align}
   J_t(f):\mathbb{R}^n &\to \mathbb{R}^n \nonumber \\
        x &\mapsto t\cdot \hat{f}(x/t) \label{eqn_Alexander_isotopy}
\end{align}
 for $t\in (0,1]$, and $J_0(f)=\id$. Then $J_t(f)$ and $(J_t(f))^{-1}$ are both globally $\overline{\mathcal{C}}$ homeomorphisms for all $t$. 
\end{Lemma}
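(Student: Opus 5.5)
The plan is to handle $t=0$ and $t\in(0,1]$ separately, and to get the inverse statement from the forward one by symmetry. For $t=0$ we have $J_0(f)=\id$, which is trivially globally $\overline{\mathcal{C}}$ and so is its inverse. For $t\in(0,1]$, write $s_t(x)=tx$. Then $J_t(f)=s_t\circ\hat f\circ s_t^{-1}$, and $J_t(f)^{-1}=s_t\circ \hat f^{-1}\circ s_t^{-1}$. So it suffices to show that $\hat f$ and $\hat f^{-1}$ are globally $\overline{\mathcal{C}}$ homeomorphisms of $\mathbb{R}^n$, and that conjugating a globally $\overline{\mathcal{C}}$ homeomorphism of $\mathbb{R}^n$ by the linear map $s_t$ preserves this property.

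\emph{Step 1: $\hat f$ and $\hat f^{-1}$.} Since $f$ equals the identity near $\partial B^n$, its restriction to the open ball $\mathring{B}^n$ is a compactly supported embedding. It is globally $\overline{\mathcal{C}}$, being a restriction of a $C^0$--limit of $\mathcal{C}$ embeddings of $B^n$. Corollary \ref{cor_extend_cpt_supported_Cbar_homeo} then applies directly, with $U=\mathring{B}^n$, and shows that both $\hat f$ and $\hat f^{-1}$ are globally $\overline{\mathcal{C}}$ on $\mathbb{R}^n$.

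One subtlety is what "globally $\overline{\mathcal{C}}$" means for $f$ on the closed ball. If it means a limit of $\mathcal{C}$ embeddings of $B^n$, each such embedding extends to a $\mathcal{C}$ embedding of an open neighborhood. Restricting to $\mathring{B}^n$ is therefore harmless in either reading.

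\emph{Step 2: conjugation by dilations.} Let $g_i$ be $\mathcal{C}$ homeomorphisms, or more generally $\mathcal{C}$ embeddings, of $\mathbb{R}^n$ converging in $C^0$ to $\hat f$. Then $s_t\circ g_i\circ s_t^{-1}$ are $\mathcal{C}$ embeddings, since dilations are bi-Lipschitz and conformal and $\mathcal{C}$ is a pseudogroup. Moreover
\[
\|s_t g_i s_t^{-1}-s_t\hat f s_t^{-1}\|_{C^0}=t\,\|g_i-\hat f\|_{C^0}\to 0,
\]
so $J_t(f)$ is globally $\overline{\mathcal{C}}$. Applying the same argument with $\hat f^{-1}$ in place of $\hat f$ shows that $J_t(f)^{-1}$ is globally $\overline{\mathcal{C}}$.

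I expect essentially no real obstacle. The only point needing care is that the approximations in Corollary \ref{cor_extend_cpt_supported_Cbar_homeo} are uniform on all of $\mathbb{R}^n$, not just on compact sets. This holds because that corollary produces approximants that agree with the identity outside a fixed compact set.
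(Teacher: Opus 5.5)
Your proposal is correct and follows the paper's approach: the paper's proof is the one line ``immediate consequence of Corollary \ref{cor_extend_cpt_supported_Cbar_homeo}.'' You apply that corollary to $f|_{\mathring{B}^n}$ to get $\hat f$ and $\hat f^{-1}$, and then add the step the paper leaves implicit, namely that conjugating by $s_t$ keeps $\mathcal{C}$ approximants $\mathcal{C}$ and scales the $C^0$ error by $t$.
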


\begin{proof}
    This is an immediate consequence of Corollary \ref{cor_extend_cpt_supported_Cbar_homeo}.
\end{proof}

Now we give the promised statement and proof.

\begin{Proposition}
\label{prop_handle_straightening_in_Cbar}
In Theorem \ref{thm_handle_straighten_in_C}, one can choose $\mathcal{F}$ so that it satisfies the following additional property: if $f\in \mathcal{P}$ is globally $\overline{\mathcal{C}}$, then the isotopy $\mathcal{F}(f,-)$ is a universal $\overline{\mathcal{C}}$ isotopy.
\end{Proposition}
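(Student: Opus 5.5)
We take $\mathcal{P}$, $N$, $\psi$ from Proposition \ref{prop_match_global_Cbar_handle} instead of Theorem \ref{thm_match_handle_in_C}. We then define $\mathcal{F}$ by the same Alexander-trick formula used to derive Theorem \ref{thm_handle_straighten_in_C} from Theorem \ref{thm_match_handle_in_C}: $\mathcal{F}(f,t)=J_t(f)^{-1}\circ f$, where $J_t(f)(x)=t\cdot\hat\psi(f)(x/t)$ for $t>0$ and $J_0(f)=\id$. This $\psi$ still satisfies the conclusions of Theorem \ref{thm_match_handle_in_C}, so the argument already given shows that $\mathcal{F}$ satisfies Statements (1)--(4) of Theorem \ref{thm_handle_straighten_in_C}, after shrinking $\mathcal{P}$ if needed so the supports stay in a fixed compact subset of the interior of $B^m(1/2)\times B^k$ minus $N$. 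What remains is the additional property: if $f\in\mathcal{P}$ is globally $\overline{\mathcal{C}}$, then $\mathcal{F}(f,-)$ is a universal $\overline{\mathcal{C}}$ isotopy.

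Fix such an $f$ and set $V=f(B^m\times B^k)$. By Definition \ref{def_universal_Cbar_isotopy}, we must show that for each $t$ the map
\[
\mathcal{F}(f,t)\circ\mathcal{F}(f,0)^{-1}=J_t(f)^{-1}\circ f\circ f^{-1}=J_t(f)^{-1}|_V
\]
is globally $\overline{\mathcal{C}}$ on $V$. Here $\mathcal{F}(f,0)=J_1(f)^{-1}\circ f$ only after the reparametrization in which $t=0$ corresponds to the start; to avoid ambiguity we use the convention that the isotopy runs from $f$ to $J_0(f)^{-1}\circ f$. If instead one composes as $J_1(f)^{-1}\circ J_t(f)$ restricted to $V$, the same argument applies, since compositions of globally $\overline{\mathcal{C}}$ homeomorphisms of $\mathbb{R}^n$ are globally $\overline{\mathcal{C}}$, as noted in the proof of Lemma \ref{lem_coord_change_universal_Cbar_isotopy}.

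By Proposition \ref{prop_match_global_Cbar_handle}, $\psi(f)$ is a globally $\overline{\mathcal{C}}$ embedding of $B^m(1/2)\times B^k$ that equals the identity near the boundary. By the remark after that proposition (via Corollary \ref{cor_extend_cpt_supported_Cbar_homeo}), its extension $\hat\psi(f)$ is a globally $\overline{\mathcal{C}}$ homeomorphism of $\mathbb{R}^n$ with compact support. Lemma \ref{lem_alex_isot_global_Cbar} is stated for $B^n$, but its proof is only Corollary \ref{cor_extend_cpt_supported_Cbar_homeo} applied after rescaling, so it applies verbatim to compactly supported homeomorphisms of $\mathbb{R}^n$. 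Concretely, if $\mathcal{C}$ homeomorphisms $g_i\to\hat\psi(f)$, then $x\mapsto t g_i(x/t)$ are $\mathcal{C}$ homeomorphisms converging to $J_t(f)$, because rescaling preserves both the bi-Lipschitz and the quasi-conformal conditions. Hence $J_t(f)^{-1}$ is globally $\overline{\mathcal{C}}$ on $\mathbb{R}^n$, and restricting to $V$ gives the claim.

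The main point needing care is the $C^0$ convergence of the inverses on the whole of $\mathbb{R}^n$, not just on compact subsets. This is handled by Corollary \ref{cor_extend_cpt_supported_Cbar_homeo}(2): $J_t(f)^{-1}$ is compactly supported, and Lemma \ref{lem_C_approx_id} lets the approximants be taken equal to the identity outside a fixed compact set. The rest is bookkeeping to check that shrinking $\mathcal{P}$ keeps the neighborhood $N$ and the support statements of Theorem \ref{thm_handle_straighten_in_C} intact.
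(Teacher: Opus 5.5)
Your proposal is correct and follows essentially the same route as the paper. You take $\psi$ from Proposition \ref{prop_match_global_Cbar_handle}, note that $\mathcal{F}(f,t)\circ\mathcal{F}(f,0)^{-1}=J_t(f)^{-1}$, and conclude via the Alexander-isotopy lemma; spelling out the rescaling of the approximants, so that Lemma \ref{lem_alex_isot_global_Cbar} applies to $B^m(1/2)\times B^k$ rather than $B^n$, is a welcome extra detail. One harmless slip: since $J_0(f)=\id$, you already have $\mathcal{F}(f,0)=f$ and the isotopy ends at $J_1(f)^{-1}\circ f=\hat\psi(f)^{-1}\circ f$, so your remarks about a reparametrization and about the isotopy running to $J_0(f)^{-1}\circ f$ are garbled and unnecessary.
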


\begin{proof}
    Repeat the proof of Theorem \ref{thm_handle_straighten_in_C} from Theorem \ref{thm_match_handle_in_C}, with the neighborhood $\mathcal{P}$ and the map $\psi$ provided by Proposition \ref{prop_match_global_Cbar_handle}; let $\mathcal{F}$ be the resulting map. If $f$ is globally $\overline{\mathcal{C}}$, then $\psi(f)$ is globally $\overline{\mathcal{C}}$. 
    Let $J_t(\psi(f))$ $(t\in[0,1])$ be the Alexander isotopy of $\psi(f)$ as given by \eqref{eqn_Alexander_isotopy}.
    We have $\mathcal{F}(f,t)=(J_t(\psi(f)))^{-1}\circ f$. Hence
    $\mathcal{F}(f,t)\circ \mathcal{F}(f,0)^{-1} = J_t(\psi(f))^{-1}$. So $\mathcal{F}(f,-)$ 
    is a universal $\overline{\mathcal{C}}$ isotopy by Lemma \ref{lem_alex_isot_global_Cbar}.
\end{proof}

\subsubsection{Refined local contractibility }

Now we prove the refined contractibility result, which finishes the proofs of the main theorems. We repeat the statement of the proposition below.

\begin{repProposition}{prop_match_global_contractibility}
The map $\mathcal{P}$ in Theorem \ref{thm_contract_C_embedding} can be chosen so that the following additional condition holds: If $f\in \mathcal{P}$ satisfies $f|_A$ is globally $\overline{\mathcal{C}}$, then $\mathcal{F}(f,-)$ is a universal $\overline{\mathcal{C}}$ isotopy.
\end{repProposition}

\begin{proof}
    Repeat the proof of Theorem \ref{thm_contract_C_embedding} from Theorem \ref{thm_handle_straighten_in_C}, with the map $\mathcal{F}$ in Theorem \ref{thm_handle_straighten_in_C} provided by Proposition \ref{prop_handle_straightening_in_Cbar}. 
    
    We use induction to show that the isotopy in each step of the proof of Theorem \ref{thm_contract_C_embedding} is universal $\overline{\mathcal{C}}$ under this setup. By Lemma \ref{lem_compose_cptly_supported_isotopy}, if the first $k$ isotopies are all universal $\overline{\mathcal{C}}$, then $f$ is globally $\overline{\mathcal{C}}$ on $A$ after the first $k$ isotopies. Hence, by Proposition \ref{prop_handle_straightening_in_Cbar} and Lemma \ref{lem_universal_Cbar_extend_domain}, the isotopy in the $(k+1)^{th}$ step is also universal $\overline{\mathcal{C}}$. This finishes the induction argument.

    Therefore, the resulting isotopy is a composition of universal $\overline{\mathcal{C}}$ isotopies, which is itself universal $\overline{\mathcal{C}}$ by Lemma \ref{lem_compose_cptly_supported_isotopy}.
\end{proof}

\section{Appendix: More details of known arguments and further remarks}
\label{sec_appendix}
This section provides an exposition for the proof of the ``extension after restriction'' Theorem \ref{thm_match_handle_in_C}.
We include this for the ease of the reader and to make the argument more self-contained. 
In particular, we collect the details for arguments that we found difficult to locate in the literature in the form needed for our context.  
We also include some remarks we think are of value.

\subsection{Canonical Schoenflies}
    \label{subsec_canonical_schoenflies}

    To prove Theorem \ref{thm_match_handle_in_C},
    we need to show 
    that one may choose $\psi(f)$ that depends continuously on $f$ in the $C^0$ topology.
    This requires the following input, which can be regarded as a \emph{canonical} Schoenflies theorem in the category $\mathcal{C}$.  We give a reference and explain the proof for the convenience of the reader.  The proof is not actually referred to in subsequent arguments, so it can be skipped by an uninterested reader.
    
\begin{Theorem}[\cite{gauld1977quasiconformal}]
\label{thm_canonical_schoenflies}
    Let $\epsilon_0 \in (0,1/2)$.
    Then there exists an open neighborhood $\mathcal{P}$ of $\id\in \Emb(B^n([1-\epsilon_0,1]);\mathbb{R}^n)$ and a continuous map 
    \[
    \varphi: \mathcal{P}\to \Emb(B^n; \mathbb{R}^n),
    \]
    such that the following statements hold:
    \begin{enumerate}
        \item There exists $r\in (1-\epsilon_0, 1)$ such that for each $f\in \mathcal{P}$, we have $\varphi(f)|_{B^n([r,1])} = f|_{B^n([r,1])}$.
        \item $\varphi(\id)=\id$.
        \item $\varphi$ takes $\mathcal{C}$ embeddings to $\mathcal{C}$ embeddings.
    \end{enumerate}
\end{Theorem}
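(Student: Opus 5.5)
The plan is to derive the canonical Schoenflies statement from the handle-straightening machinery for the handle with $m=0$, $k=n$. The shell $B^n([1-\epsilon_0,1])$ plays the role of the attaching region $B^0\times B^n((1-\epsilon_0,1])$. The complementary piece will come from an Alexander-type coning, which is canonical and continuous in $f$. The key observation is that an embedding $f$ of the shell that is $C^0$--close to $\id$ is a collared embedding of an annulus. Its inner and outer images bound a region that is, up to a small error, the standard annulus, and so the inner ball can be filled in. The difficulty is doing this filling continuously in $f$ and inside the category $\mathcal{C}$.

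First I fix radii $1-\epsilon_0<r_1<r<1$. The target is an embedding $\varphi(f)$ of $B^n$ that agrees with $f$ on $B^n([r,1])$ and is $\mathcal{C}$ when $f$ is. I will proceed by induction on $n$ via a radial product structure. Write $B^n([1-\epsilon_0,1])\cong S^{n-1}\times[1-\epsilon_0,1]$ and cover $S^{n-1}$ by finitely many charts. This gives a handle decomposition of a collar $S^{n-1}\times[r_1,r_2]$ by handles of the form $B^j\times B^{n-j}$ whose cores lie in $S^{n-1}$. Applying Theorem \ref{thm_handle_straighten_in_C} handle by handle, in order of index and in a small neighborhood of the sphere $|x|=r_1$, produces a canonical isotopy. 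Here canonical means continuous in $f$, trivial at $\id$, and $\mathcal{C}$-preserving. The isotopy is supported in the interior of the shell, and it makes $f$ equal to $\id$ near $|x|=r_1$ while leaving $f$ unchanged on $B^n([r,1])$. This is exactly the argument deducing Theorem \ref{thm_contract_C_embedding} from Theorem \ref{thm_handle_straighten_in_C}, run on a neighborhood of the round sphere $S_{r_1}$ rather than on an open set. The end result $f_1$ is therefore an embedding of the shell that equals $\id$ on $B^n([r_1-\eta,r_1+\eta])$ and equals $f$ on $B^n([r,1])$.

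I then define $\varphi(f)$ to be $\id$ on $B^n(r_1)$ and $f_1$ on $B^n([r_1,1])$. These pieces glue to an embedding because $f_1=\id$ near $|x|=r_1$. Injectivity across the two pieces follows because $f_1$ maps the shell into the complement of $B^n(r_1-\eta)$, which holds by a degree argument as in Lemma \ref{lem_inverse_top_embedding} once $f$ is close enough to $\id$. Continuity of $\varphi$, the property $\varphi(\id)=\id$, and preservation of $\mathcal{C}$ are then inherited from the corresponding properties of the handle-straightening map, Theorem \ref{thm_handle_straighten_in_C} (3)--(4). The neighborhood $\mathcal{P}$ is taken small enough that every straightening step applies.

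The main obstacle is circularity. In the paper's logical order, Theorem \ref{thm_canonical_schoenflies} is the input used to prove Theorem \ref{thm_match_handle_in_C}, and hence handle straightening. To avoid this, the handle straightening used above must be applied only to handles of dimension $n-1$, namely the handles of $S^{n-1}\times$ interval, which reduce to straightening in $\mathbb{R}^{n-1}\times\mathbb{R}$. I would organize the argument as a simultaneous induction on $n$. Canonical Schoenflies in dimension $n$ uses the extension-after-restriction result for handles of the thin collar, whose cores have dimension at most $n-1$. These are handles with $m\ge1$, and their proof, via Sullivan's hyperbolic torus trick, uses Schoenflies only in lower dimensions. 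Checking that the torus-trick step for $m\ge1$ genuinely needs only lower-dimensional canonical Schoenflies, with the constants uniform, is where I expect the real work to lie. As a fallback, I would cite Gauld's direct construction \cite{gauld1977quasiconformal} for this step.
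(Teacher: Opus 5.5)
\textbf{There is a genuine gap: your main route is circular.} The induction on $n$ that you propose to break the circularity rests on a false claim. Every handle in your decomposition of the collar around $|x|=r_1$ is an $n$--dimensional handle $B^m\times B^k$ in $\mathbb{R}^n$; the fact that the handles come from $S^{n-1}\times$ interval does not lower the dimension. The torus-trick proof of extension after restriction for such a handle needs canonical Schoenflies in dimension $n$, not in lower dimensions.

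Concretely, Step 3 of the proof of Theorem \ref{thm_match_handle_in_C} must extend $\psi_2(f)$, which is only defined on a collar of $\partial(D(1)\times B^k)$, across the $n$--ball $D(1)\times B^k$. This is done by applying Theorem \ref{thm_canonical_schoenflies} to that $n$--ball. Already for the index-$0$ handles of your decomposition ($k=0$, $m=n$), the problem there is to extend a near-identity embedding of $Q^n(3/4)$ across the ball $D(3/4)\subset Q^n$. That is literally the statement you are trying to prove. The product structure $D(1)\times B^k$ gives no dimension reduction, because the boundary data lives on the whole $(n-1)$--sphere $\partial(D(1)\times B^k)$. So no simultaneous induction closes: straightening a near-identity embedding near a round sphere is essentially equivalent to canonical Schoenflies itself.

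The deduction ``straighten near $|x|=r_1$, then fill in by $\id$'' would be fine if handle straightening were available independently. Injectivity then follows from $f_1=\id$ on the thin annulus together with connectedness of $f_1(B^n((r_1+\eta,1]))$. Your sentence that $f_1$ maps the whole shell off $B^n(r_1-\eta)$ is not correct, however.

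\textbf{What is missing is a filling construction that uses no handle theory.} This is exactly your ``fallback'', and it is the paper's entire proof. One composes with a radial PL reparametrization $\mu$ and the rescaling $s_{1+\epsilon_0/2}$ to get the embedding $e=s_{1+\epsilon_0/2}\circ f\circ\mu$ of $B^n([1/2,1])$. One requires $B^n([b,1])\subset e(B^n((c,1)))$ and $\partial B^n(a)\subset e(B^n((1/2,c)))$; this open condition defines $\mathcal{P}$.

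Gauld's Lemma 9 \cite{gauld1977quasiconformal} then gives an explicit extension $\hat e$ of $e|_{B^n([c,1])}$ to $B^n$. This extension depends continuously on $e$ and is quasi-conformal when $e$ is. Choosing the constants so that $b/a=2c$ makes $\hat e$ bi-Lipschitz whenever $e$ is.

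Finally, one sets $\varphi_1(f)=s_{1+\epsilon_0/2}^{-1}\circ\hat e\circ\mu^{-1}$ and normalizes by $\varphi(f)=\varphi_1(f)\circ\varphi_1(\id)^{-1}$. This gives $\varphi(\id)=\id$ while keeping $\varphi(f)=f$ on $B^n([\mu(c),1])$. This theorem is the independent input that makes the hole-filling step of Sullivan's torus trick possible, so deriving it from handle straightening only moves the difficulty back to where it started.
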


\begin{proof}
    This is a consequence of \cite[Lemma 9]{gauld1977quasiconformal}; see also \cite[Lemma 3.2]{tukia1981lipschitz}. Note that the notation $[a,b]$ in \cite{gauld1977quasiconformal} corresponds to our notation $B^n([a,b])$, and the open and half-open interval notations in \cite{gauld1977quasiconformal} are defined similarly. The notation $[a]$ in \cite{gauld1977quasiconformal} corresponds to our notation $\partial B^n(a)$. The notation $\overline{B}^n$ in \cite{gauld1977quasiconformal} corresponds to our notation $B^n$. 
    
    For $r\neq 0$, let $s_r:\mathbb{R}^n\to \mathbb{R}^n$ denote the homeomorphism that takes $x$ to $rx$. 
    Let $\mu:[0,1]\to[0,1]$ be a piecewise-linear homeomorphism such that
    \[
    \mu(0) = 0, \quad \mu(1/2) = 1-\epsilon_0/2, \quad \mu(1) = 1.
    \]
    Let $a,b,c$ be constants such that $c\in(1/2,1)$ and 
    \begin{equation}\label{eqn_canonical_schoeflies_abc}
    0<1-\epsilon_0^2/4<a<(1+\epsilon_0/2)\,\mu(c)<b<1.
    \end{equation}
    We also abuse notation and use $\mu$ to denote the homeomorphism
    \[\mu:B^n\to B^n\]
    defined by the property that if $r\in [0,1]$ and $x\in \partial B^n$, then $\mu(rx) = \mu(r)x$. 

    We note that it is possible to choose $a,b,c$ such that $b/a = 2c$. In fact, for every $c$ in $(1/2,\mu^{-1}(\frac{1}{1+\epsilon_0/2}))$, the possible values of $b/a$ for pairs $(a,b)$ satisfying \eqref{eqn_canonical_schoeflies_abc} form the interval $(1,\frac{1}{1-\epsilon_0^2/4})$. So there exist $a,b$ satisfying \eqref{eqn_canonical_schoeflies_abc} such that $2c=b/a$ if we take $c<\frac{1}{1-\epsilon_0^2/4}$. 
    
    Let $\mathcal{P}$ be the set consisting of all $f\in \Emb(B^n([1-\epsilon_0,1]);\mathbb{R}^n)$ such that the embedding $e= s_{1+\epsilon_0/2}\circ f\circ \mu: B^n([1/2,1])\to \mathbb{R}^m$ satisfies
    \[
        B^n([b,1]) \subset e(B^n((c,1))),\quad \partial B^n(a)\subset e(B^n((1/2,c))). 
    \]
    Then $\mathcal{P}$ is an open set that contains $\id$. By \cite[Lemma 9]{gauld1977quasiconformal}, there exists an embedding 
    $\hat{e}: B^n \to \mathbb{R}^n$
    that equals $e$ on $B^n([c,1])$ and is quasi-conformal when $e$ is quasi-conformal. 
    It is clear from the construction that $\hat e$ depends continuously on $e$ with respect to the $C^0$--topology.
    If we choose $a,b,c$ such that $2c=b/a$, then it is also straightforward to verify from the construction that $\hat e$ is bi-Lipschitz if $e$ is bi-Lipschitz. Define $\varphi_1(f) = s_{1+\epsilon_0/2}^{-1}\circ \hat e\circ \mu^{-1}:B^n\to \mathbb{R}^n$. Then $\varphi_1(f)$ is an embedding of $B^n$ that depends continuously on $f$ in the $C^0$--norm, and $\varphi_1(f) = f$ on $\mu(B^n([c,1]))$, and $\varphi_1(f)$ is a $\mathcal{C}$ embedding if $f$ is. By Lemma \ref{lem_extend_emb_cpt_supported_difference}, $\varphi_1(\id)$ is a homeomorphism of $B^n$. Hence $\varphi(f) = \varphi_1(f)\circ \varphi_1(\id)^{-1}$ satisfies the desired properties. 
\end{proof}

\begin{Corollary}
Let $\mathcal{P}$, $\varphi$ be as in Theorem \ref{thm_canonical_schoenflies}.
   If $f\in \mathcal{P}$ is a globally $\overline{\mathcal{C}}$ embedding, then $\varphi(f): B^n\to \mathbb{R}^n$ is a globally $\overline{\mathcal{C}}$ embedding.
\end{Corollary}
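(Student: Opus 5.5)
The plan is to approximate $f$ by $\mathcal{C}$ embeddings, push the approximants through $\varphi$, and use continuity of $\varphi$. Since $f$ is globally $\overline{\mathcal{C}}$, there is a sequence of $\mathcal{C}$ embeddings $g_i\colon B^n([1-\epsilon_0,1])\to\mathbb{R}^n$ converging to $f$ in $C^0$. (Recall that a globally $\overline{\mathcal{C}}$ embedding of a non-open set is by definition a $C^0$--limit of $\mathcal{C}$ embeddings of that set.) Because $\mathcal{P}$ is open in the $C^0$--topology and $f\in\mathcal{P}$, we have $g_i\in\mathcal{P}$ for all $i$ sufficiently large. Discarding finitely many terms, we may assume every $g_i$ lies in $\mathcal{P}$.

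Next we apply the properties of $\varphi$ from Theorem \ref{thm_canonical_schoenflies}.
\begin{enumerate}
\item By Statement (3), each $\varphi(g_i)$ is a $\mathcal{C}$ embedding of $B^n$.
\item Since $\varphi$ is continuous from $\mathcal{P}$ to $\Emb(B^n;\mathbb{R}^n)$ with the $C^0$--topology, $\varphi(g_i)\to\varphi(f)$ in $C^0$.
\end{enumerate}
Hence $\varphi(f)$ is a $C^0$--limit of $\mathcal{C}$ embeddings of $B^n$. This is exactly the definition of a globally $\overline{\mathcal{C}}$ embedding of the non-open set $B^n$.

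The only point needing care is what ``$\mathcal{C}$ embedding'' means for the compact sets involved. For a non-open set, a $\mathcal{C}$ embedding is one that extends to a $\mathcal{C}$ embedding of an open neighborhood. We must check that Statement (3) of Theorem \ref{thm_canonical_schoenflies} is read in this sense, so that $\varphi(g_i)$ qualifies. This follows from the construction in its proof. There, $\hat e$ is produced by \cite[Lemma 9]{gauld1977quasiconformal} and is quasi-conformal (respectively bi-Lipschitz) whenever $e$ is. It is then composed with $\mu^{-1}$ and the rescaling $s_{1+\epsilon_0/2}^{-1}$, both piecewise-linear or linear and hence $\mathcal{C}$, and finally with $\varphi_1(\id)^{-1}$, which is also $\mathcal{C}$. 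If one wants a more explicit statement, Lemma \ref{lem_local_C_implies_global_C} upgrades ``locally $\mathcal{C}$'' to $\mathcal{C}$ on compact subsets. I expect no genuine obstacle beyond this bookkeeping: the corollary is essentially that a continuous map preserving $\mathcal{C}$ preserves $C^0$--limits of $\mathcal{C}$ maps.
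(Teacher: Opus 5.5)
Your proposal is correct and is essentially the paper's proof: approximate $f$ by $\mathcal{C}$ embeddings $f_i\in\mathcal{P}$, and then $\varphi(f_i)$ are $\mathcal{C}$ embeddings converging to $\varphi(f)$ by continuity of $\varphi$. Your appeal to the openness of $\mathcal{P}$, which guarantees $f_i\in\mathcal{P}$ for large $i$, supplies a step the paper leaves implicit.
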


\begin{proof}
   Suppose $f_i\in \mathcal{P}$ are $\mathcal{C}$ embeddings that converge to $f$ in $C^0$, then $\varphi(f_i)$ are $\mathcal{C}$ embeddings that converge to $\varphi(f)$ in $C^0$.
\end{proof}

\subsection{Sullivan's arguments and the work of Kirby--Edwards}

We now explain Sullivan's proof of Theorem \ref{thm_match_handle_in_C}.  These arguments have antecedents in the work of Kirby--Edwards \cite{edwards1971deformations,kirby1969stable}, which we will also partly explain as context.

\subsubsection{A warm-up case and the torus trick}
\label{subsec_zero_handle_straightening}

To illustrate some of the main ideas, and in particular the famous ``torus trick" that we will need to build on, we first prove a weaker variant of ``extension after restriction''
due to Kirby \cite{kirby1969stable}.  Strictly speaking, we will not literally need this in future arguments, so it can be skipped by the uninterested reader; however, it gives useful context in understanding Sullivan's arguments and our adaptation of them.

\begin{Proposition}[\cite{kirby1969stable}]
\label{prop_Kirby_zero_handle_straightening}
    There exists a constant $\epsilon>0$ such that the following holds. If $f:B^n\to \mathbb{R}^n$ is an embedding such that $\|f-\id\|_{C^0}<\epsilon$, then there exists an embedding $g:B^n\to \mathbb{R}^n$ such that 
    \begin{enumerate}
        \item $g = \id$ on $B^n([2/3,1])$.
        \item $g = f$ on $B^n(1/3)$.
    \end{enumerate}
\end{Proposition}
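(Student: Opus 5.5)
The plan is Kirby's torus trick. Let $T^n=\mathbb{R}^n/\mathbb{Z}^n$. Fix a small disk $D\subset T^n$ identified, after rescaling, with a neighborhood of $B^n$, and an immersion $\alpha: T^n\setminus \pt \to \mathbb{R}^n$ that restricts to this identification on $D$. Such an immersion exists because the punctured torus is parallelizable and open, so the Smale--Hirsch theorem applies. One can also write it down explicitly by induction on $n$.

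\textbf{Step 1 (pull back to the punctured torus).} If $\|f-\id\|_{C^0}$ is small, the embedding $f\circ\alpha$ lifts along $\alpha$ on a large compact subset of $T^n\setminus\pt$. This gives an embedding $\tilde f$ of a neighborhood of $T^n\setminus \mathring{D}'$, for a small disk $D'$ around the puncture, into $T^n\setminus\pt$, with $\alpha\circ\tilde f=f\circ\alpha$ and $\tilde f$ close to the inclusion.

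\textbf{Step 2 (make it the identity near the puncture and extend).} On a collar of $\partial D'$, the map $\tilde f$ is an embedding of an annulus close to the identity. Apply the Schoenflies theorem, for instance Theorem~\ref{thm_canonical_schoenflies} in the topological category, to extend $\tilde f$ across $D'$. The result is a homeomorphism $\hat f$ of $T^n$ that is close to the identity and agrees with $\tilde f$ on the image region of $B^n$.

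\textbf{Step 3 (unwrap to $\mathbb{R}^n$).} Lift $\hat f$ to a homeomorphism $\bar f$ of $\mathbb{R}^n$ that commutes with $\mathbb{Z}^n$-translations. Since $\hat f$ is close to the identity, $\bar f$ is bounded distance from $\id$. Conjugate by a radial homeomorphism $\rho$ that shrinks $\mathbb{R}^n$ onto $\mathring{B}^n(2/3)$ and is the identity on $B^n(1/3)$. Because $\bar f$ moves points a bounded amount and $\rho$ compresses distances near the boundary, the map $\rho\bar f\rho^{-1}$ extends by the identity outside $B^n(2/3)$. This gives $g$ with $g=\id$ on $B^n([2/3,1])$.

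\textbf{Step 4 (the inner condition).} The map $g$ agrees with $\bar f$ on $B^n(1/3)$. By construction $\bar f$ equals $f$ on the fundamental-domain copy of $B^n$ covering $D$, after the rescalings are arranged so that this copy contains $B^n(1/3)$ and $\rho=\id$ there. So $g=f$ on $B^n(1/3)$.

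The main obstacle is Step 2, extending across the puncture. In dimension $\ge 5$ Kirby uses the stable homeomorphism machinery. Here, however, only the annulus-extension (Schoenflies) form is needed, because we extend over a disk rather than straighten a general handle. So the key point is to check that $\tilde f$ near $\partial D'$ is close enough to the identity for the chosen Schoenflies extension to apply. I would get this from uniform continuity, by choosing $\epsilon$ small relative to the collar width.
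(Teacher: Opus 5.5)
Your proposal is the same Kirby torus-trick argument the paper gives: immerse the punctured torus into the ball, lift $f$ to the torus, fill in the small disk by Schoenflies, lift to the universal cover, compress radially into $\mathring{B}^n(2/3)$, and extend by the identity.

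The normalization behind Step 4 needs repair, because as written it is inconsistent and the last equality fails. The disk $D$ cannot correspond to a neighborhood of $B^n$, since $f\circ\alpha$ must be defined on $\alpha(D)$. It has to be sent onto a ball inside $B^n$, such as $B^n(1/2)$, as in the paper. You then need:
\begin{itemize}
\item the covering coordinates on the chosen lift of $D$ to agree exactly with $B^n(1/2)$;
\item $\epsilon<1/6$, so that $f(B^n(1/3))\subset B^n(1/2)$;
\item the lift $\bar f$ to be the one close to the identity.
\end{itemize}
Also, $\rho$ must be the identity on all of $B^n(1/2)$, not just on $B^n(1/3)$. For $x\in B^n(1/3)$ one has $g(x)=\rho(\bar f(x))$, and $\bar f(x)$ can leave $B^n(1/3)$. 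The paper gets the exact coordinate match by choosing $\eta$ via the smooth disc theorem. You can instead build it into $\alpha$, using the same input.
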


\begin{proof}
We will focus on explaining the main idea of the argument, and will omit some details in point-set topology.  
Let $T^n$ be the $n$--dimensional torus, and let 
    \[
    \iota:B^n\hookrightarrow T^n
    \]
    be a smooth embedding.
    By the Hirsch--Smale theorem, there exists an immersion 
    \[
    i: T^n\setminus \iota(\mathring{B}(1/4)) \to B^n.
    \]
By a straightforward argument in point-set topology, when $\|f-\id\|_{C^0}$ is sufficiently small, there exists an embedding
\[
    f_1: T^n \setminus \iota(\mathring{B}(1/2))\to T^n \setminus \iota(\mathring{B}(1/4)),
    \]
    such that the following diagram commutes:
\[
\begin{tikzcd}
    T^n \setminus \iota(\mathring{B}(1/2)) \arrow[r, "f_1"] \arrow[d, "i"] & T^n \setminus \iota(\mathring{B}(1/4)) \arrow[d, "i"]\\
    B^n \arrow[r, "f"] & \mathbb{R}^n.
\end{tikzcd}
\]

Consider the restriction of $f_1$ to $T^n \setminus \iota(\mathring{B}(3/4))$. The image of the boundary is then a bi-collared embedded sphere in $T^n$, so it bounds a ball by the Schoenflies theorem. As a result, there exists a homeomorphism
\[
f_2:T^n\to T^n
\]
such that $f_1= f_2$ on $T^n \setminus \iota(\mathring{B}(3/4))$.

Let $\tilde{f}_2:\mathbb{R}^n\to \mathbb{R}^n$ be a lifting of $f_2$ to the universal cover of $T^n$.  Let $\eta:\mathbb{R}^n\to \mathring{B}^n(2/3)$ be a homeomorphism that is radial near infinity.
Then 
\[
f_3 = \eta\circ \tilde{f}_2\circ \eta^{-1}: \mathring{B}^n(2/3) \to \mathring{B}^n(2/3)
\]
is a homeomorphism. Moreover, because $|\tilde{f}_2(x)-x|$ is bounded for $x\in \mathbb{R}^n$, the map $f_3$ extends continuously to the boundary of $B^n(2/3)$ by the identity.  Let $g$ be the extension of $f_3$ to $B^n$ by the identity, then $g$ is a homeomorphism of $B^n$ and $g = \id$ on $B^n([2/3,1])$. 

For the rest of the proof, we show that under a suitable choice of $i$, $\eta$, and the lifting of $f_2$, we have $g=f$ on $B^n(1/3)$. Let us first explain the general idea. We may choose $i$ so that there is a closed ball $D\subset T^n\setminus \iota(B(1))$ that is mapped diffeomorphically to $B^n(1/2)$. Let $D'\subset D$ be the preimage of $B^n(1/3)$ in $D$. If we assume $\|f-\id\|_{C^0}$ is sufficiently small so that $f(B^n(1/3))\subset B^n(1/2)$, then $f_1(D')\subset D$. In this case, all information of $f|_{B^n(1/3)}$ is captured by $f_1|_{D'}$ and subsequently transferred to $g$. Thus, for a carefully chosen $\eta$, the map $g$ will be equal to $f$ on $B^n(1/3)$. Here are the details of the proof: 

We have a commutative diagram
\[
\begin{tikzcd}
    D' \arrow[r, "f_1"] \arrow[d, "i"', "\cong"] & D \arrow[d, "i"', "\cong"]\\
    B^n(1/3) \arrow[r, "f"] & B^n(1/2).
\end{tikzcd}
\]
Let $\widetilde{D}$ be a connected component of the lifting of $D$ in $\mathbb{R}^n$, and let $\widetilde{D}'\subset \widetilde{D}$ be the corresponding component of the lifting of $D'$. We may choose $\tilde{f}_2$ so that $\tilde{f}_2(\widetilde{D}')\subset \widetilde{D}$. Choose $\eta$ so that the composition 
\begin{equation}
\label{eqn_Kirby_zero_handle_straighten_def_eta}
B^n(1/2) \xrightarrow[\cong]{(i|_D)^{-1}} D \cong \widetilde{D} \xrightarrow{\eta} \mathring{B}^n(2/3) 
\end{equation}
is the identity embedding. Such $\eta$ exists because every smooth embedding of $B^n$ in $\mathbb{R}^n$ is smoothly isotopic to the standard embedding. We have a commutative diagram
\[
\begin{tikzcd}
    B^n(1/3) \arrow[r, "(i|_D)^{-1}", "\cong"'] \arrow[d, "f"] & D' \arrow[d, "f_1=f_2"] & \widetilde{D}' \arrow[l, "\cong "'] \arrow[d, "\tilde{f}_2"] \arrow[r, "\eta"] & \mathring{B}^n(2/3) \arrow[d, "g"]\\
    B^n(1/2) \arrow[r, "(i|_D)^{-1}", "\cong"'] & D  & \widetilde{D} \arrow[l, "\cong "'] \arrow[r, "\eta"] & \mathring{B}^n(2/3).
\end{tikzcd}
\]
Therefore, $g=f$ on $B^n(1/3)$.
\end{proof}

\begin{remark}
    If $f$ is a diffeomorphism or symplectomorphism, the map $g$ produced by the proof of Proposition \ref{prop_Kirby_zero_handle_straightening} may not be a diffeomorphism or symplectomorphism. Therefore, the argument of this paper cannot prove an analogous result to Theorems \ref{thm_main_bilip} that replaces ``bi-Lipschitz structures'' with smooth or symplectic structures. In fact, by Remark \ref{rmk_n=4} and the smoothing theory for PL homeomorphisms, every topological manifold with dimension $n \neq 4$ admits a chart whose transition functions are $C^0$--limits of diffeomorphisms, but there exist topological $n$--manifolds that do not support smooth structures. 
\end{remark}

\subsubsection{The idea of the proof of extension after restriction; further inputs for the proof}

We can now explain the idea behind Sullivan's proof of Proposition \ref{thm_match_handle_in_C}.   If $f$ is a $\mathcal{C}$ embedding, then in general, the map $g$ given by the above proof of Proposition  \ref{prop_Kirby_zero_handle_straightening} may not be a $\mathcal{C}$ embedding. However, one may modify the proof of Proposition  \ref{prop_Kirby_zero_handle_straightening} so that the construction of $g$ preserves $\mathcal{C}$--smoothness. This is one of the central arguments of \cite{sullivan1979hyperbolic}: If we replace $T^n$ with a closed connected oriented hyperbolic manifold $Q^n$ such that the tangent bundle of $Q^n\setminus \pt$ is trivial, then the Hirsch--Smale theorem still applies to give the immersion $i$. In this case, the universal cover of $Q^n$ is isometric to the hyperbolic space form $\mathbb{H}^n$, which is conformally isomorphic to $\mathring{B}^n$.  If the map $\eta$ in the proof of Proposition  \ref{prop_Kirby_zero_handle_straightening} is a quasi-conformal diffeomorphism, then the construction of $g$ preserves $\mathcal{C}$--smoothness. 

We collect the following antecedent facts here needed to make the sketch in the previous paragraph rigorous, 
and we make some remarks about their references.  
The first proposition establishes the existence of the desired hyperbolic manifold.

\begin{Proposition}[\cite{sullivan1979hyperbolic}]
\label{prop_almost_para_hyperbolic}
    For each $m\ge 2$, there exists a closed, connected, $m$--dimensional hyperbolic manifold $Q^m$ such that the tangent bundle of $Q^m\setminus \pt$ is trivial.
\end{Proposition}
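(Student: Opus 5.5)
The plan follows Sullivan's original argument, which combines arithmetic hyperbolic manifolds with the Hirsch--Smale theory of stably parallelizable manifolds. Parallelizability is reduced to two ingredients: triviality of the stable tangent bundle of a suitable finite cover, and the standard fact that a stably trivial bundle over a complex with a point removed is actually trivial, once the dimension hypothesis is arranged.

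\emph{Step 1.} First produce a closed hyperbolic $m$--manifold $Q_0$. For $m\ge 2$ take an arithmetic cocompact torsion-free lattice $\Gamma_0\subset SO(m,1)$. This can be the unit group of a quadratic form such as $-\sqrt{2}x_0^2+x_1^2+\dots+x_m^2$ over $\mathbb{Z}[\sqrt 2]$, made torsion free by Selberg's lemma via passing to a congruence subgroup. Then set $Q_0=\Gamma_0\backslash\mathbb{H}^m$.

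\emph{Step 2.} Next, find a finite cover $Q\to Q_0$ whose tangent bundle is stably trivial. This is the key step, and I expect it to be the main obstacle.

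The approach is via flat bundles and the compact dual. The stable tangent bundle satisfies $TQ_0\oplus\mathbb{R}\cong \Gamma_0\backslash(\mathbb{H}^m\times\mathbb{R}^{m,1})$. This is a flat $SO(m,1)$--bundle coming from the inclusion $\Gamma_0\subset SO(m,1)$. Because the lattice is arithmetic, it lies in the $\mathbb{Z}[\sqrt2]$--points of an algebraic group $G$. The Galois conjugate embedding, via $\sqrt 2\mapsto-\sqrt 2$, lands in the compact group $SO(m+1)$.

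Sullivan's argument is as follows. The Pontryagin and Stiefel--Whitney classes of the flat bundle are controlled by the compact twin, so they are torsion. In fact the whole stable class in $KO(Q_0)$ is torsion, since the rational characteristic classes of flat bundles vanish by Chern--Weil. A torsion element of $\widetilde{KO}(Q_0)$ can then be killed by passing to a congruence cover. The justification is that the classifying map factors through $B\Gamma_0\to BG(\mathbb{Z}[\sqrt2])$. On the subgroup of level $p^k$, the induced bundle becomes trivial after completion, by a Deligne--Sullivan style argument using the finite groups $G(\mathbb{Z}[\sqrt 2]/p^k)$.

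I would cite Deligne--Sullivan ("Fibrés vectoriels complexes à groupe structural discret") for the complex version. For the real case I would apply it to $TQ\otimes\mathbb{C}$, then treat the remaining 2-torsion ambiguity with a further finite cover, using residual finiteness and the finiteness of $\widetilde{KO}$-torsion.

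\emph{Step 3.} Finally, deduce that $TQ|_{Q\setminus\pt}$ is trivial. The space $Q\setminus\pt$ deformation retracts onto an $(m-1)$--dimensional complex $K$. On $K$, the bundle $TQ|_K$ is stably trivial and has rank $m>\dim K$. Stable triviality then implies triviality, since $[K,BO(m)]\to[K,BO]$ is injective for $\dim K<m$. Because $Q$ is still closed, connected and hyperbolic, with its metric pulled back from $Q_0$, it satisfies the proposition.
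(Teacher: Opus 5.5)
The paper does not prove this proposition. It cites Sullivan, and its remark notes that the $4$-dimensional application only needs $m=2,3,4$, where elementary examples suffice: any closed oriented hyperbolic surface or $3$-manifold, and any closed oriented \emph{spin} hyperbolic $4$-manifold such as the Davis manifold. (Over the $3$-dimensional spine of $Q^4\setminus\pt$, the only obstructions to trivializing the tangent bundle are $w_1$ and $w_2$.)

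Your overall route is Sullivan's, and Steps 1 and 3 are fine: $BO(m)\to BO$ is $m$-connected, so stable triviality gives triviality over an $(m-1)$-complex. But Step 2 has a genuine gap in passing from complex to real bundles. Applying Deligne--Sullivan to $E\otimes\mathbb{C}$, with $E=TQ\oplus\mathbb{R}$, only tells you that on some finite cover $Q_1$ the class $x=[E]-(m+1)$ lies in $\ker\bigl(c\colon \widetilde{KO}(Q_1)\to\widetilde{K}(Q_1)\bigr)$, hence $2x=0$. Complex triviality genuinely loses information here; already the M\"obius bundle over $S^1$ has trivial complexification. Your remedy, ``residual finiteness and the finiteness of $\widetilde{KO}$-torsion'', gives no mechanism for killing $x$. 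Residual finiteness separates elements of $\pi_1$, not $KO$-classes, and each further cover has its own torsion subgroup, so nothing you say forces $x$ to vanish upstairs. What makes such a class die on a finite cover is flatness, and it has to be used at the real level, not only after complexifying.

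One way to do this: the complex-bilinear extension $g_{\mathbb{C}}$ of the Lorentz form makes $E\otimes\mathbb{C}$ a flat $SO_{m+1}(\mathbb{C})$-bundle. Since $SO_{m+1}(\mathbb{C})\simeq SO(m+1)$, this orthogonal complex bundle determines a real bundle up to isomorphism. That real bundle is its compact real form $E_+\oplus iE_-\cong E$, where $E=E_+\oplus E_-$ is a splitting into $g$-positive and $g$-negative parts. So what you actually need is that this flat $SO_{m+1}(\mathbb{C})$-bundle becomes trivial, as a topological $SO_{m+1}(\mathbb{C})$-bundle, on some finite cover. That is a Deligne--Sullivan-type vanishing for flat orthogonal bundles, with the prime $2$ handled; triviality of the underlying $GL_{m+1}(\mathbb{C})$-bundle is strictly weaker. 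Alternatively, quote the existence of stably parallelizable finite covers of closed hyperbolic manifolds directly from Sullivan, which is in effect what the paper does.
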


\begin{remark}
    Proposition \ref{prop_almost_para_hyperbolic} is a deep result whose proof involves mod p algebraic geometry. However, if we only consider $4$--manifolds, then we only need Proposition \ref{prop_almost_para_hyperbolic} for the cases $m=2,3,4$. In these cases, the desired manifold $Q^m$ can be constructed directly:  If $m=2$ or $3$, every closed, connected, oriented hyperbolic $m$--manifold satisfies the desired property. If $m=4$, then a closed, connected, oriented hyperbolic $4$--manifold satisfies the desired property if and only if it is spin, and one such example is the Davis hyperbolic $4$--manifold (see \cite{ratcliffe2001davis}).
\end{remark}

The next proposition collects what we need to know concerning the existence of the aforementioned conformal isometry between the universal cover of $Q^n$ and the open disc.  

To be precise, for $m\ge 2$, let $Q^m$ be a fixed hyperbolic manifold given by Proposition \ref{prop_almost_para_hyperbolic}. For $m=1$, let $Q^m=S^1$. Let $\widetilde{Q}^m$ denote the universal cover of $Q^m$. For each $m\ge 1$, fix a smooth embedding $\iota_0:B^m(1/2)\hookrightarrow Q^m$, and let $\tilde{\iota}_0$ be a fixed lifting of $\iota_0$ to $\widetilde{Q}^m$. If $n>m$, we view $\mathbb{R}^m$ as a subspace of $\mathbb{R}^n$ by the standard embedding, and view $B^m$ as a subspace of $B^n$ accordingly.

\begin{Proposition}[\cite{sullivan1979hyperbolic}, see also {\cite[Section 2]{tukia1981lipschitz}}]
    \label{prop_coordinate_change_Bk_times_Qm}
    Suppose $k\ge 0$, $m\ge 1$, $n=m+k$. There exists a quasi-conformal diffeomorphism 
    \[
    \mathscr{U}: \widetilde{Q}^m \times B^k \to B^n\setminus \partial B^m,
    \]
    such that the following statements hold. 
    \begin{enumerate}
        \item Suppose $F:Q^m\times B^k\times I\to Q^m\times B^k$ is a homotopy relative to a neighborhood of $Q^m\times \partial B^k$ with $F(-,0) = \id$, $F(-,1) = f$, such that $f$ is a homeomorphism. Let $\widetilde{F}: \widetilde{Q}^m\times B^k\times I\to \widetilde{Q}^m\times B^k$ be the lifting\footnote{The existence and uniqueness of $\widetilde{F}$ follow from the homotopy lifting property of covering spaces.} of $F$ such that $\widetilde{F}(-,0)=\id$.
         Let $\tilde{f} = \widetilde{F}(-,1)$, then $\tilde{f}$ is a homeomorphism and a lifting of $f$. 
        Let $\hat f$ be the extension of 
        \[
        \mathscr{U}\circ \tilde{f} \circ \mathscr{U}^{-1}(x) :B^n\setminus \partial B^m\to B^n\setminus \partial B^m
        \]
        to $\mathbb{R}^n$ by the identity. The $\hat f$ is always a homeomorphism of $\mathbb{R}^n$.
\item $\hat f$ depends continuously on $f$ with respect to the $C^0$--norm.
\item If $f$ is a $\mathcal{C}$ homeomorphism, then $\hat f$ is a $\mathcal{C}$ homeomorphism.
\item There is a bi-Lipschitz homeomorphism 
\[
\eta: B^n \cup N_{1/4}(\partial B^m) \to B^m(1/2)\times B^k,
\]
where $N_{1/4}(\partial B^m)$ denotes the closed $1/4$--neighborhood of $\partial B^m$ in $\mathbb{R}^n$, 
such that the composition 
\[
B^m(1/3)\times B^k \xrightarrow{\tilde{\iota}_0\times \id } \widetilde{Q}^m\times B^k \xrightarrow{\mathscr{U}}B^n\setminus \partial B^m \xrightarrow{\eta} B^m(1/2)\times B^k
\]
is the identity embedding. 
\end{enumerate}
\end{Proposition}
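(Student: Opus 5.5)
I would build $\mathscr{U}$ as a product of two explicit conformal or quasi-conformal identifications, and then check the four properties one at a time. Endow $\widetilde{Q}^m$ with the hyperbolic metric, so that it is isometric to $\mathbb{H}^m$, and fix the Poincaré ball model $\widetilde{Q}^m\cong \mathring{B}^m$. For $m=1$, use $\widetilde{Q}^1=\mathbb{R}\cong(-1,1)$ via a diffeomorphism that is affine on bounded sets and asymptotic to $\tanh$. The product $\mathbb{H}^m\times B^k$ with the warped metric $g_{\mathbb{H}^m}+\cosh^2(\cdot)\,|dy|^2$, or more simply the upper half-space picture $\mathbb{H}^m\times \mathbb{R}^k\subset \mathbb{H}^{n}$ viewed through the standard conformal identification $\mathbb{H}^n\setminus \mathbb{H}^m$-type coordinates, is quasi-conformally equivalent to $\mathring{B}^n$ minus the boundary sphere of an equatorial $B^m$. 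Concretely, I would define $\mathscr{U}(x,y)$ for $x\in\mathring{B}^m$, $y\in B^k$ as
\[
\mathscr{U}(x,y)=\bigl(x,\,(1-|x|)\,y\bigr)\in B^n\setminus\partial B^m ,
\]
after composing with a further diffeomorphism to make the image exactly $B^n\setminus\partial B^m$. The key computation is that the hyperbolic metric on $\mathring{B}^m$ is $4|dx|^2/(1-|x|^2)^2$. Hence scaling the $B^k$ factor by a quantity comparable to $(1-|x|)$ makes the pullback of the Euclidean metric uniformly comparable to a conformal multiple of the product metric $g_{\mathbb{H}}+|dy|^2$. Therefore $\mathscr{U}$ is quasi-conformal, and it is locally uniformly bi-Lipschitz after rescaling by the conformal factor on hyperbolic unit balls.

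For (1) and (2): since $F$ is a homotopy from $\id$ on the compact $Q^m\times B^k$, the lift $\tilde f$ is at bounded hyperbolic distance from $\id$, with a bound $D$ depending continuously on $f$. Under $\mathscr{U}$, a displacement of hyperbolic size at most $D$ at a point near $\partial B^m$ becomes a Euclidean displacement of size $O(D(1-|x|))$. So $\mathscr{U}\tilde f\mathscr{U}^{-1}$ extends by the identity continuously across $\partial B^m$. Outside $B^n$ nothing happens, and near $\partial B^n\setminus\partial B^m$ the map is already the identity because $F$ is relative to a neighborhood of $Q^m\times\partial B^k$. Continuity of $\hat f$ in $f$ follows from the same estimate: on the compact part $\{|x|\le 1-\rho\}$, $C^0$-closeness of $f$ gives closeness of $\tilde f$ on a fundamental-domain neighborhood, and therefore everywhere by deck-equivariance. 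Near $\partial B^m$, both maps are within $O(D\rho)$ of the identity.

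Property (3) is the heart of the argument. The key observation is that $\tilde f$ commutes with the deck group, which acts by hyperbolic isometries, and hence conformally on the $\mathring{B}^m$ factor. A $\mathcal{C}$ homeomorphism of the compact $Q^m\times B^k$ therefore lifts to a map whose bi-Lipschitz or quasi-conformal constants, measured in hyperbolic-unit charts, are uniform. Transporting by $\mathscr{U}$, each such chart is a Euclidean ball of radius comparable to $1-|x|$ on which $\mathscr{U}$ is, up to the scaling, uniformly bi-Lipschitz. This gives uniform local constants for $\hat f$ on $B^n\setminus\partial B^m$.

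The main obstacle is passing these uniform local constants across $\partial B^m$. In the quasi-conformal case this follows from removability: $\partial B^m$ has $\sigma$-finite $(n-1)$-measure, and hence is removable for quasi-conformal homeomorphisms. In the bi-Lipschitz case I would argue directly. For $p,q$ near $\partial B^m$, connect them by a path whose Euclidean length is comparable to $|p-q|$ and which stays at distance at least a fixed fraction of $|p-q|$ from $\partial B^m$, except near the endpoints. Then integrate the uniform local bounds along this path, using that $\hat f$ displaces points by $O(1-|x|)$, to get $|\hat f(p)-\hat f(q)|\le K|p-q|$. Apply the same argument to $\hat f^{-1}$, which is the transport of $\tilde f^{-1}$.

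Finally, (4) is a matter of choosing $\eta$. Since $\mathscr{U}\circ(\tilde\iota_0\times\id)$ is a smooth embedding of $B^m(1/3)\times B^k$ into the interior region, I would precompose $\mathscr{U}$ with a hyperbolic isometry, and choose $\tilde\iota_0$ compatibly, so that this composite is a standard affine embedding. I would then take $\eta$ to be a piecewise-smooth, hence bi-Lipschitz, homeomorphism of $B^n\cup N_{1/4}(\partial B^m)$ onto $B^m(1/2)\times B^k$ that undoes it on that subset. Such an $\eta$ can be built radially in the $B^m$ factor.
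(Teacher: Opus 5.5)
Your route differs from the paper's. The paper gives no argument of its own here except for the quasi-conformal case of (3). It takes $\mathscr{U}=u^{-1}$ from Tukia--V\"ais\"al\"a and cites their Lemmas 2.11, 2.14 and 2.12 for, respectively, quasi-conformality of $\mathscr{U}$, Statement (1), and the bi-Lipschitz case of (3). It says (2) and (4) are read off from the explicit formula for $u$. It treats the quasi-conformal case of (3) by removability of $\partial B^m$, exactly as you do.

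You instead build your own model, with fibre scale comparable to $1-|x|$, and reprove the Tukia--V\"ais\"al\"a lemmas. That analytic core is sound:
\begin{itemize}
\item Deck-equivariance of the lift gives bounded hyperbolic displacement, which becomes Euclidean displacement $O(\dist(\cdot,\partial B^m))$.
\item Uniform local constants plus continuity give a global Lipschitz bound. Straight segments already suffice here, since a segment meets $\partial B^m$ in at most two points.
\end{itemize}
This makes transparent which features of $\mathscr{U}$ are actually used. The price is that you must do by hand what the citation gets for free from a specific formula, and there your proposal has two problems.

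First, the image of $(x,y)\mapsto (x,(1-|x|)y)$ is $L\setminus\partial B^m$ with $L=\{|x|+|z|\le 1\}$. Transversally to $\partial B^m$, $L$ is a cone $\{|z|\le 1-|x|\}$ of half-angle $\pi/4$, while $B^n$ has smooth boundary there. So your ``further diffeomorphism'' onto $B^n\setminus\partial B^m$ cannot be smooth up to $\partial B^m$. Your displacement and dilatation estimates transfer only if this map is globally bi-Lipschitz and fixes $\partial B^m$ pointwise. For $p=(x,z)$, the map $p\mapsto \frac{|x|+|z|}{|p|}\,p$, smoothed away from $\partial B^m$, works. This must be specified, because the obvious choice that makes the image exactly $B^n$ (fibre radius $\sqrt{1-|x|^2}$) is not quasi-conformal.

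Second, and more seriously, your argument for (4) uses a step that cannot work when $k\ge 1$. Since $\mathscr{U}$ is a homeomorphism of manifolds with boundary, it carries $\widetilde{Q}^m\times\partial B^k$ onto $\partial B^n\setminus\partial B^m$. Hence $\mathscr{U}\circ(\tilde\iota_0\times\id)$ sends $B^m(1/3)\times\partial B^k$ into the round sphere $\partial B^n$. It is therefore never affine, whatever hyperbolic isometry or $\tilde\iota_0$ you choose.

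So $\eta$ has to be a bi-Lipschitz homeomorphism of $B^n\cup N_{1/4}(\partial B^m)$ onto $B^m(1/2)\times B^k$ whose restriction to the curved region $\mathscr{U}(\tilde\iota_0(B^m(1/3))\times B^k)$ is a prescribed non-affine map. Producing it is the real content of (4). ``Built radially in the $B^m$ factor'' does not do it: a fibre-preserving map cannot be bi-Lipschitz, because the fibres of $B^n\cup N_{1/4}(\partial B^m)$ shrink to points as $|x|\to 5/4$, while those of $B^m(1/2)\times B^k$ do not.

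You need an explicit construction here. One option is to fix $\eta$ first, smooth near $\eta^{-1}(B^m(1/3)\times B^k)$, and then modify $\mathscr{U}$ via isotopy extension by a diffeomorphism supported in a compact part of $B^n\setminus\partial B^m$ so that (4) holds. Such a modification is bi-Lipschitz on $B^n$ and leaves quasi-conformality and (1)--(3) intact.
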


\begin{remark}
    We can take the map $\mathscr{U}$ to be the inverse of the map $u$ in \cite[Section 2, page 308]{tukia1981lipschitz}. The fact that $\mathscr{U}$ is quasi-conformal follows from \cite[Lemma 2.11]{tukia1981lipschitz}. Statements 2, 4 follow directly from the definition of $u$. Statement 1 follows from \cite[Lemma 2.14]{tukia1981lipschitz}. Statement $3$ can be deduced as follows: (1) if $\mathcal{C}$ denotes locally bi-Lipschitz maps, the desired result is proved by  \cite[Lemma 2.12]{tukia1981lipschitz}; (2) if $\mathcal{C}$ denotes locally quasi-conformal maps, then since $\mathscr{U}$ is (globally) quasi-conformal, we have that $\hat f$ is a homeomorphism that is (globally) quasi-conformal on $\mathbb{R}^n\setminus \partial B^n$. The desired result then follows from the removability of singularity theorem for quasi-conformal homeomorphisms \cite[Theorem 35.1]{vaisala2006lectures}.
\end{remark}

Now we review the proof of Theorem \ref{thm_match_handle_in_C}.

\begin{repTheorem}{thm_match_handle_in_C}[\cite{sullivan1979hyperbolic}, see also {\cite[Theorem 3.3]{tukia1981lipschitz}}]
Suppose $m,k$ are non-negative integers such that $m+k=n$.
Let $\epsilon_0\in(0,1/2)$ be a constant.    
Then there exists an open neighborhood $\mathcal{P}$ of 
\[
\id \in \Emb(B^m\times B^k, B^m\times (B^k\setminus B^k(1-\epsilon_0));\mathbb{R}^n),
\]
a neighborhood $N$ of $\partial(B^m(1/2)\times B^k)$ in $B^m(1/2)\times B^k$, and a map
    \[
\psi:\mathcal{P} \to \Emb(B^m(1/2)\times B^k, N; \mathbb{R}^n),
   \]
    such that 
   \begin{enumerate}
       \item $\psi(\id) = \id$.
       \item $f = \psi(f)$ on $B^m(1/4)\times B^k$ for all $f\in \mathcal{P}$.
        \item If $f\in \mathcal{P}$ is a $\mathcal{C}$ embedding, then $\psi(f)$ is a  $\mathcal{C}$ embedding. 
    \end{enumerate}
\end{repTheorem}

\begin{proof}
    If $m=0$, we can take $\psi(f)$ to be the conjugation of $f$ by a fixed radial homeomorphism of $B^n$.
    In the following, assume $m>0$.    
    For $f\in \Emb(B^m\times B^k, B^m\times \partial B^k;\mathbb{R}^n)$ sufficiently close to the identity, we construct the desired $\psi(f)$ in the following steps.\\

    {\bf Step 1.} Recall that $\iota_0$ is a fixed smooth embedding of $B^m(1/2)$ in $Q^m$. Let $\iota_1:B^m\to Q^m$ be another fixed smooth embedding whose image is disjoint from the image of $\iota_0$. For $r\in (0,1]$, let $D(r)\subset Q^m$ be the image of $B^m(r)$ under $\iota_1$. To simplify notation, let $Q^m(r) = Q^m\setminus \mathring{D}(r)$ for $r\in(0,1]$.

    Since $Q^m\setminus \pt$ has a trivial tangent bundle, by the Hirsch--Smale theorem, there exists a smooth immersion 
    \[
    i: Q^m(1/2) \to B^m
    \]
    such that $i \circ \iota_0: B^m(1/4)\to B^m$ is the identity embedding. Fix the immersion $i$  from now.

Let
    \[
\delta_0 = \inf \{\dist_{Q^m}(x,y)\mid x\neq y \in Q^m(1/2), \textrm{ and }i(x) = i(y)\}.
\]
Since $i$ is a local embedding and $Q^m(1/2)$ is compact, we have $\delta_0>0$.  

By a straightforward argument which is now standard (see, for example, \cite{edwards1971deformations}, the definition of $h_1$ on page 68), if 
\[
f\in\Emb(B^m\times B^k, B^m\times (B^k\setminus B^k(1-\epsilon_0));\mathbb{R}^n)
\]
is sufficiently close to $\id$ in the $C^0$--norm, there exists a unique 
map $\psi_1(f): Q^m(3/4) \to Q^m(1/2)$ such that $\dist_{Q^m}(\psi_1(f)(x),x)<\delta_0/2$ for all $x$ and the following diagram commutes:
\[
\begin{tikzcd}
    Q^m(3/4)\times B^k \arrow[r, "\psi_1(f)"] \arrow[d, "i\times \id"] & Q^m(1/2)\times B^k \arrow[d, "i\times\id"] \\
    B^m\times B^k \arrow[r, "f"] & B^m\times B^k.
\end{tikzcd}
\]
Moreover, $\psi_1(f)$ is a continuous embedding and depends on $f$ continuously in the $C^0$--topology. If $f$ is a $\mathcal{C}$ embedding, then $\psi_1(f)$ is a $\mathcal{C}$ embedding. We also have $\psi_1(\id) = \id$. 

{\bf Step 2.} 
Note that $\psi_1(f) = \id$ on $Q^m(3/4)\times B^k\cap Q^m\times (B^k\setminus B^k(1-\epsilon_0))$.
Let
\begin{multline*}
    \psi_2(f):Q^m(3/4)\times B^k\cup Q^m\times(B^k\setminus B^k(1-\epsilon_0)) \\
    \to Q^m(1/2)\times B^k\cup Q^m\times (B^k\setminus B^k(1-\epsilon_0))
\end{multline*}
be the extension of $\psi_2(f)$ by the identity. Then $\psi_2(f)$ is an embedding if $f$ is sufficiently close to $\id$, and $\psi_2(f)$ is a $\mathcal{C}$ embedding if $f$ is a $\mathcal{C}$ embedding. 

{\bf Step 3.} Let  $A^n$ be the closure of $(D(1)\times B^k)\setminus (D(3/4)\times B^k(1-\epsilon_0))$, let $D^n = D(1)\times B^k$. Then $A^n\subset D^n$, and there exists a bi-Lipschitz homeomorphism that takes $D^n$ to $B^n$ such that the image of $A^n$ is $B^n([3/4,1])$. Therefore, by the canonical Schoenflies theorem (Theorem \ref{thm_canonical_schoenflies}), when $f$ is sufficiently close to $\id$, there is an embedding 
\[
\varphi(f): D(1)\times B^k \to Q^m\times B^k
\]
depending continuously on $f$, such that the extension of $\varphi(f)$ to $Q^m\times B^k$ by $\psi_2(f)$, which we denote by $\psi_3(f)$, is a homeomorphism depending continuously on $f$, and $\psi_3(f)$ is a $\mathcal{C}$ homeomorphism if $f$ is a $\mathcal{C}$ homeomorphism. Moreover, $\psi_3(f)$ equals the identity on a neighborhood of $Q^m\times \partial B^k$, where the neighborhood only depends on $\|f-\id\|_{C^0}$ and is independent of $f$.

{\bf Step 4.} Let $\mathscr{U}$, $\eta$ be as in Proposition \ref{prop_coordinate_change_Bk_times_Qm}. If $f$ is $C^0$--close to $\id$, then $\psi_3(f)$ is $C^0$--close to $\id$, so $\psi_3(f)$ can be homotoped to $\id$ by local geodesics. Let 
\[
\tilde{\psi}_3(f):\widetilde{Q}^m\times B^k\to \widetilde{Q}^m\times B^k
\]
be the lifting of $\psi(f)$ constructed by lifting the geodesic homotopy from $\id$ to $\psi_3(f)$ (see Proposition \ref{prop_coordinate_change_Bk_times_Qm} (1)), and let $\hat{\psi}_3(f):\mathbb{R}^n\to \mathbb{R}^n$ be the extension of $\mathscr{U}\circ \tilde{\psi}_3(f) \circ \mathscr{U}^{-1}$ by the identity. Recall that $\hat{\psi}_3(f)$ is a homeomorphism that equals the identity on the complement of $B^n$.  Define $\psi(f)$ to be the composition 
\[
B^m(1/2)\times B^k \xrightarrow{\eta^{-1}}B^n \cup N_{1/4}(\partial B^m)\xrightarrow{\hat{\psi}_3(f)}B^n \cup N_{1/4}(\partial B^m)\xrightarrow{\eta}B^m(1/2)\times B^k.
\]
Then there is an open neighborhood $N$ of $\partial (B^m(1/2)\times B^k)$ in $B^m(1/2)\times B^k$, which is independent of $f$, such that $\psi(f)$ equals the identity on $N$. 
Suppose $f$ is sufficiently close to $\id$ such that $f(B^m(1/4)\times B^k)\subset B^m(1/3)\times B^k$.
Then we have the following commutative diagram
\[
\begin{tikzcd}
    B^m(1/4)\times B^k \arrow[r, "\tilde{\iota}_0\times \id"] \arrow[d, "f"] & \widetilde{Q}^m\times B^k \arrow[r, "\mathscr{U}"] \arrow[d, "\tilde{\psi}_3(f)"] & B^n\setminus \partial B^m \arrow[r, "\eta"] \arrow[d, "\hat{\psi}_3(f)"] & B^m(1/2)\times B^k \arrow[d, "\psi(f)"]\\
    B^m(1/3)\times B^k\arrow[r, "\tilde{\iota}_0\times \id"] & \widetilde{Q}^m\times B^k \arrow[r, "\mathscr{U}"] & B^n\setminus \partial B^m \arrow[r, "\eta"] & B^m(1/2)\times B^k.
\end{tikzcd}
\]
By Statement (4) of Proposition \ref{prop_coordinate_change_Bk_times_Qm}, the composition of the horizontal maps equals the identity embedding, so we have $\psi(f) = f$ on $B^m(1/4)\times B^k$. It is also clear from the construction that $\psi(\id)=\id$. 
\end{proof}

\bibliographystyle{amsalpha}
\bibliography{references}

\end{document}